\theoremstyle{definition}
\newtheorem{mydef}{Definition}[section]
\newtheorem{lem}[mydef]{Lemma}
\newtheorem{thm}[mydef]{Theorem}
\newtheorem{cor}[mydef]{Corollary}
\newtheorem{question}[mydef]{Question}
\newtheorem{defin}[mydef]{Definition}
\newtheorem{example}[mydef]{Example}
\newtheorem{remark}[mydef]{Remark}
\newtheorem{fact}[mydef]{Fact}
\newcommand{\cf}[1]{\text{cf} (#1)}
\newcommand{\seq}[1]{\langle #1 \rangle}
\newcommand{\rest}{\upharpoonright}
\newcommand{\s}{\mathfrak{s}}
\newcommand{\ts}{\mathfrak{t}}
\newcommand{\id}{\text{id}}
\newcommand{\leap}[1]{\le_{#1}}
\newcommand{\ltap}[1]{<_{#1}}
\newcommand{\geap}[1]{\ge_{#1}}
\newcommand{\lta}{\ltap{\K}}
\newcommand{\lea}{\leap{\K}}
\newcommand{\gea}{\geap{\K}}
\newcommand{\K}{\mathbf{K}}
\newbox\noforkbox \newdimen\forklinewidth
\noforkbox\hbox{\lower 2pt\box1\lower
2pt\box0\relax}
\def\unionstick{\mathop{\copy\noforkbox}\limits}
\def\1nf{\unionstick^{(1)}}
\def\2nf{\unionstick^{(2)}}
\def\3nf{\unionstick^{(3)}}
\newcommand{\gtp}{\mathbf{tp}}
\newcommand{\gS}{\mathbf{S}}
\newcommand{\Ii}{\mathbb{I}}
\newcommand{\Ll}{\mathbb{L}}
\newcommand{\cl}{\operatorname{cl}}
\newcommand{\LS}{\text{LS}}
\newcommand{\goodms}[1]{\text{good}^{-#1}}
\title{On categoricity in successive cardinals}
\date{\today \\
 AMS 2010 Subject Classification: Primary 03C48. Secondary: 03C45, 03C52, 03C55, 03C75.}
\keywords{abstract elementary classes, categoricity, amalgamation, no maximal models, good frames}
\author{Sebastien Vasey}
\email{sebv@math.harvard.edu}
\urladdr{http://math.harvard.edu/\textasciitilde sebv/}
\address{Department of Mathematics \\ Harvard University \\ Cambridge, Massachusetts, USA}
\begin{document}

\begin{abstract}
  We investigate, in ZFC, the behavior of abstract elementary classes (AECs) categorical in many successive small cardinals. We prove for example that a universal $\Ll_{\omega_1, \omega}$ sentence categorical on an end segment of cardinals below $\beth_\omega$ must be categorical also everywhere above $\beth_\omega$. This is done without any additional model-theoretic hypotheses (such as amalgamation or arbitrarily large models) and generalizes to the much broader framework of tame AECs with weak amalgamation and coherent sequences.
\end{abstract}

\maketitle

\tableofcontents

\section{Introduction}

The upward Löwenheim-Skolem theorem says that any first-order theory with an infinite model has models of arbitrarily large cardinalities. This result is no longer true outside of first-order logics, for example for theories in $\Ll_{\omega_1, \omega}$. For this more general case, it is reasonable to ask which properties of small models guarantee existence of bigger models. In that light, the following early result of Shelah \cite{sh88} is remarkable:

\begin{fact}\label{categ-ext}
  An $\Ll_{\omega_1, \omega}$ sentence which is categorical in both $\aleph_0$ and $\aleph_1$ has a model of size $\aleph_2$.
\end{fact}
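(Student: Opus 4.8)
The plan is to pass from the sentence $\psi \in \Ll_{\omega_1,\omega}$ to the abstract elementary class $\K$ whose objects are the models of $\psi$ and whose ordering $\prec$ is elementary substructure with respect to a fixed countable fragment of $\Ll_{\omega_1,\omega}$ containing $\psi$. This $\K$ has $\LS(\K) = \aleph_0$ and is categorical in $\aleph_0$ and $\aleph_1$ by hypothesis, so everything can be carried out inside $\K$. The target model of size $\aleph_2$ will be produced as the union of a strictly increasing continuous $\prec$-chain $\langle M_\alpha : \alpha < \omega_2\rangle$ with each $M_\alpha$ of size at most $\aleph_1$: such a union automatically has size exactly $\aleph_2$ and lies in $\K$ by the chain axiom, and a chain (unlike an amalgam) requires no amalgamation — only that each model along the way has a proper extension. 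Thus the whole problem reduces to showing that no model of size $\le \aleph_1$ is $\prec$-maximal.

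First I would extract amalgamation and non-maximality in $\aleph_0$ from categoricity. For non-maximality in $\aleph_0$: the unique model of size $\aleph_1$ is a union of an increasing chain of countable models, so some countable model has a proper countable extension, whence by $\aleph_0$-categoricity every countable model does. For amalgamation in $\aleph_0$: were it to fail, I would code the failure into a tree $\langle M_\eta : \eta \in 2^{<\omega_1}\rangle$ of countable models and read off from distinct branches $2^{\aleph_1}$ pairwise non-isomorphic models of size $\aleph_1$ (even two would suffice), contradicting $\aleph_1$-categoricity. A similar counting argument upgrades categoricity to $\aleph_0$-stability: if there were uncountably many Galois types over the unique countable model, amalgamation would let me realize different sets of types in different models of size $\aleph_1$, again contradicting $\aleph_1$-categoricity.

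With amalgamation and stability in $\aleph_0$ in hand, Galois types over countable models are well behaved: every countable model has a universal countable extension, so iterating along a chain of length $\omega_1$ produces a saturated model of size $\aleph_1$, which by categoricity must be the unique model $N$ of size $\aleph_1$. The crux is then to show that $N$ is not $\prec$-maximal. Here I would fix a countable $M \prec N$ together with a nonalgebraic Galois type over $M$ witnessed by an element lying outside $N$, and propagate it coherently up a chain $\langle M_i' : i < \omega_1\rangle$ of countable models with $M_i \prec M_i'$, arranging at each stage — via a nonsplitting (minimal) extension of the type — that the chosen new element is never realized inside $N$. The union $N' = \bigcup_i M_i'$ then properly extends $N$, giving non-maximality at $\aleph_1$; since every model of size $\aleph_1$ is isomorphic to $N$, this is exactly what the final $\omega_2$-chain needs.

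The main obstacle is precisely this last step: keeping the new element outside $N$ throughout an $\omega_1$-length construction forces one to control the extensions of a single type across the whole chain, while amalgamation is only available at $\aleph_0$. This is where the coherence of the type extensions is essential — they must cohere and must leave the type non-realized at every countable stage — and it is the technical heart that the paper later abstracts into the hypotheses of tameness, weak amalgamation, and coherent sequences. Everything else, namely the reductions at $\aleph_0$ and the assembly of the final chain of length $\omega_2$, is comparatively routine once non-maximality at $\aleph_1$ is secured.
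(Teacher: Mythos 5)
The paper does not actually prove this statement: it is quoted as a theorem of Shelah from \cite{sh88} (see also \cite[I.3.11]{shelahaecbook}), so your attempt must be judged on its own. It has a genuine gap, and the quickest way to see that something \emph{must} be wrong is this: after forming the AEC from a countable fragment, your argument never again uses that the class is $\Ll_{\omega_1,\omega}$-axiomatizable --- every step (transferring non-maximality, the tree argument for amalgamation, Galois stability, the coherent chain of types, the final $\omega_2$-chain) is phrased purely in terms of an AEC with $\LS(\K)=\aleph_0$ that is categorical in $\aleph_0$ and $\aleph_1$. If such an argument worked, it would answer Question \ref{shelah-q} of this very paper, which is stated to be open. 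Shelah's proof uses the $\mathrm{PC}_{\aleph_0}$ presentability of $\Ll_{\omega_1,\omega}$-classes in an essential way.

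The concrete failure point is your derivation of amalgamation in $\aleph_0$. Coding a failure of amalgamation into a tree $\seq{M_\eta : \eta \in 2^{<\omega_1}}$ does produce $2^{\aleph_1}$ models of size $\aleph_1$, but concluding that even two of the branch models are non-isomorphic is exactly where the Devlin--Shelah weak diamond, hence the hypothesis $2^{\aleph_0}<2^{\aleph_1}$, enters; in ZFC (say under $2^{\aleph_0}=2^{\aleph_1}$) the count gives nothing, since all $2^{\aleph_1}$ branch models could a priori be isomorphic. The introduction of the paper flags precisely this: categoricity in $\lambda$ and $\lambda^+$ yields amalgamation in $\lambda$ only \emph{assuming} $2^\lambda<2^{\lambda^+}$, and this consistently fails for AECs (the Martin's axiom example). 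Your $\aleph_0$-stability step has the same character: the ZFC version for $\Ll_{\omega_1,\omega}$ is Keisler's theorem, which again goes through the syntactic presentation rather than through Galois types plus amalgamation. Finally, even granting amalgamation and stability in $\aleph_0$, the step you correctly identify as the crux --- keeping the new element outside $N$ along an $\omega_1$-chain, i.e.\ producing a coherent increasing sequence of nonalgebraic types whose union is a nonalgebraic type over $N$ --- is only a statement of intent: you need an extension property for nonsplitting preserving nonalgebraicity, locality at limits, and coherence of the witnesses, which is the content of Definition \ref{local-def}, Fact \ref{local-constr}, and Lemmas \ref{weak-uq}--\ref{ext-lem} of the paper and is not supplied in your sketch.
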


Shelah proved in fact a more general theorem, valid for any ``reasonably definable'' abstract elementary class (AEC) with countable Löwenheim-Skolem number (see \cite[I.3.11]{shelahaecbook} for the details). In particular, he answered in the negative Baldwin's question: can a sentence in $\Ll (\exists^{\ge \aleph_1})$ have exactly one uncountable model? See for example \cite[\S4]{grossberg2002} for a more detailed history.

It is natural to ask whether Fact \ref{categ-ext} generalizes to \emph{any} AEC. More specifically:

\begin{question}\label{shelah-q}
  Assume $\K$ is an AEC with Löwenheim-Skolem-Tarski number $\lambda$. If $\K$ is categorical in both $\lambda$ and $\lambda^+$, must it have a model of cardinality $\lambda^{++}$?
\end{question}

Question \ref{shelah-q} is still open. Partial approximations immensely stimulated the field: Shelah \cite{sh576} has shown assuming some set-theoretic hypotheses that categoricity in \emph{three} successive cardinals suffices:

\begin{fact}
  Assume\footnote{Shelah originally proved this assuming in addition a saturation condition on the weak diamond ideal, but this was subsequently removed \cite[VI.8.1(3)]{shelahaecbook}.} $2^{\lambda} < 2^{\lambda^+} < 2^{\lambda^{++}}$. Let $\K$ be an AEC with Löwenheim-Skolem-Tarski number $\lambda$. If $\K$ is categorical in $\lambda$, $\lambda^+$, and $\lambda^{++}$, then it has a model of size $\lambda^{+3}$.
\end{fact}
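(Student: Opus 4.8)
The plan is to run Shelah's theory of good $\lambda$-frames, the local (one-cardinal) axiomatization of superstability that serves as the engine for producing larger models. Recall that a good $\lambda$-frame $\s$ consists of a class of basic types $\Sbs$ over the models of $\K_\lambda$ together with an abstract nonforking relation obeying the usual independence calculus (invariance, monotonicity, extension, uniqueness, symmetry, local character, transitivity, continuity), and requires $\K_\lambda$ to have amalgamation, joint embedding, no maximal models, and stability in $\lambda$. The overall strategy is: manufacture such a frame from the categoricity hypotheses, then show it is ``successful'' enough to propagate independence — and hence amalgamation and the existence of proper extensions — into higher cardinals.

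First I would secure the structural prerequisites in $\lambda$. The decisive tool is the weak diamond ideal on $\lambda^+$ available from $2^\lambda < 2^{\lambda^+}$: the standard argument shows that a failure of amalgamation in $\K_\lambda$ would, via Shelah's presentation theorem, produce $2^{\lambda^+}$ pairwise non-isomorphic models of size $\lambda^+$, which categoricity in $\lambda^+$ forbids; hence $\K_\lambda$ has amalgamation. Categoricity in $\lambda$ and $\lambda^+$ then yields stability in $\lambda$ and a superlimit model there, and the same weak-diamond technology gives density of minimal (whence basic) types and the nonforking calculus, assembling a good $\lambda$-frame $\s$ (this is Shelah's construction, with categoricity in $\lambda^{++}$ entering as the ``few models in $\lambda^{++}$'' input that the construction consumes).

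Next I would show $\s$ is \emph{weakly successful} and $\goodp$. Weak successfulness is the density of uniqueness triples; the point is that if such triples were not dense one could, again by a weak-diamond tree construction, build $2^{\lambda^{++}}$ non-isomorphic models of size $\lambda^{++}$, contradicting categoricity in $\lambda^{++}$. This is precisely where the second hypothesis $2^{\lambda^+} < 2^{\lambda^{++}}$ is used. I expect this to be the main obstacle: verifying density of uniqueness triples, and more generally upgrading all of $\s$ to the successful $\goodp$ regime, is the delicate heart of the argument, since it requires the weak-diamond analysis of $\lambda^+$-sized approximations and tight control of type-counting in $\lambda^{++}$ — exactly what the set-theoretic hypotheses are there to enable.

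Finally, weak successfulness buys a genuine nonforking relation $\NF$ on the whole of $\K_{\ge \lambda}$ (not merely on basic types in $\lambda$), an independence relation satisfying existence and uniqueness over arbitrary models. Consequently amalgamation and the no-maximal-models property propagate upward through the cardinals above $\lambda$ — equivalently, the class $\Ks$ of saturated models is well behaved and closed under the relevant unions — so one may freely build $\lea$-increasing continuous chains and take unions. With amalgamation and no maximal models available above $\lambda$, a chain of length $\lambda^{+3}$ then produces a model of size $\lambda^{+3}$, as required; indeed the same machinery, pushed further, yields models in the higher cardinals as well.
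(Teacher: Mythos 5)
This statement is quoted in the paper as a \emph{Fact}, with a citation to Shelah (\cite{sh576} and \cite[VI.8.1(3)]{shelahaecbook}); the paper itself gives no proof, so your attempt can only be measured against Shelah's argument. Your sketch does identify the right toolbox: weak diamond from $2^\lambda < 2^{\lambda^+}$ giving amalgamation in $\lambda$, categoricity in $\lambda^{++}$ entering as the ``few models in $\lambda^{++}$'' input, and a good frame as the engine for climbing one cardinal. But there are two genuine problems. First, the frame that Shelah's construction produces from these hypotheses is a good \emph{$\lambda^+$}-frame (on the $\lambda^+$-saturated, i.e.\ by categoricity all, models of cardinality $\lambda^+$), not a good $\lambda$-frame: the density of minimal types is obtained over $\lambda$-sized models, but the extension and local character axioms are only verified over saturated bases, which pushes the frame up one cardinal. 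Once you have a good $\lambda^+$-frame, the ZFC theorem that a good $\mu$-frame yields a model of cardinality $\mu^{++}$ (Shelah II.4.13) immediately gives a model of size $(\lambda^+)^{++}=\lambda^{+3}$, and the entire detour through density of uniqueness triples, weak successfulness, and $\NF$ is unnecessary. Worse, that detour is unavailable at the level where your frame actually lives: weak successfulness of a good $\lambda^+$-frame would consume $2^{\lambda^{++}} < 2^{\lambda^{+3}}$ and a few-models hypothesis at $\lambda^{+3}$, neither of which is assumed.

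Second, your final step overclaims in a way that signals a real misunderstanding of what weak successfulness buys. The relation $\NF$ obtained from a weakly successful good $\lambda$-frame is a four-place relation on models of cardinality $\lambda$ only; it does not by itself give amalgamation or no maximal models ``through the cardinals above $\lambda$.'' Propagating upward requires forming the successor frame on the saturated models of the next cardinal and repeating, and \emph{each} step consumes a fresh instance of the weak GCH and a fresh non-structure (few models) hypothesis. If your closing paragraph were correct, the hypotheses of this Fact would already yield arbitrarily large models and categoricity transfer far beyond $\lambda^{+3}$ --- which is precisely what is \emph{not} known, and is the reason the conclusion of the Fact is only the existence of a single model one cardinal up.
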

\begin{remark}
  One of the byproduct of Shelah's proof is the machinery of \emph{good frames}, developed in Shelah's two volume book \cite{shelahaecbook}. Essentially, an AEC has a \emph{good $\lambda$-frame} if $\K_\lambda$, its class of models of cardinality $\lambda$, behaves ``well'' in the sense that it has several structural properties, including a superstable-like forking notion. Good frames have subsequently been used in many results, for example in the author's proof of the eventual categoricity conjecture for universal classes \cite{ap-universal-apal, categ-universal-2-selecta}, in the recent proof of the eventual categoricity conjecture from large cardinals \cite{multidim-v2}, and in the full analysis of the categoricity spectrum of AECs with amalgamation \cite{categ-amalg-selecta}.
\end{remark}

More ambitiously, one can ask when it is possible not only to prove the existence of a model from successive categoricity, but to prove the existence of \emph{arbitrarily large models}, or even the existence of a \emph{unique} model in all cardinalities.  Another milestone result of Shelah in that direction is \cite{sh87a, sh87b}: 

\begin{fact}\label{shelah-multidim}
  Assume $2^{\aleph_n} < 2^{\aleph_{n + 1}}$ for all $n < \omega$. If an $\Ll_{\omega_1, \omega}$ sentence is categorical in every $\aleph_n$, then it is categorical in every infinite cardinal. 
\end{fact}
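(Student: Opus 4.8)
The plan is to pass from the $\Ll_{\omega_1, \omega}$ sentence to an abstract elementary class and then route the whole argument through Shelah's theory of excellent classes. First I would invoke the standard reduction: any $\Ll_{\omega_1, \omega}$ sentence can be replaced, without changing the number of models up to isomorphism in each infinite cardinal, by the class $\K$ of atomic models of a countable first-order theory $T$ in an expanded language, ordered by elementary substructure. This $\K$ is an AEC with $\LS(\K) = \aleph_0$, and the hypothesized categoricity in every $\aleph_n$ transfers to $\K$. The goal becomes: show $\K$ is categorical in every infinite cardinal.

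Next I would exploit the cardinal arithmetic hypothesis. The assumption $2^{\aleph_n} < 2^{\aleph_{n+1}}$ is exactly what makes the weak diamond $\Phi_{\aleph_{n+1}}$ available at each level $n$ (Devlin--Shelah), and the weak diamond is the engine behind the slogan ``few models implies structure''. Concretely, if amalgamation failed in $\aleph_n$, one could use $\Phi_{\aleph_{n+1}}$ to diagonalize and construct $2^{\aleph_{n+1}}$ pairwise non-isomorphic models in $\aleph_{n+1}$, contradicting categoricity there. So I would first derive the amalgamation property in each $\aleph_n$, and alongside it stability and a workable independence (nonforking) notion on the countable part.

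The core of the argument is then to bootstrap these low-dimensional facts into full \emph{excellence}: the $(\aleph_0, n)$-existence and $(\aleph_0, n)$-uniqueness of (primary) amalgams over $n$-dimensional independent systems of countable models, for every $n < \omega$. This is an induction on $n$ in which the weak diamond at the successive level is again used to convert a failure of $n$-dimensional amalgamation into too many models in the relevant next cardinal, contradicting categoricity. I expect this to be the main obstacle: keeping the combinatorics of independent systems coherent, arranging the correct primary amalgams, and pushing the counting argument through uniformly in $n$ is the technical heart of \cite{sh87a, sh87b}, and it is where the full strength of the hypotheses is genuinely consumed.

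Finally, with excellence established, I would apply Shelah's categoricity transfer theorem for excellent classes: an excellent class categorical in $\aleph_1$ is categorical in every uncountable cardinal. The excellence structure supplies models in all cardinalities together with amalgamation everywhere and uniqueness of prime models over independent systems, which is precisely what lets one run a Morley-style Ehrenfeucht--Mostowski argument to transfer categoricity upward from $\aleph_1$. Since categoricity in $\aleph_1$ is among the hypotheses, and categoricity in $\aleph_0$ is given outright, this yields categoricity of $\K$, and hence of the original sentence, in every infinite cardinal.
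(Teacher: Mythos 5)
Your outline is essentially the proof the paper is pointing to: this statement is quoted as a known Fact with no proof given, only a citation to Shelah's \cite{sh87a, sh87b}, and the route you describe (reduction to atomic models of a countable theory, weak diamond at each $\aleph_n$ to extract amalgamation and then $(\aleph_0,n)$-existence and uniqueness over independent systems, i.e.\ excellence, followed by the categoricity transfer theorem for excellent classes) is exactly the argument of those papers. Correctly identifying the inductive derivation of excellence as the place where the hypotheses $2^{\aleph_n} < 2^{\aleph_{n+1}}$ are consumed, and the transfer from $\aleph_1$ as a ZFC consequence of excellence, matches the intended proof.
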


This has recently been generalized to AECs by Shelah and the author \cite{multidim-v2}. An example of Hart and Shelah \cite{hs-example, bk-hs} shows that one needs in general to assume categoricity at all $\aleph_n$'s to deduce categoricity further up. However, Mazari-Armida and the author \cite{mv-universal-jsl} showed that this restriction does not apply to ``simple'' AECs, and in particular to universal classes (classes of structures closed under isomorphisms, unions of chains, and substructures, see for example \cite{ap-universal-apal}). The reader may argue that there are relatively few interesting examples of universal classes, so let us work in the much broader (but slightly harder to define -- see Section \ref{prelim-sec} for the details) framework of tame AECs with intersections, encompassing multiuniversal classes \cite{abv-categ-multi-apal}, and Zilber's quasiminimal classes \cite{zil05}:

\begin{fact}\label{mv-fact}
  Assume $2^{\aleph_0} < 2^{\aleph_1}$. If $\K$ is an AEC with $\LS (\K) = \aleph_0$ which has intersections, is $\aleph_0$-tame, and is categorical in both $\aleph_0$ and $\aleph_1$, then it is categorical in all infinite cardinals.
\end{fact}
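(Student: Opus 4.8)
The plan is to show that the hypotheses let us build a good $\aleph_0$-frame on $\K$, propagate it to all cardinals using $\aleph_0$-tameness, and then invoke a tame upward categoricity transfer. First I would extract the local structure at $\aleph_0$. Categoricity in $\aleph_1$ in particular gives ``few models'' in $\aleph_1$, so Shelah's weak diamond machinery --- the same engine behind Fact \ref{categ-ext}, using the hypothesis $2^{\aleph_0} < 2^{\aleph_1}$ --- yields amalgamation in $\aleph_0$. Categoricity in $\aleph_0$ then gives stability and no maximal models in $\aleph_0$. This is the only place the set-theoretic hypothesis is used; everything afterward should be in ZFC.

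Next I would construct a good $\aleph_0$-frame. Here the intersection property is decisive: it equips every $M \in \K$ with a canonical closure operator $A \mapsto \cl^M(A)$ sending a set to the smallest $\K$-substructure containing it, which lets one define a nonforking relation algebraically, in the spirit of universal classes. I would then verify the good-frame axioms one at a time --- existence and extension of nonforking, uniqueness, symmetry, local character, and continuity --- drawing the stability and cardinal-arithmetic bounds from categoricity and the finitary behavior of independence from the closure operator. The point is that intersections make nonforking sufficiently finitary that local character and symmetry can be checked directly, which is what should allow the frame to be built from categoricity in only \emph{two} successive cardinals rather than the three needed in Shelah's general construction.

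With the good $\aleph_0$-frame in hand, I would propagate it upward. Since $\K$ is $\aleph_0$-tame, the theory of tame good frames (Boney, with refinements by the author) extends the good $\aleph_0$-frame to a good frame on all of $\K_{\geq \aleph_0}$; in particular amalgamation, stability, and superstability (uniqueness of limit models, with unions of saturated chains again saturated) propagate to every $\lambda \geq \aleph_0$. I note that \emph{deriving} amalgamation above $\aleph_0$ rather than assuming it is itself part of what the tame-frame theory must accomplish here, and requires care. Finally, having established amalgamation and no maximal models in all cardinals, $\aleph_0$-tameness, and categoricity in the successor cardinal $\aleph_1 = \LS(\K)^+$, I would apply the Grossberg--VanDieren upward categoricity transfer to conclude categoricity in every $\mu \geq \aleph_1$. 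Combined with the given categoricity in $\aleph_0$, this gives categoricity in all infinite cardinals.

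I expect the main obstacle to be the construction of the good $\aleph_0$-frame: verifying the symmetry and local-character axioms of nonforking is where the real content lies, and it is precisely here that the intersection property must be used in an essential way. This is not a technicality but a genuine feature of the problem --- the Hart--Shelah examples show that without such a hypothesis the conclusion fails, since in general one must assume categoricity in \emph{all} the $\aleph_n$ to transfer it further up. Thus any successful proof must locate exactly where intersections (or a comparable ``simplicity'' assumption) break the Hart--Shelah obstruction, and the frame construction is the natural place for this to happen.
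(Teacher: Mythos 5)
Your overall architecture (weak diamond $\Rightarrow$ amalgamation in $\aleph_0$; build a good $\aleph_0$-frame; transfer it up by tameness; finish with Grossberg--VanDieren) is the right general shape, and the first and last steps are sound. But the central step --- constructing a good $\aleph_0$-frame from categoricity in just $\aleph_0$ and $\aleph_1$ together with amalgamation in $\aleph_0$, tameness, and intersections --- is a genuine gap, not a technicality you can defer. Every known construction of a good frame from successive categoricity needs one of: (a) categoricity/few models in a \emph{third} cardinal plus weak GCH at two levels (Shelah's $\lambda^{++}$ hypothesis); (b) arbitrarily large models already in hand, so that superstability can be derived \`a la Shelah--Villaveces (this is exactly why Lemma \ref{good-frame-categ} in this paper carries ``arbitrarily large models'' as a hypothesis, and why Theorem \ref{arb-large-thm} needs categoricity on a whole interval below a strong limit); or (c) enough definability of the class to produce large models first. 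Your plan has none of these: you intend to get arbitrarily large models \emph{as a consequence} of the frame transfer, but you need them (or a substitute) to verify local character and extension in the first place. The appeal to ``intersections make nonforking finitary'' does not discharge this; the closure operator gives weak amalgamation and coherent sequences (Remark \ref{tp-intersec}, Lemma \ref{coherent-lem}), but no route to local character or to models of size $\aleph_2$. Indeed, whether categoricity in $\aleph_0$ and $\aleph_1$ yields even one model of size $\aleph_2$ for a general AEC is the open Question \ref{shelah-q}; some extra leverage is mandatory.

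The paper's actual proof supplies that leverage through definability, which your proposal never touches: by \cite[3.14]{logic-intersection-bpas}, an AEC with intersections and $\LS(\K)=\aleph_0$ has \emph{analytic} class of countable models, and then \cite[4.4]{mv-universal-jsl} applies. Analyticity is what lets the Shelah-style $\Ll_{\omega_1,\omega}$ machinery (the engine behind Fact \ref{categ-ext}) manufacture models in $\aleph_2$ and beyond, breaking the circularity described above; after that, the frame-building and upward-transfer steps you describe can proceed. So the role of the intersection hypothesis is to yield definability of the class, not to make an algebraically defined independence relation satisfy symmetry and local character. To repair your argument you would need to either invoke the analyticity bridge or find an independent proof that $\K$ has arbitrarily large models before attempting the frame construction.
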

\begin{proof}
  By \cite[3.14]{logic-intersection-bpas}, $\K_{\le \aleph_0}$ is analytic, so the result follows from \cite[4.4]{mv-universal-jsl}.
\end{proof}

In the present paper, we aim to prove results along the ones above but \emph{in ZFC}. The broad idea is to develop a new, local, model theory that relies only on very weak consequences of the compactness theorem. In essence, using cardinal arithmetic makes it ``too easy'' by allowing set-theoretic tools (such as the weak diamond) to be used instead of model theory. While removing a minor-looking assumption such as ``$2^{\aleph_0} < 2^{\aleph_1}$'' may not seem very impressive, it is the author's thesis that in fact proving such results in ZFC is much harder and yields to interesting new mathematics.

One specific challenge is that it is harder to obtain amalgamation when one does not assume set-theoretic hypotheses: Shelah \cite[I.3.8]{shelahaecbook} has shown that categoricity in $\lambda$ and $\lambda^+$ implies amalgamation in $\lambda$ \emph{assuming} that $2^\lambda < 2^{\lambda^+}$. However, the set-theoretic hypothesis cannot in general be removed:

\begin{example}[{\cite[\S I.6]{shelahaecbook}}]
  Assuming Martin's axiom, there is an AEC (axiomatizable in $\Ll (\exists^{\ge \aleph_1})$) that is categorical in every cardinal in $[\aleph_0, 2^{\aleph_0})$, fails amalgamation everywhere below $2^{\aleph_0}$, and has no model of cardinality $\left(2^{\aleph_0}\right)^+$.
\end{example}

This example leads to the following weakening of Question \ref{shelah-q}, which is also open: 

\begin{question}\label{test-q-1}
  Assume $\K$ is an AEC with Löwenheim-Skolem-Tarski number $\lambda$, categorical in every cardinal in $[\lambda, 2^{\lambda}]$. Must $\K$ have a model of cardinality $\left(2^\lambda\right)^+$?
\end{question}

Similarly, looking at Fact \ref{shelah-multidim} suggests that to replace $\aleph_\omega$ by $\beth_\omega$ may be interesting:

\begin{question}\label{test-q-2}
  Assume $\K$ is an AEC with Löwenheim-Skolem-Tarski number $\lambda$, categorical in every cardinal in $[\lambda, \beth_\omega (\lambda))$. Must $\K$ be categorical everywhere above $\lambda$?
\end{question}

The present paper makes the following contributions:

\begin{enumerate}
\item By a very short proof, putting together several results of Shelah, we observe (Corollary \ref{basic-existence-cor}) that if $\mu > \LS (\K)$ is \emph{limit} and $\K$ is categorical everywhere in $[\LS (\K), \mu]$, then $\K$ has a model of cardinality $\mu^+$. This is a very partial approximation to Questions \ref{test-q-1} and \ref{test-q-2}, but perhaps it can awaken interest in these general cases again.
\item We give a positive answer to Question \ref{test-q-2} in the special case where $\K$ is a tame AEC with intersections (hence in particular when it is a universal class, see Corollary \ref{main-cor-2}). This gives a ZFC version of the Facts presented above, and partially answers \cite[Question 4.3]{mv-universal-jsl} (where we worked near $\aleph_1$ and assumed the weak continuum hypothesis, see Fact \ref{mv-fact}). We more generally answer Question \ref{test-q-2} in the broader framework of tame AECs with weak amalgamation and coherent sequences (see Section \ref{prelim-sec} for the definitions): \\

  \textbf{Corollary \ref{main-cor}.} Assume $\K$ is a $\LS (\K)$-tame AEC with weak amalgamation and coherent sequences. If $\K$ is categorical everywhere in $[\LS (\K), \beth_\omega (\LS (\K)))$, then $\K$ is categorical everywhere above $\LS (\K)$. \\
    
    Interestingly, even in the case of tame AECs with (full) amalgamation, the result is new and not trivial: it was only known \cite{tamenesstwo, tamenessthree} in case the AEC also has arbitrarily large models. 
\end{enumerate}

The proof of Corollary \ref{main-cor} proceeds as follows: we use tameness and the weak amount of amalgamation to build a good $\beth_\omega (\lambda)$-frame (we have set $\lambda := \LS (\K)$). The very rough idea is to follow the construction in \cite{ss-tame-jsl} (where amalgamation and arbitrarily large models were assumed), but there are many nontrivial difficulties because of the lack of amalgamation. We develop new, more local tools to get around this. One key is a local character theorem (Lemma \ref{splitting-lc}), generalizing \cite[4.12]{stab-spec-jml} which was instrumental in studying the stability spectrum for tame AECs. An important problem is how to build ``free'' (nonsplitting) extensions of types. The notion of a \emph{nicely fitrable model} (Definition \ref{nice-def}) is a new definition for a ``good'' saturated models in this amalgamationless context. In a sense, nicely filtrable models form the bases over which types behave well. They have independent interest and may play a role in future investigations. 

Once the good frame is built, a known upward transfer of good frames in tame AECs with weak amalgamation (see \cite{ext-frame-jml} and \cite[4.16]{ap-universal-apal}) is used to prove that the AEC has arbitrarily large models and eventual amalgamation. After some more work, it then becomes possible to use the result of Grossberg and VanDieren \cite{tamenessthree} showing that in tame AECs with amalgamation and arbitrarily large models, categoricity in two successive cardinals implies categoricity above those cardinals.

We assume the reader has some basic knowledge of AECs (see e.g.\ \cite{shelahaecbook, baldwinbook09, grossberg2002}), although we briefly repeat the main definitions in the preliminaries. In the last section, we will also assume some familiarity with the material in \cite{ss-tame-jsl} regarding good frames. Other results we use can be regarded as black boxes.

The author would like to thank John T.\ Baldwin for some interesting discussions (while on a research visit at UIC) that led to Section \ref{uic-sec} of the present paper. The author also thanks Marcos Mazari-Armida and a referee for helpful feedback on  this paper.

\section{Preliminaries}\label{prelim-sec}

\subsection{Abstract elementary classes}
Given a structure $M$, write $|M|$ for its universe and $\|M\|$ for the cardinality of its universe. We usually do not distinguish between $M$ and $|M|$, writing e.g.\ $a \in M$ instead of $a \in |M|$ and $A \subseteq M$ instead of $A \subseteq |M|$. An \emph{abstract class} is a pair $\K = (K, \lea)$, where $K$ is a class of structures in a fixed (here will all arities finite) vocabulary $\tau = \tau (\K)$ and $\lea$ is a partial order, $M \lea N$ implies that $M$ is a $\tau$-substructure of $N$, and both $K$ and $\lea$ respect isomorphisms (the definition is due to Grossberg). We often do not distinguish between $K$ (the class of structures) and $\K$ (the \emph{ordered} class of structures). Any abstract class admits a notion of \emph{$\K$-embedding}: these are the functions $f: M \rightarrow N$ such that $f: M \cong f[M]$ and $f[M] \lea N$. Thus one can naturally see $\K$ as a category. Unless explicitly stated, any map $f: M \rightarrow N$ in this paper will be a $\K$-embedding. We write $f: M \xrightarrow[A]{} N$ to mean that $f$ is a $\K$-embedding from $M$ into $N$ which fixes the set $A$ pointwise (so $A \subseteq M$). We similarly write $f: M \cong_A N$ for isomorphisms from $M$ onto $N$ fixing $A$.

For $\lambda$ a cardinal, we will write $\K_\lambda$ for the restriction of $\K$ to models of cardinality $\lambda$. Similarly define $\K_{\ge \lambda}$, $\K_{<\lambda}$, or more generally $\K_{\Theta}$, where $\Theta$ is a class of cardinals. 

For an abstract class $\K$, we denote by $\Ii (\K)$ the number of models in $\K$ up to isomorphism (i.e.\ the cardinality of $\K /_{\cong}$). We write $\Ii (\K, \lambda)$ instead of $\Ii (\K_\lambda)$. When $\Ii (\K) = 1$, we say that $\K$ is \emph{categorical}. We say that $\K$ is \emph{categorical in $\lambda$} if $\K_\lambda$ is categorical, i.e.\ $\Ii (\K, \lambda) = 1$.

We say that $\K$ has \emph{amalgamation} if for any $M_0 \lea M_\ell$, $\ell = 1,2$, there is $M_3 \in \K$ and $\K$-embeddings $f_\ell : M_\ell \xrightarrow[M_0]{} M_3$, $\ell = 1,2$. $\K$ has \emph{joint embedding} if any two models can be $\K$-embedded in a common model. $\K$ has \emph{no maximal models} if for any $M \in \K$ there exists $N \in \K$ with $M \lea N$ and $M \neq N$ (we write $M \lta N$). Localized concepts such as \emph{amalgamation in $\lambda$} mean that $\K_\lambda$ has amalgamation.

The definition of an abstract elementary class is due to Shelah \cite{sh88}:

\begin{defin}\label{aec-def}
  An \emph{abstract elementary class (AEC)} is an abstract class $\K$ satisfying:

  \begin{enumerate}
  \item Coherence: if $M_0, M_1, M_2 \in \K$, $M_0 \subseteq M_1 \lea M_2$ and $M_0 \lea M_2$, then $M_0 \lea M_1$.
  \item Tarski-Vaught chain axioms: if $\seq{M_i : i \in I}$ is a $\lea$-directed system and $M := \bigcup_{i \in I} M_i$, then:
    \begin{enumerate}
    \item $M \in \K$.
    \item $M_i \lea M$ for all $i \in I$.
    \item If $N \in \K$ is such that $M_i \lea N$ for all $i \in I$, then $M \lea N$.
    \end{enumerate}
  \item Löwenheim-Skolem-Tarski axiom: there exists a cardinal $\lambda \ge |\tau (\K)| + \aleph_0$ such that for any $N \in \K$ and any $A \subseteq N$, there exists $M \in \K$ with $M \lea N$, $A \subseteq M$, and $\|M\| \le |A| + \lambda$. We write $\LS (\K)$ for the least such $\lambda$.
  \end{enumerate}
\end{defin}

\subsection{Types}

In any abstract class $\K$, we can define a semantic notion of type (the definition was first given by Shelah in \cite{sh300-orig}). We give the full definition here for convenience, but the reader can also check \cite[2.16]{sv-infinitary-stability-afml} for more details. First, define a relation $E_{at}$ (atomic equivalence) on the class of triples $(b, A, N)$ with $N \in \K$, $A \subseteq N$, and $b \in N$ as follows: $(b_1, A, N_1) E_{at} (b_2, B, N_2)$ if $A = B$ and there exists $N \in \K$ and $\K$-embeddings $f: N_1 \xrightarrow[A]{} N$, $g: N_2 \xrightarrow[A]{} N$ so that $f (b_1) = g (b_2)$. Note that $E_{at}$ is a symmetric and reflexive relation. Let $E$ denote its transitive closure (if $\K$ has amalgamation, it is easy to check that $E_{at}$ is in fact already transitive, so $E = E_{at}$ in this case). For $N \in \K$, $A \subseteq N$, and $b \in N$, we define the \emph{type of $b$ over $A$ in $N$}, written $\gtp_{\K} (b / A; N)$, to be the $E$-equivalence class of the triple $(b, A, N)$. Usually $\K$ will be clear from context and we will omit it from the notation. These semantic types are called Galois (or orbital) types in the literature, but they coincide with the first-order syntactic types in elementary classes (and we will never use syntactic types anyway) so we simply call them ``types''. For $M \in \K$, we write $\gS_{\K} (M) = \gS (M)$ for $\{\gtp (b / M; N) \mid M \lea N\}$, the class\footnote{If $\K$ is an AEC, $\gS (M)$ will of course be a set.} of all types over $M$. Also define, for $M \lea N$, $\gS (M; N) = \{\gtp (b / M; N) \mid b \in N\}$, the class of all types over $M$ realized inside $N$. We define in the expected way what it means for a type to extend another type or to take the image of a type by a $\K$-embedding. We call a type $p$ \emph{algebraic} if it can be written as $p = \gtp (a / M; N)$, with $a \in M$.

As mentioned before, when $\K$ is an elementary class, $\gtp (b / A; N)$ contains the same information as the usual notion of $\Ll_{\omega, \omega}$-syntactic type. In particular, types in an elementary class are determined by their restrictions to finite sets. In \cite{tamenessone}, this fact was built into the following definition: for $\chi$ an infinite cardinal, an abstract class $\K$ is \emph{$(<\chi)$-tame} if for any $M \in \K$ and any distinct $p, q \in \gS (M)$, there exists $A \subseteq M$ such that $|A| < \chi$ and $p \rest A \neq q \rest A$. We say that $\K$ is \emph{$\chi$-tame} if it is $(<\chi^+)$-tame. Thus elementary classes are $(<\aleph_0)$-tame, but there are examples of non-tame AECs, see e.g.\ \cite[3.2.2]{bv-survey-bfo}.

\subsection{Weak amalgamation, intersections, and coherent sequences of types}

Weak amalgamation was first introduced in \cite[4.11]{ap-universal-apal}. It can be seen as a common weakening of amalgamation and having certain kinds of prime models.

\begin{defin}\label{weak-ap-def}
  Let $\K$ be an abstract class and let $M \in \K$. We say that $M$ is a \emph{weak amalgamation base} (in $\K$) if for any $N_1, N_2 \in \K$ with $M \lea N_1$, $M \lea N_2$, and any $a_1 \in N_1$, $a_2 \in N_2$, \emph{if} $\gtp (a_1 / M; N_1) = \gtp (a_2 / M; N_2)$, \emph{then} there exists $N_1^0, N_2', f$ such that:

  \begin{enumerate}
  \item $M \lea N_1^0 \lea N_1$.
  \item $a_1 \in N_1^0$.
  \item $N_2 \lea N_2'$.
  \item $f: N_1^0 \xrightarrow[M]{} N_2'$.
  \item $f (a_1) = a_2$.
  \end{enumerate}

  We say that $\K$ has \emph{weak amalgamation} if every object of $\K$ is a weak amalgamation base.
\end{defin}

Note that amalgamation implies weak amalgamation (see Fact \ref{type-ext-weak-ap} below). Another example of abstract classes with weak amalgamation are those that have intersections:

\begin{defin}
  An abstract class $\K$ \emph{has intersections} if for any $N \in \K$ and any $A \subseteq N$, the set $\bigcap \{M \in \K \mid M \lea N, A \subseteq M\}$ induces a $\K$-substructure of $N$. We write $\cl^N (A)$ for this substructure.
\end{defin}

\begin{remark}\label{tp-intersec}
  By \cite[1.3]{non-locality} or \cite[2.18]{ap-universal-apal}, in an abstract class with intersections, $\gtp (a_1 / M; N_1) = \gtp (a_2 / M; N_2)$ if and only if there exists an isomorphism $f: \cl^{N_1} (a_1 M) \cong_M \cl^{N_2} (a_2 M)$ such that $f (a_1) = a_2$. In particular, abstract classes with intersections have weak amalgamation.
\end{remark}

The following characterizes when weak amalgamation implies amalgamation.

\begin{fact}[{\cite[4.14]{ap-universal-apal}}]\label{type-ext-weak-ap}
  Let $\K$ be an AEC and let $\lambda \ge \LS (\K)$. The following are equivalent:

  \begin{enumerate}
  \item $\K_\lambda$ has weak amalgamation and for any $M \lea N$ both in $\K_\lambda$, any $p \in \gS (M)$ can be extended to a type in $\gS (N)$.
  \item $\K_\lambda$ has amalgamation.
  \end{enumerate}
\end{fact}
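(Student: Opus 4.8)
The plan is to prove the two implications separately, with essentially all the substance lying in $(1) \Rightarrow (2)$.

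For $(2) \Rightarrow (1)$, assume $\K_\lambda$ has amalgamation. As observed just after the definition of types, amalgamation makes $\Eat$ transitive, so $E = \Eat$ and equality of types is witnessed by a single commuting square. To get weak amalgamation, suppose $M \lea N_1$, $M \lea N_2$ in $\K_\lambda$ with $\gtp(a_1/M; N_1) = \gtp(a_2/M; N_2)$; since $E = \Eat$ there are $N \in \K_\lambda$ and $g_\ell : N_\ell \xrightarrow[M]{} N$ with $g_1(a_1) = g_2(a_2)$, and after renaming so that $g_2 = \mathrm{id}$ (hence $N_2 \lea N$) we may take $N_1^0 := N_1$, $N_2' := N$, and $f := g_1$ in Definition \ref{weak-ap-def}. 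For the type extension property, given $M \lea N$ in $\K_\lambda$ and $p = \gtp(b/M; M') \in \gS(M)$, use \LS\ to arrange $M' \in \K_\lambda$, amalgamate $M'$ and $N$ over $M$ into some $N' \in \K_\lambda$ with $N \lea N'$ and an embedding $h : M' \xrightarrow[M]{} N'$, and read off $\gtp(h(b)/N; N') \in \gS(N)$, whose restriction to $M$ is $p$.

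For $(1) \Rightarrow (2)$, fix $M_0 \lea M_1$ and $M_0 \lea M_2$ in $\K_\lambda$; it suffices to find $M_2 \lea M_3$ and $f : M_1 \xrightarrow[M_0]{} M_3$. Enumerate $M_1 = \{c_j : j < \lambda\}$ and build by transfinite recursion on $i \le \lambda$ an increasing continuous chain $\langle N_i : i \le \lambda\rangle$ with $N_0 = M_0$ and each $N_i \lea M_1$ in $\K_\lambda$, an increasing continuous chain $\langle P_i : i \le \lambda\rangle$ with $P_0 = M_2$ and each $P_i \in \K_\lambda$, and coherent (i.e.\ increasing) $\K$-embeddings $f_i : N_i \xrightarrow[M_0]{} P_i$, arranging $c_i \in N_{i+1}$ at each step. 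At limits one takes unions, the resulting maps again being $\K$-embeddings by the chain axioms; at the end $f := \bigcup_{i < \lambda} f_i : M_1 \xrightarrow[M_0]{} P_\lambda$ works with $M_3 := P_\lambda$, since the bookkeeping forces $\bigcup_{i<\lambda} N_i = M_1$.

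The successor step is where hypothesis $(1)$ enters, and is the main obstacle. Given $f_i : N_i \xrightarrow[M_0]{} P_i$, choose the least $c := c_j \notin N_i$ and, by \LS\ inside $M_1$, some $N \in \K_\lambda$ with $N_i \cup \{c\} \subseteq N \lea M_1$. Push $p := \gtp(c/N_i; N)$ forward along $f_i$ and, using type extension for $f_i[N_i] \lea P_i$, realize the resulting type over $P_i$ by some $d$ in an extension $P_i \lea Q$ with $Q \in \K_\lambda$. Over the base $B := f_i[N_i]$, both $d$ and a transported copy of $c$ realize the same type, so applying weak amalgamation of $B$ (Definition \ref{weak-ap-def}) returns a submodel containing the transported $c$, an extension $Q \lea Q'$, and an embedding gluing $c$ to $d$; pulling back through $f_i$ gives $N_i \lea N_{i+1} \lea M_1$ with $c \in N_{i+1}$ and $f_{i+1} \supseteq f_i$, $f_{i+1} : N_{i+1} \xrightarrow[M_0]{} Q'$ with $f_{i+1}(c) = d$, after which \LS\ cuts $Q'$ down to $P_{i+1} \in \K_\lambda$ containing $Q$ and the range of $f_{i+1}$. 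The crux — and the reason weak amalgamation suffices in place of full amalgamation — is that weak amalgamation does \emph{not} return an embedding of all of $N$, but only of some submodel $N_1^0$ containing the new point $c$; one therefore cannot absorb a prescribed one-element extension at each stage. The bookkeeping circumvents this: we only need each designated $c_j$ to enter the domain by stage $j+1$, and the continuous union then recovers all of $M_1$. Checking coherence of the $f_i$ and that every cardinality stays at $\lambda$ (via repeated \LS) is routine, but must be tracked with care.
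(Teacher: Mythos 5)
Your proof is correct, and it follows essentially the argument behind the cited result \cite[4.14]{ap-universal-apal} (the paper itself only quotes this as a Fact without proof): the easy direction uses that amalgamation collapses $E$ to $E_{at}$, and the hard direction is exactly the one-element-at-a-time chain construction in which type extension supplies a target realization over $P_i$ and weak amalgamation glues it to the transported point, with the continuity of the chain absorbing the fact that weak amalgamation only returns a submodel containing the distinguished element. Your identification of that last point as the crux is accurate, and the bookkeeping you describe does close the gap.
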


The definitions below are well known in AECs with amalgamation, see \cite[\S 11]{baldwinbook09}.

\begin{defin}\label{local-def}
  Let $\K$ be an abstract class, $\bar{M} = \seq{M_i : i < \alpha}$ be increasing continuous, and let $\bar{p} = \seq{p_i : i < \alpha}$ be an increasing chain of types with $p_i \in \gS (M_i)$ for all $i < \alpha$.

  \begin{enumerate}
  \item We say that $\bar{p}$ is \emph{local} if for any limit $i < \alpha$ and any $q \in \gS (M_i)$, if $q \rest M_j = p_j$ for all $j < i$, then $q = p_i$. We say that $\bar{M}$ is \emph{local} if any increasing chain of types $\bar{p}$ as above is local. 
  \item We say that $\bar{p}$ is \emph{coherent} if there exists $\seq{N_i : i < \alpha}$ increasing continuous, $\seq{a_i : i < \alpha}$, and $\seq{f_{ij} : i \le j < \alpha}$ such that for all $i \le j \le k < \alpha$:

    \begin{enumerate}
    \item $M_i \lea N_i$.
    \item $a_i \in N_i$.
    \item $p_i = \gtp (a_i / M_i; N_i)$.
    \item $f_{ij} : N_i \xrightarrow[M_i]{} N_{j}$.
    \item $f_{ii} = \id_{N_i}$.
    \item $f_{jk} \circ f_{ij} = f_{ik}$.
    \item $f_{ij} (a_i) = a_{j}$.
    \end{enumerate}

    We say that $\bar{M}$ is \emph{coherent} if any local $\bar{p}$ as above is coherent. Finally, we say that $\K$ \emph{has coherent sequences} if any $\bar{M}$ as above is coherent.
  \end{enumerate}
\end{defin}

The following is immediate from the definitions (take a directed colimit). See \cite[11.5]{baldwinbook09}.

\begin{fact}\label{local-constr}
  Let $\K$ be an AEC. Let $\delta$ be a limit ordinal, let $\bar{M} = \seq{M_i : i < \delta}$ be increasing continuous in $\K$, and let $\bar{p} = \seq{p_i : i < \delta}$ be an increasing chain of types with $p_i \in \gS (M_i)$ for all $i < \delta$. If $\bar{p}$ is coherent, then there exists $q \in \gS (\bigcup_{i < \delta} M_i)$ so that $q \rest M_i = q_i$ for all $i < \delta$.
\end{fact}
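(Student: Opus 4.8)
The plan is to realize $q$ as the Galois type of a single element in the directed colimit of the system witnessing coherence. First I would unwind the hypothesis: since $\bar{p}$ is coherent, fix witnesses $\seq{N_i : i < \delta}$, $\seq{a_i : i < \delta}$, and $\seq{f_{ij} : i \le j < \delta}$ as in Definition \ref{local-def}(2). Thus each $M_i \lea N_i$, $a_i \in N_i$, $p_i = \gtp(a_i / M_i; N_i)$, each $f_{ij}$ is a $\K$-embedding fixing $M_i$ pointwise, the $f_{ij}$ cohere (so $\seq{N_i, f_{ij}}$ is a directed system of $\K$-embeddings indexed by $\delta$), and $f_{ij}(a_i) = a_j$.

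Next I would take the directed colimit of $\seq{N_i, f_{ij}}$. By the chain axioms, AECs are closed under directed colimits of $\K$-embeddings, so there is $N \in \K$ together with $\K$-embeddings $g_i : N_i \to N$ satisfying $g_j \circ f_{ij} = g_i$ for all $i \le j$. I would then read off two compatibility facts directly from the coherence data. Since each $f_{ij}$ fixes $M_i$, for $i \le j$ we have $g_i \rest M_i = (g_j \circ f_{ij}) \rest M_i = g_j \rest M_i$; hence the maps $g_i \rest M_i$ glue to a single map $g : M \to N$ with $g \rest M_i = g_i \rest M_i$, where $M := \bigcup_{i < \delta} M_i$. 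Since $f_{ij}(a_i) = a_j$, the element $a := g_i(a_i)$ is independent of $i$, because $g_i(a_i) = g_j(f_{ij}(a_i)) = g_j(a_j)$.

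I would then check that $g$ is a $\K$-embedding and define $q$ to be the pullback along $g$ of $\gtp(a / g[M]; N)$. That $g$ is a $\K$-embedding again follows from the chain axioms: the sets $g[M_i] = g_i[M_i]$ form an increasing $\lea$-chain in $N$ with union $g[M]$, so $g[M] \lea N$ and $g : M \cong g[M]$. Finally, fix $i < \delta$ and restrict. Using $g \rest M_i = g_i \rest M_i$ and $g[M_i] = g_i[M_i]$, we get $q \rest M_i = (g_i \rest M_i)^{-1}\left(\gtp(a / g_i[M_i]; N)\right)$; but $a = g_i(a_i)$ and $g_i$ sends $\gtp(a_i / M_i; N_i) = p_i$ to $\gtp(a / g_i[M_i]; N)$, so pulling back recovers exactly $p_i$. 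Hence $q \rest M_i = p_i$ for all $i < \delta$.

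The argument is essentially bookkeeping once coherence has been unwound, matching the remark that the statement is immediate from the definitions. The only points needing care — and the closest thing to an obstacle — are verifying that the glued map $g$ is well defined and a $\K$-embedding (this is exactly where the cocycle identity $f_{jk} \circ f_{ij} = f_{ik}$ and the fixing of $M_i$ enter) and the observation that $a$ does not depend on $i$; both are immediate from conditions (d)--(g).
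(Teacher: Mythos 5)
Your argument is correct and is exactly the route the paper intends: the paper gives no written proof beyond the remark that the fact ``is immediate from the definitions (take a directed colimit),'' and your directed-colimit construction, gluing of the maps $g_i \rest M_i$, and pullback of $\gtp(a / g[M]; N)$ is precisely that argument carried out in detail. No gaps.
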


We finish by proving some easy results about building coherent sequences of types: it can be done assuming amalgamation or assuming the AEC has intersections. 

\begin{lem}\label{coherent-lem}
  Let $\K$ be an AEC. Let $\delta$ be a limit ordinal, let $\bar{M} = \seq{M_i : i < \delta}$ be increasing continuous in $\K$, and let $\bar{p} = \seq{p_i : i < \delta}$ be an increasing chain of types with $p_i \in \gS (M_i)$ for all $i < \delta$. Assume that $\bar{p}$ is local.

  \begin{enumerate}
  \item If $\K_{<\sup_{i < \delta}(\|M_i\| + \LS (\K))^+}$ has amalgamation, then $\bar{p}$ is coherent.
  \item If $\K$ has intersections, then $\bar{p}$ is coherent.
  \end{enumerate}

  In particular, AECs with amalgamation and AECs with intersections both have coherent sequences. 
\end{lem}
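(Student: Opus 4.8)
The plan is to prove (1) and (2) simultaneously, by induction on $i<\delta$, constructing a witnessing system $\seq{N_j : j \le i}$, $\seq{a_j : j \le i}$, $\seq{f_{jk} : j \le k \le i}$ for clauses (a)--(g) of Definition \ref{local-def}, reading ``increasing continuous'' through the $f_{jk}$: at a limit stage the model is the directed colimit of the earlier ones along the transition maps (this is the reading that makes the ``directed colimit'' argument behind Fact \ref{local-constr} go through, and it avoids any need to nest the $N_j$ literally). For part (2) I will additionally preserve the invariant $N_j = \cl^{N_j}(a_j M_j)$. The base stage is trivial: I realize $p_0$ (inside $\cl^{N_0}(a_0 M_0)$ in case (2)) and put $f_{00} = \id$.

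For the successor step I would pick a realization $p_{i+1} = \gtp(b / M_{i+1}; N'')$ and note that, after restriction, $\gtp(b / M_i; N'') = p_i = \gtp(a_i / M_i; N_i)$. In part (1), amalgamation converts this equality of types into a model $N^\ast$ with $\K$-embeddings $u : N_i \to N^\ast$, $v : N'' \to N^\ast$ fixing $M_i$ and satisfying $u(a_i) = v(b)$; renaming $N^\ast$ so that $v$ is an inclusion gives $M_{i+1} \lea N'' \lea N_{i+1}$, and I set $a_{i+1} := b$ and let $f_{i,i+1}$ be the image of $u$. In part (2) I take $N'' = \cl^{N''}(b M_{i+1})$; Remark \ref{tp-intersec} then upgrades the type equality to an isomorphism $\cl^{N_i}(a_i M_i) \cong_{M_i} \cl^{N''}(b M_i)$ carrying $a_i$ to $b$, which, since $N_i = \cl^{N_i}(a_i M_i)$, is exactly a map $f_{i,i+1} : N_i \xrightarrow[M_i]{} N''$ with $f_{i,i+1}(a_i) = b$; I set $N_{i+1} := N''$, $a_{i+1} := b$. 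Either way $M_{i+1} \lea N_{i+1}$, $\gtp(a_{i+1}/M_{i+1}; N_{i+1}) = p_{i+1}$, the invariant survives in case (2), and I extend the transition maps by $f_{j,i+1} := f_{i,i+1} \circ f_{ji}$.

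The limit step is the heart of the matter. For $i < \delta$ limit I form the directed colimit $N_i$ of $\seq{N_j : j<i}$ along the $f_{jk}$, with colimit maps $f_{ji} : N_j \to N_i$. Since each $f_{jk}$ fixes $M_j$, the $f_{ji}$ agree on the $M_j$ and glue to a $\K$-embedding of $M_i = \bigcup_{j<i} M_j$; after renaming I may assume $M_i \lea N_i$ and $f_{ji} \rest M_j = \id$. Coherence makes $a_i := f_{ji}(a_j)$ independent of $j$, and because $f_{ji}$ fixes $M_j$ I get $\gtp(a_i / M_j; N_i) = p_j$ for every $j < i$. Hence $q := \gtp(a_i / M_i; N_i)$ satisfies $q \rest M_j = p_j$ for all $j<i$, and now \emph{locality of $\bar p$} forces $q = p_i$. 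This single appeal to locality is precisely what guarantees that the colimit realizes $p_i$ and not merely all of its proper restrictions, and I expect it to be the main point to get right. For part (2), a short intersections computation (using $N_j = \cl^{N_j}(a_j M_j)$, that $f_{ji}[N_j] = \cl^{f_{ji}[N_j]}(a_i M_j)$, and that the intersection of two $\K$-submodels is again a $\K$-submodel) shows $f_{ji}[N_j] \subseteq \cl^{N_i}(a_i M_i)$ for each $j$, so $N_i = \bigcup_{j<i} f_{ji}[N_j] = \cl^{N_i}(a_i M_i)$ and the invariant is restored.

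It remains to control cardinalities in (1): whenever $M_{j+1} = M_j$ (equivalently $p_{j+1} = p_j$) I reuse $N_{j+1} := N_j$ with $f_{j,j+1} := \id$, and otherwise choose the realization $N''$ and the amalgam as small as the Löwenheim-Skolem-Tarski axiom allows. Since the strict-successor steps below $j$ inject into $M_j$, this keeps $\|N_j\| \le \|M_j\| + \LS(\K) < \sup_{i<\delta}(\|M_i\| + \LS(\K))^+$, so the amalgamation hypothesis is always applicable. This proves (1) and (2). The ``in particular'' is then immediate: an AEC with amalgamation (in every cardinal) meets the hypothesis of (1) and one with intersections meets that of (2), so in either case every local $\bar p$ is coherent, i.e.\ every $\bar M$ is coherent and $\K$ has coherent sequences.
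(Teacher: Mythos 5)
Your proof is correct and follows essentially the same route as the paper: part (1) is the standard amalgamation argument the paper delegates to \cite[11.5]{baldwinbook09}, and part (2) is the same induction with the invariant $N_i = \cl^{N_i}(a_i M_i)$, using Remark \ref{tp-intersec} at successors and a directed colimit plus locality at limits. The extra care you take with cardinality bookkeeping in (1) and with verifying the closure invariant at limit stages is sound and only fills in details the paper leaves to its references.
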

\begin{proof} \
  \begin{enumerate}
  \item Straightforward and well known. See the proof of \cite[11.5]{baldwinbook09}.
  \item We build the coherence witnesses $\seq{N_i : i < \alpha}$, $\seq{a_i : i < \alpha}$, $\seq{f_{ij} : i \le j < \alpha}$ inductively such that for all $i < \alpha$, $N_i = \cl^{N_i} (M_i a_i)$. The base case is trivial, and at limits we can take directed colimits (and use locality to see type equality is preserved; the closure condition will be preserved by finite character of the closure operator \cite[2.14(6)]{ap-universal-apal}). At successors, we are given $N_i$ and $a_i$ and want to build $N_{i + 1}$, $a_{i + 1}$, and $f_{i(i + 1)}$. Pick $N_{i + 1}'$ and $a_{i + 1}$ such that $p_{i + 1} = \gtp (a_{i + 1} / M_{i + 1}; N_{i + 1}')$. Then $p_{i + 1} \rest M_i = p_i$, so there exists an isomorphism $f_{i} : N_i \cong \cl^{N_{i + 1}'} (M_i a_{i + 1})$ such that $f (a_i) = a_{i + 1}$. Let $N_{i + 1} := \cl^{N_{i + 1}'} (M_{i + 1} a_{i + 1})$. By the coherence axiom of AECs, $\cl^{N_{i + 1}'} (M_i a_{i + 1}) \lea N_{i + 1}$. Thus composing $f_i$ with the inclusion gives the desired $\K$-embedding $f_{i(i + 1)}$ of $N_i$ into $N_{i + 1}$.
  \end{enumerate}
\end{proof}

\section{Existence from successive categoricity below a limit}\label{uic-sec}

In this section, we work in an arbitrary AEC (i.e.\ we do not assume tameness or weak amalgamation) and prove some relatively easy results about deriving no maximal models from categoricity below a limit.

The following easy observation will be used in the later sections:

\begin{lem}\label{limit-existence}
  Let $\K$ be an AEC and let $\mu > \LS (\K)$ be a limit cardinal of countable cofinality. If $\K$ is categorical in unboundedly-many cardinals below $\mu$, then $\K_\mu \neq \emptyset$.
\end{lem}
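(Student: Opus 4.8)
The plan is to approximate $\mu$ from below by a countable $\lea$-increasing chain of models whose sizes are categoricity cardinals, and then take the union. Write $\lambda := \LS (\K)$. Since $\cf (\mu) = \aleph_0$ and $\K$ is categorical in unboundedly-many cardinals below $\mu$, I can fix an increasing sequence $\seq{\mu_n : n < \omega}$ of cardinals, cofinal in $\mu$, such that $\K$ is categorical in each $\mu_n$ and $\mu_n > \lambda$ for all $n$ (the latter is possible because $\mu > \lambda$).

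The core of the argument is to build a $\lea$-increasing chain $\seq{M_n : n < \omega}$ with $\|M_n\| = \mu_n$ for all $n$, by induction on $n$. For the base case, let $M_0$ be any model in $\K_{\mu_0}$, which is nonempty by categoricity in $\mu_0$. For the successor step, suppose $M_n$ with $\|M_n\| = \mu_n$ has been built. Pick any $N \in \K_{\mu_{n + 1}}$ (nonempty by categoricity in $\mu_{n + 1}$), choose a subset $A \subseteq N$ with $|A| = \mu_n$, and apply the Löwenheim-Skolem-Tarski axiom (using $\mu_n \ge \lambda$) to obtain $M \lea N$ with $A \subseteq M$ and $\|M\| = \mu_n$. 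By categoricity in $\mu_n$ there is an isomorphism $h : M_n \cong M$. Since $\K$ and $\lea$ respect isomorphisms, I extend $h$ to a bijection of $|M_n|$ together with fresh points onto $|N|$ and pull back the structure of $N$ along it; this yields $M_{n + 1} \in \K$ with $M_{n + 1} \cong N$ (so $\|M_{n + 1}\| = \mu_{n + 1}$) and $M_n \lea M_{n + 1}$ (because $M \lea N$).

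Finally, set $M := \bigcup_{n < \omega} M_n$. By the Tarski-Vaught chain axioms (Definition \ref{aec-def}(2)), $M \in \K$. Since the $M_n$ form an increasing chain with $\|M_n\| = \mu_n$ and each $\mu_n \ge \lambda \ge \aleph_0$, we have $\|M\| = \sup_{n < \omega} \mu_n = \mu$, the last equality because the $\mu_n$ are cofinal in $\mu$. Hence $M \in \K_\mu$, so $\K_\mu \neq \emptyset$.

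The only step that uses the categoricity hypothesis essentially is the successor step, and it is the crux of the proof. In the absence of amalgamation there is no direct way to extend a given $M_n$ to a larger model; instead I locate a copy of $M_n$ inside an \emph{arbitrary} model of the next size (via Löwenheim-Skolem-Tarski together with uniqueness up to isomorphism) and transport the larger structure back along the resulting isomorphism. This is the one place where more than the pure AEC axioms is needed, and it is exactly why categoricity in the approximating cardinals, rather than mere existence of models, is required.
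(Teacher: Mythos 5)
Your proof is correct and follows essentially the same route as the paper's: choose a cofinal $\omega$-sequence of categoricity cardinals, build an increasing chain by locating a copy of the current model inside an arbitrary model of the next size (via Löwenheim--Skolem--Tarski and categoricity) and transporting structure back along the isomorphism, then take the union. The only cosmetic difference is that the paper merely requires $\mu_0 \ge \LS(\K)$ rather than $\mu_n > \LS(\K)$ for all $n$, which changes nothing.
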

\begin{proof}
  Since $\mu$ has countable cofinality, we can pick $\seq{\mu_i : i < \omega}$ increasing cofinal in $\mu$ such that $\mu_0 \ge \LS (\K)$ and $\mu_i$ is a categoricity cardinal for each $i < \omega$. Now we build an increasing chain $\seq{M_i : i < \omega}$ such that $M_i \in \K_{\mu_i}$ for all $i < \omega$. Then the union of the chain will be in $\K_\mu$. For $i = 0$, take any $M_0 \in \K_{\mu_0}$. For $i = j + 1$, given $M_j$, first pick any $N_i \in \K_{\mu_i}$. Pick $N_i^0 \lea N_i$ with $N_i^0 \in \K_{\mu_j}$. By categoricity, there is an isomorphism $f: N_i^0 \cong M_j$. With some renaming, we can extend this isomorphism to $g: N_i \cong M_i$, for some $M_i$ with $M_j \lea M_i$. Then $M_i \in \K_{\mu_i}$, as desired.
\end{proof}

The next two definitions are due to Shelah \cite{sh576}.

\begin{defin}
  An \emph{existence triple} in an abstract class $\K$ is a triple $(a, M, N)$ with $M \lea N$ both in $\K$ and $a \in N \backslash M$.
\end{defin}

\begin{defin}
  We say an abstract class $\K$ has \emph{weak extension} if for any existence triple in $(a, M, N)$ in $\K$, there exists an existence triple $(b, M', N')$ in $\K$ with $a = b$, $M \lta M'$, and $N \lta N'$. We say that $(b, M', N')$ is a \emph{strict extension} of $(a, M, N)$.
\end{defin}

The next two results are essentially due to Shelah, see \cite[\S VI.1]{shelahaecbook}. We give full proofs here because they are short and (for the second one) slightly simpler than Shelah's.

\begin{lem}\label{ext-1}
  Let $\K$ be an AEC and let $\mu > \LS (\K)$. If:
  
  \begin{enumerate}
  \item There exists an existence triple in $\K_{\LS (\K)}$.
  \item For every $\lambda \in [\LS (\K), \mu)$, $\K_\lambda$ has weak extension.
  \end{enumerate}

  Then not every element of $\K_\mu$ is maximal.
\end{lem}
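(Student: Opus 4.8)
The plan is to build an increasing continuous chain of existence triples $\seq{(a, M_i, N_i) : i < \mu}$ in which the realizing element $a$ is held \emph{fixed}, and then take its union. I would start, using hypothesis (1), with an existence triple $(a, M_0, N_0)$ in $\K_{\LS (\K)}$, so that $\|M_0\| = \|N_0\| = \LS (\K)$ and $a \in N_0 \backslash M_0$. At a successor stage $i + 1$, I would apply weak extension in $\K_{\|M_i\|}$ to the triple $(a, M_i, N_i)$: hypothesis (2) is available as long as $\|M_i\| \in [\LS (\K), \mu)$, and it yields a strict extension $(a, M_{i + 1}, N_{i + 1})$ with $M_i \lta M_{i + 1}$, $N_i \lta N_{i + 1}$, all still of cardinality $\|M_i\|$. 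At a limit stage $i$, I would set $M_i := \bigcup_{j < i} M_j$ and $N_i := \bigcup_{j < i} N_j$.

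Two points must be checked at limits. First, that $(a, M_i, N_i)$ is again an existence triple: $M_i \lea N_i$ follows from the Tarski--Vaught chain axioms applied to $\seq{M_j : j < i}$ together with $M_j \lea N_j \lea N_i$, while $a \in N_i \backslash M_i$ holds because $a$ lies in every $N_j$ but in no $M_j$. Second, the cardinality bookkeeping: I would maintain the invariant $\|M_i\| = \|N_i\| = \LS (\K) + |i|$, so that the triple genuinely lives in the single class $\K_{\LS (\K) + |i|}$ and weak extension applies to it. Successors preserve this invariant (weak extension in $\K_\lambda$ keeps everything in $\K_\lambda$), and at a limit $i$ the union of a strictly increasing chain of length $i$ has cardinality $\max(|i|, \sup_{j < i} \|M_j\|) = \LS (\K) + |i|$; since $i < \mu$ and $\mu$ is a cardinal, this stays strictly below $\mu$, so hypothesis (2) remains applicable at the next successor.

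Having built the chain for all $i < \mu$, I would let $M := \bigcup_{i < \mu} M_i$ and $N := \bigcup_{i < \mu} N_i$. The same argument as at the limit stages shows that $(a, M, N)$ is an existence triple; in particular $M \lta N$, so $M$ is not maximal. Finally $\|M\| = \mu$: the chain is strictly increasing over $\mu$ steps, so $\|M\| \ge \mu$, while $\|M\| \le \mu \cdot \sup_{i < \mu} \|M_i\| \le \mu$. Hence $M \in \K_\mu$ is a non-maximal model, which is exactly the desired conclusion.

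The routine verifications (the AEC chain axioms at limits, and that the fixed element $a$ escapes every $M_i$) are straightforward. The point requiring the most care is the cardinality management. Since weak extension in $\K_\lambda$ does \emph{not} raise cardinality, all growth must come from limit stages through the sheer length of the chain; I must therefore simultaneously keep $\|M_i\| = \|N_i\|$ strictly below $\mu$ at every stage $i < \mu$ (so that hypothesis (2) keeps applying and the triple stays confined to a single $\K_\lambda$) while arranging that a chain of full length $\mu$ forces the terminal union up to cardinality exactly $\mu$.
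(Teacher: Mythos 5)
Your proposal is correct and is essentially the paper's own proof: the same chain $\seq{(a, M_i, N_i)}$ with the realizing element $a$ held fixed, weak extension at successors, unions at limits, the invariant $\|M_i\| = |i| + \LS(\K)$, and the observation that $a \in N_\mu \setminus M_\mu$ witnesses $M_\mu \lta N_\mu$ while strict increase forces $\|M_\mu\| = \mu$. The only cosmetic difference is that the paper indexes the chain by $i \le \mu$ rather than taking the final union separately.
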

\begin{proof}
  Pick an existence triple $(a, M, N)$ in $\K_{\LS (\K)}$. Now build $\seq{M_i : i \le \mu}$, $\seq{N_i : i \le \mu}$ both increasing continuous such that for every $i < \mu$:

  \begin{enumerate}
  \item $M_0 = M$, $N_0 = N$.
  \item $M_i, N_i \in \K_{|i| + \LS (\K)}$.
  \item $M_i \lea N_i$.
  \item $a \in N_i \backslash M_i$.
  \item $M_{i} \lta M_{i + 1}$.
  \end{enumerate}

  This is possible using the weak extension property at successor stages and taking unions at limit stages. In the end, $M_\mu \lta N_\mu$, since $a \in N_\mu \backslash M_\mu$. Thus $M_\mu$ is not maximal. Since $\seq{M_i : i \le \mu}$ was strictly increasing,  $M_\mu \in \K_\mu$.
\end{proof}

\begin{lem}\label{ext-2}
  Let $\K$ be an AEC. If $\K$ is categorical in $\LS (\K)$ and $\K_{\LS (\K)^+}$ has no maximal models, then $\K_{\LS (\K)}$ has weak extension.
\end{lem}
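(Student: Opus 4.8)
The plan is to fix an existence triple $(a, M, N)$ in $\K_\lambda$ (writing $\lambda := \LS(\K)$) and to manufacture a strict extension $(a, M', N')$ of it. First I would record the one consequence of categoricity that drives everything: since $(a,M,N)$ witnesses that \emph{some} model of $\K_\lambda$ has a proper extension, categoricity in $\lambda$ lets me transfer this extension along an isomorphism, so that \emph{every} $M^* \in \K_\lambda$ has a proper $\lea$-extension in $\K_\lambda$ (take $f : M^* \cong M$, rename the points of $N \setminus M$, and push the pair $M \lta N$ back across $f$). Iterating $\lambda^+$ times, taking unions at limits — which remain in $\K_\lambda$ until the very top — I build a $\lea$-increasing continuous chain of length $\lambda^+$ starting at $N$; its union $P$ lies in $\K_{\lambda^+}$ with $N \lea P$, so in particular $a \in P$. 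Only now do I invoke the hypothesis: since $\K_{\lambda^+}$ has no maximal models, there is some $Q \gta P$, and I fix a witness $c \in Q \setminus P$.

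Next I would reduce the statement to a single cleaner task. Suppose I can produce $M' \lea Q$ with $\|M'\| = \lambda$, $M \lta M'$, and $a \notin M'$. Then, applying the Löwenheim–Skolem–Tarski axiom inside $Q$ to the set $M' \cup N \cup \{c\}$, I obtain $N' \lea Q$ of size $\lambda$ containing it; coherence gives $M' \lea N'$ and $N \lea N'$, while $c \in N' \setminus N$ (as $c \notin P \supseteq N$) forces $N \lta N'$, and $a \in N \subseteq N'$ together with $a \notin M'$ gives $a \in N' \setminus M'$. Thus $(a, M', N')$ is exactly the desired strict extension. So everything comes down to finding a proper, $\lambda$-sized $\lea$-extension of $M$ inside $Q$ that omits the single point $a$.

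This omission is precisely the main obstacle, and it is where categoricity must be used in an essential way: mere cardinality room does not suffice, since a Löwenheim–Skolem hull of $M \cup \{c\}$ could accidentally absorb $a$. The point to exploit is that an abstract proper extension of $M$ omitting $a$ certainly exists — transferring $M \lea N$ over $\mathrm{id}_M$ and renaming the new points produces $M \lta M^{\dagger}$ with $M^{\dagger} \cong_M N$ and $a \notin M^{\dagger}$ — so the real content is to \emph{realize} such a copy of $N$ over $M$ inside $Q$, using that $c$ is generic over all of $P$ (hence "independent" of the low-lying point $a$) and that, by categoricity, the hull of $M \cup \{c\}$ is isomorphic to $N$. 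I expect this to be the one delicate step, corresponding to the heart of Shelah's argument, and the part to write out with real care. The global logic is self-dual: a failure to secure such an $M'$ for \emph{some} triple would instead let one run the chain construction to a genuinely maximal model of $\K_{\lambda^+}$, directly contradicting the no-maximal-models hypothesis.
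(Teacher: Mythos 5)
Your reduction in the second paragraph is sound, but the argument has a genuine gap exactly where you flag it: you never produce the model $M' \lea Q$ with $M \lta M'$, $\|M'\| = \lambda$, and $a \notin M'$, and the ideas you gesture at for this step do not close it. Categoricity in $\lambda$ tells you that the Löwenheim--Skolem hull of $M \cup \{c\}$ inside $Q$ is abstractly isomorphic to $N$, but it gives no control over whether that hull absorbs $a$, and in a bare AEC there is no notion of ``$c$ generic over $P$'' that would let you exclude a prescribed low-lying point. The closing ``self-dual'' remark is likewise not an argument as given: the failure of weak extension for one particular triple does not obviously let you build a maximal model of $\K_{\lambda^+}$. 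So the one step you defer is the entire content of the lemma.

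The paper's proof sidesteps the omission problem rather than solving it head-on, and this is the device you are missing. Instead of one triple, it stacks $\lambda^+$ isomorphic copies: build $\seq{M_i : i \le \lambda^+}$ increasing continuous together with elements $a_i$ so that each $(a_i, M_i, M_{i+1})$ is the image of $(a, M, N)$ under an isomorphism $f_i : N \cong M_{i+1}$ with $f_i[M] = M_i$ (possible by categoricity in $\lambda$). Take $M' \gta M_{\lambda^+}$ in $\K_{\lambda^+}$ using no maximal models, fix a resolution $\seq{M_i' : i \le \lambda^+}$ of $M'$ and a club $C$ on which $M_i \lea M_i'$ and $M_{\lambda^+} \cap M_i' = M_i$. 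Picking $b \in M' \setminus M_{\lambda^+}$ and $i \in C$ with $b \in M_i'$, the intersection condition gives $a_i \notin M_i'$ for free (since $a_i \in M_{\lambda^+} \setminus M_i$), while $b$ witnesses $M_i \lta M_i'$; choosing $j$ with $M_{i+1} \lea M_j'$ yields the strict extension $(a_i, M_i', M_j')$, which pulls back along $f_i$ to a strict extension of $(a, M, N)$. The point is that one never needs to omit a prescribed element from a prescribed hull: with $\lambda^+$ candidate triples, the club argument guarantees that for some index the exclusion of $a_i$ happens automatically. To salvage your outline you would need to import this mechanism; as written, the proof does not go through.
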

\begin{proof}
  Let $\lambda := \LS (\K)$. Let $(a, M, N)$ be an existence triple in $\K_{\lambda}$. We want to find a strict extension of $(a, M, N)$. We build $\seq{M_i : i \le \lambda^+}$ increasing continuous and $\seq{a_i : i < \lambda^+}$ such that for all $i < \lambda^+$:

  \begin{enumerate}
  \item $(a_i, M_i, M_{i + 1})$ is an existence triple in $\K_{\lambda}$.
  \item There exists an isomorphism $f_i : N \cong M_{i + 1}$ so that $f_i[M] = M_i$ and $f_i (a) = a_i$.
  \end{enumerate}

  This is possible by categoricity. This is enough: since $\K_{\lambda^+}$ has no maximal models, there exists $M' \in \K_{\lambda^+}$ with $M_{\lambda^+} \lta M'$. Let $\seq{M_i' : i \le \lambda^+}$ be an increasing continuous resolution of $M'$. Let $C \subseteq \lambda^+$ be a club such that $i \in C$ implies $M_i \lea M_i'$ and moreover $M_{\lambda^+} \cap M_i' = M_i$. Pick $b \in M' \backslash M_{\lambda^+}$ and pick $i \in C$ so that $b \in M_i'$. Then $M_i \lta M_i'$ and one can pick $j < \lambda^+$ so that $M_{i + 1} \lta M_j'$. Then $(a_i, M_i', M_j')$ is a strict extension of $(a_i, M_i, M_{i + 1})$. Taking an isomorphic copy, we obtain a strict extension of the original triple $(a, M, N)$.
\end{proof}

Putting the two lemmas together, we obtain:

\begin{thm}\label{basic-existence-thm}
  Let $\K$ be an AEC and let $\mu > \LS (\K)$ be a limit cardinal. If $\K$ is categorical in every cardinal in $[\LS (\K), \mu)$, then not every element of $\K_\mu$ is maximal.
\end{thm}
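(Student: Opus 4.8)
The plan is to chain the two preceding lemmas together, the only real content being to supply \emph{for free} the ``no maximal models'' hypotheses that Lemma~\ref{ext-2} demands. Write $\lambda_0 := \LS (\K)$. The endgame will be a single application of Lemma~\ref{ext-1} to the given $\mu$, so it suffices to produce (i) an existence triple in $\K_{\lambda_0}$, and (ii) weak extension in $\K_\lambda$ for every $\lambda \in [\lambda_0, \mu)$.

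For (ii) I would derive weak extension in $\K_\lambda$ from Lemma~\ref{ext-2}, applied not to $\K$ itself but to the AEC $\K_{\ge \lambda}$ (which is an AEC with $\LS (\K_{\ge \lambda}) = \lambda$). That lemma needs categoricity in $\lambda$, which is part of the hypothesis since $\lambda < \mu$, together with the fact that $\K_{\lambda^+}$ has no maximal models. This last ingredient is where the subtlety lies, and where a naive level-by-level bootstrap would threaten to become circular: to run Lemma~\ref{ext-2} at $\lambda$ one seems to need ``no maximal models at $\lambda^+$'', while the obvious route to the latter (via Lemma~\ref{ext-1} at $\lambda^+$) would in turn require weak extension at $\lambda$. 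The escape, and the main point to get right, is that $\mu$ is a \emph{limit} cardinal: $\lambda < \mu$ forces $\lambda^+ < \mu$ and hence $\lambda^{++} < \mu$, so $\K$ is categorical, in particular nonempty, in $\lambda^{++}$. Taking any $N \in \K_{\lambda^{++}}$ and applying the Löwenheim-Skolem-Tarski axiom to a subset of $N$ of size $\lambda^+$ yields $M \lea N$ with $\|M\| = \lambda^+$, whence $M \lta N$ and $M$ is not maximal; categoricity in $\lambda^+$ then upgrades this to ``no model of $\K_{\lambda^+}$ is maximal''. Thus the ``no maximal models'' inputs come purely from categoricity plus downward Löwenheim-Skolem one cardinal higher, never from weak extension, and no circularity arises.

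For (i) the same mechanism works at the bottom: since $\lambda_0^+ < \mu$, the class $\K_{\lambda_0^+}$ is nonempty, so some $M \in \K_{\lambda_0}$ embeds properly (as a $\lea$-substructure) into a model $N$ of size $\lambda_0^+$; pulling an element $a \in N \setminus M$ together with a size-$\lambda_0$ witness back down by Löwenheim-Skolem-Tarski (and using the coherence axiom to see $M \lea N'$) gives an existence triple $(a, M, N')$ in $\K_{\lambda_0}$. With (i) and (ii) established, Lemma~\ref{ext-1} applies directly to $\mu$ and yields that not every element of $\K_\mu$ is maximal. The only routine check left is the relativization to $\K_{\ge \lambda}$---that it is an AEC with Löwenheim-Skolem-Tarski number $\lambda$ and that ``weak extension'' and ``no maximal models'' there agree with the corresponding statements about $\K_\lambda$ and $\K_{\lambda^+}$---but the genuine conceptual step is the non-circular sourcing of the no-maximal-models hypotheses from the limit-ness of $\mu$.
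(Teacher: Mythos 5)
Your proposal is correct and follows essentially the same route as the paper: source the existence triple and the ``no maximal models at $\lambda^+$'' inputs from categoricity plus the fact that $\mu$ is a limit (so $\lambda^{++} < \mu$), apply Lemma~\ref{ext-2} to $\K_{\ge \lambda}$ to get weak extension at every $\lambda \in [\LS(\K), \mu)$, and conclude with Lemma~\ref{ext-1}. Your explicit remarks on non-circularity and on cutting extensions back down via Löwenheim-Skolem-Tarski and coherence just spell out details the paper leaves implicit.
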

\begin{proof}
  Note that for every $\lambda \in [\LS (\K), \mu)$, both $\K_{\lambda}$ and $\K_{\lambda^+}$ are not empty and have no maximal models (we are using that $\mu$ is limit to deduce this for $\K_{\lambda^+}$). In particular, there is an existence triple in $\K_{\LS (\K)}$. Moreover by Lemma \ref{ext-2} (applied to $\K_{\ge \lambda}$), for any $\lambda \in [\LS (\K), \mu)$, $\K_\lambda$ has the weak extension property. By Lemma \ref{ext-1}, we get the result.
\end{proof}
\begin{cor}\label{basic-existence-cor}
  Let $\K$ be an AEC and let $\mu > \LS (\K)$ be a limit cardinal. If $\K$ is categorical in every cardinal in $[\LS (\K), \mu]$, then $\K_{\mu^+} \neq \emptyset$.
\end{cor}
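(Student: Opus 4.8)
The plan is to leverage Theorem \ref{basic-existence-thm} together with the extra categoricity hypothesis at the endpoint $\mu$ (which is the only difference between the corollary's hypotheses and those of the theorem). Theorem \ref{basic-existence-thm} applies verbatim, since categoricity in $[\LS (\K), \mu]$ entails categoricity in $[\LS (\K), \mu)$ and $\mu$ is limit; it yields a \emph{single} non-maximal model $M \in \K_\mu$. The first key observation is that non-maximality is invariant under isomorphism, so categoricity in $\mu$ upgrades this at once: every model of $\K_\mu$ is isomorphic to $M$, hence \emph{every} element of $\K_\mu$ is non-maximal. In other words, $\K_\mu$ has no maximal models.

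Granting this, the second step is a routine chain construction to manufacture a model of cardinality exactly $\mu^+$. I would build an increasing continuous chain $\seq{M_i : i < \mu^+}$ with each $M_i \in \K_\mu$ and $M_i \lta M_{i + 1}$ for all $i$. The base case takes any $M_0 \in \K_\mu$ (nonempty by categoricity in $\mu$). At successor stages, since $\K_\mu$ has no maximal models, there is $N \in \K$ with $M_i \lta N$; picking $b \in N \backslash M_i$ and applying the Löwenheim-Skolem-Tarski axiom to $M_i \cup \{b\} \subseteq N$ produces $M_{i + 1} \lea N$ of cardinality $\|M_i\| + \LS (\K) = \mu$ containing $b$. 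The coherence axiom then gives $M_i \lea M_{i + 1}$, and $b \in M_{i + 1} \backslash M_i$ forces $M_i \neq M_{i + 1}$, so $M_i \lta M_{i + 1}$. At limit stages $i < \mu^+$ I take the union of the chain built so far; by the Tarski-Vaught chain axioms this lies in $\K$, and its cardinality is between $\mu$ and $|i| + \mu = \mu$, so it is in $\K_\mu$ as required for continuity.

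Finally, the union $M_{\mu^+} := \bigcup_{i < \mu^+} M_i$ is in $\K$ by the chain axioms, and since the chain is strictly increasing of length $\mu^+$ with each member of size $\mu$, its cardinality is $\mu^+$. Thus $M_{\mu^+} \in \K_{\mu^+}$, witnessing $\K_{\mu^+} \neq \emptyset$.

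I do not anticipate a serious obstacle here: the conceptual content is entirely packaged into Theorem \ref{basic-existence-thm}, and the only genuinely new ingredient is the short remark that categoricity in $\mu$ globalizes non-maximality across $\K_\mu$. The mild point requiring care is verifying that the limit stages of the chain stay inside $\K_\mu$ (so that the successor step, which needs a \emph{model of size $\mu$} with no maximal extension of itself within the no-maximal-models regime, remains available); this is immediate from the cardinal computation $|i| \le \mu$ for $i < \mu^+$.
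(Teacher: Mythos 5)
Your proof is correct and follows the same route as the paper: apply Theorem \ref{basic-existence-thm}, use categoricity in $\mu$ to conclude $\K_\mu$ has no maximal models, and then obtain a model of cardinality $\mu^+$. The only difference is that you spell out the standard strictly-increasing chain construction of length $\mu^+$, which the paper leaves implicit.
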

\begin{proof}
  By Theorem \ref{basic-existence-thm}, not every model in $\K_\mu$ is maximal. By categoricity in $\mu$, this implies that $\K_\mu$ has no maximal models, hence that $\K_{\mu^+} \neq \emptyset$.
\end{proof}

\section{Local saturation and splitting}

The present section is the core of the paper. We adapt known results about saturated models and splitting to setups without amalgamation (but often with weak amalgamation and/or tameness). For several results, categoricity is not needed, but in the end we will use it to put everything together.

The following are localized definitions of the well known variations of saturation:

\begin{defin}\label{sat-def}
  Let $\K$ be an AEC. For $\theta$ an infinite cardinal, a model $M$ is \emph{locally $\theta$-saturated} if for any $N \gea M$ and any $A \subseteq M$ with $|A| < \theta$, we have that $\gS (A; N) = \gS (A; M)$. For $M_0 \lea M$, we say that $M$ is \emph{locally $\theta$-universal over $M_0$} if for any $M_0 \lea N_0 \lea N$ with $M \lea N$ and $\|N_0\| < \theta$, $N_0$ embeds into $M$ over $M_0$. $M$ is \emph{locally $\theta$-model-homogeneous} if it is $\theta$-universal over $M_0$ for any $M_0 \lea M$ with $\|M_0\| < \theta$. We say that $M$ is \emph{locally saturated} when it is locally $\|M\|$-saturated, and similarly for locally model-homogeneous. We say that $M$ is \emph{locally universal over $M_0$} if it is locally $\|M_0\|^+$-universal over $M_0$.
\end{defin}

The usual exhaustion argument shows that locally model-homogeneous model can be built assuming some cardinal arithmetic. See for example the proof of \cite[Theorem 1]{rosicky-sat-jsl}.

\begin{fact}\label{mh-dense}
  Let $\K$ be an AEC. For any $M \in \K$ and any regular cardinal $\theta > \LS (\K)$, there exists $N \in \K$ such that $M \lea N$, $N$ is locally $\theta$-model-homogeneous, and $\|N\| \le \|M\|^{<\theta}$.
\end{fact}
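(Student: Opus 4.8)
The plan is to build $N$ by an exhaustion argument, as an increasing continuous union $N = \bigcup_{i < \theta} N_i$ of a chain of length $\theta$ with $N_0 := M$ (so that $M \lea N$), where each successor step $N_i \lea N_{i+1}$ realizes all the relevant small extension problems over the substructures of $N_i$. Call a pair $(M_0, N_0)$ a \emph{problem over $N_i$} if $M_0 \lea N_i$, $M_0 \lea N_0$, and $\|M_0\| + \|N_0\| < \theta$. To \emph{realize} such a problem over a model $N' \gea N_i$ means to find $N'' \gea N'$ with $N_0 \hookrightarrow N''$ over $M_0$; whenever $N_0$ amalgamates with $N'$ over $M_0$ (that is, there is $W \gea N'$ and $g : N_0 \xrightarrow[M_0]{} W$), this can be done while adding at most $\|N_0\| + \LS (\K) < \theta$ new elements, by applying the Löwenheim--Skolem--Tarski axiom to $N' \cup g[N_0]$ inside $W$ and invoking coherence. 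At stage $i$ I will enumerate (representatives up to isomorphism over the base of) all problems over $N_i$ and realize, one at a time along a sub-chain $\seq{N_i^\xi : \xi \le \nu_i}$ with $N_i^0 = N_i$ and $N_{i+1} := N_i^{\nu_i}$, each problem that amalgamates with the current model $N_i^\xi$ (setting $N_i^{\xi + 1} := N_i^\xi$ when it does not).

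For the cardinality bound, set $\chi := \|M\|^{<\theta}$ and note that, since $\theta$ is regular, $\chi^{<\theta} = \chi$; I maintain the invariant $\|N_i\| \le \chi$. Given $N_i$ with $\|N_i\| \le \chi$, the number of $M_0 \lea N_i$ with $\|M_0\| < \theta$ is at most $\|N_i\|^{<\theta} \le \chi$ (a $\lea$-substructure of $N_i$ is determined by its universe), and for each such $M_0$ the number of isomorphism types over $M_0$ of extensions $N_0 \gea M_0$ with $\|N_0\| < \theta$ is at most $2^{<\theta} \le \chi$ (using $|\tau (\K)| \le \LS (\K) < \theta$ and $\|M\| \ge 2$). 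Hence $\nu_i \le \chi$, and as each sub-step adds fewer than $\theta$ elements, $\|N_{i+1}\| \le \|N_i\| + \chi \cdot \theta = \chi$; taking unions at limits preserves the bound, so $\|N\| \le \chi \cdot \theta = \chi = \|M\|^{<\theta}$, as required.

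It remains to check that $N$ is locally $\theta$-model-homogeneous. Fix $M_0 \lea N$ with $\|M_0\| < \theta$, and suppose $M_0 \lea N_0 \lea Q$ with $N \lea Q$ and $\|N_0\| < \theta$; I must embed $N_0$ into $N$ over $M_0$. Since $\theta$ is regular and $\|M_0\| < \theta$, the universe of $M_0$ is contained in some $N_i$, and then coherence gives $M_0 \lea N_i$. Thus $(M_0, N_0)$ is, up to isomorphism over $M_0$, one of the problems enumerated at stage $i$; say its representative is processed at sub-stage $\xi$, with current model $N_i^\xi$.

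The crux is to see that this problem is realized rather than skipped, i.e.\ that $N_0$ amalgamates with $N_i^\xi$ over $M_0$ at the moment it is processed. This is where the apparent difficulty of the amalgamationless setting---that amalgamability with $N_i$ need not persist to the larger running model---evaporates: every model produced in the construction satisfies $N_i^\xi \lea N_{i+1} \lea N$, so $N_i^\xi \lea N \lea Q$. As $N_0 \lea Q$ and $M_0 \lea N_i^\xi$, the model $Q$ witnesses that $N_0$ amalgamates with $N_i^\xi$ over $M_0$. There is no circularity: the construction blindly realizes whatever amalgamates with its current model, and the above is merely an \emph{a posteriori} verification, using the final $N$ and the given witness $Q$, that the relevant problem did amalgamate at processing time. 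Consequently $N_0 \hookrightarrow N_i^{\xi + 1} \lea N$ over $M_0$, which is exactly what was needed. I expect this persistence-of-amalgamability observation to be the main (and essentially the only) nonroutine point; everything else is standard bookkeeping and cardinal arithmetic.
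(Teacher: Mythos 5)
Your proof is correct, and it is the same standard exhaustion argument that the paper invokes (it only cites ``the usual exhaustion argument'' à la the proof of Theorem~1 of the Rosický reference rather than writing it out). You correctly identify and handle the one non-routine point in the amalgamation-free setting: only problems already witnessed compatible by some extension $Q$ of the final model need to be realized, and that witness shows a posteriori that they amalgamated with the running model at processing time.
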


Assuming categoricity in a suitable unbounded set, we can build locally model-homogeneous models. 

\begin{lem}\label{categ-mh}
  Let $\K$ be an AEC and let $\mu > \LS (\K)$ be a strong limit cardinal. If for every $\theta < \mu$ there exists $\lambda < \mu$ such that $\lambda = \lambda^\theta$ and $\K$ is categorical in $\lambda$, then every object of $\K_\mu$ is locally model-homogeneous.
\end{lem}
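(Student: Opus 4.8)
The plan is to reduce local $\mu$-model-homogeneity of a model $M \in \K_\mu$ to local $\theta$-model-homogeneity, for a suitable $\theta < \mu$, of a well-chosen $\K$-substructure of $M$ whose size is a categoricity cardinal $\lambda$, and then to manufacture the latter using Fact \ref{mh-dense} together with categoricity. Concretely, fix $M \in \K_\mu$. To see that $M$ is locally model-homogeneous, I fix $M_0 \lea M$ with $\|M_0\| < \mu$ and a witnessing configuration $M_0 \lea N_0 \lea N$ with $M \lea N$ and $\|N_0\| < \mu$; the goal is to embed $N_0$ into $M$ over $M_0$. Set $\chi := \|N_0\| + \|M_0\| + \LS (\K)$ and $\theta := \chi^+$. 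Since $\mu$ is a limit cardinal, $\chi < \mu$ forces $\theta < \mu$; moreover $\theta$ is regular (being a successor) and $\theta > \LS (\K)$, so $\theta$ is eligible both for the hypothesis and for Fact \ref{mh-dense}.

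Next I would apply the hypothesis to this $\theta$ to obtain a categoricity cardinal $\lambda < \mu$ with $\lambda = \lambda^\theta$. The relevant cardinal-arithmetic bookkeeping is that $\lambda^{<\theta} = \lambda$: since $\theta = \chi^+$ is a successor, $\lambda^{<\theta} = \lambda^{\chi} \le \lambda^\theta = \lambda$ while $\lambda^\chi \ge \lambda$, and similarly $\lambda = \lambda^\theta \ge 2^\theta > \theta > \chi$, so in particular $\lambda > \chi \ge \|M_0\|, \|N_0\|, \LS (\K)$. Now pick any $M_* \in \K_\lambda$ (nonempty by categoricity in $\lambda$) and use Fact \ref{mh-dense} to find $N_* \gea M_*$ that is locally $\theta$-model-homogeneous with $\|N_*\| \le \|M_*\|^{<\theta} = \lambda^{<\theta} = \lambda$. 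As $M_* \lea N_*$ this pins down $\|N_*\| = \lambda$, so $\K_\lambda$ contains a locally $\theta$-model-homogeneous model. Because local $\theta$-model-homogeneity is an isomorphism-invariant property, categoricity in $\lambda$ now upgrades this to: \emph{every} model in $\K_\lambda$ is locally $\theta$-model-homogeneous.

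Finally I would use the Löwenheim-Skolem-Tarski axiom to extract $M' \lea M$ with $M_0 \subseteq M'$ and $\|M'\| = \lambda$ (possible because $\|M_0\| \le \chi < \lambda < \mu = \|M\|$, so one can expand $M_0$ to a subset of $M$ of size exactly $\lambda$); coherence then gives $M_0 \lea M'$. By the previous paragraph $M'$ is locally $\theta$-model-homogeneous, hence, since $\|M_0\| < \theta$, locally $\theta$-universal over $M_0$. Applying this to the configuration $M_0 \lea N_0 \lea N$ with $M' \lea M \lea N$ and $\|N_0\| < \theta$ yields a $\K$-embedding of $N_0$ into $M'$ over $M_0$, and composing with $M' \lea M$ produces the desired embedding into $M$ over $M_0$.

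The only delicate point — and hence the step I would treat most carefully — is the coordination of cardinal arithmetic that forces the locally $\theta$-model-homogeneous witness $N_*$ to have size \emph{exactly} $\lambda$. The equation $\lambda = \lambda^\theta$ supplied by the hypothesis is precisely engineered to yield $\lambda^{<\theta} = \lambda$, which is what lets the density statement of Fact \ref{mh-dense} land inside $\K_\lambda$ so that categoricity can be invoked; everything else is a routine Löwenheim-Skolem descent and an application of the relevant definitions.
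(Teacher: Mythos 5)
Your proof is correct and follows essentially the same route as the paper's: pick a categoricity cardinal $\lambda < \mu$ with $\lambda = \lambda^\theta$ for a $\theta$ bounding the relevant configuration, use Fact \ref{mh-dense} plus categoricity to see that the (unique) model of $\K_\lambda$ is locally $\theta$-model-homogeneous, extract such a submodel of $M$ above $M_0$ by Löwenheim-Skolem, and embed $N_0$ into it. The only cosmetic difference is that you set $\theta := (\|N_0\| + \|M_0\| + \LS(\K))^+$ and work with $\theta$-model-homogeneity, whereas the paper bounds the configuration by $\theta$ and uses $\theta^+$-model-homogeneity; both bookkeeping choices work.
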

\begin{proof}
  Let $M \in \K_\mu$. Fix $\theta < \mu$, $N \in \K_\mu$ with $M \lea N$, and $M_0, N_0 \in \K_{\le \theta}$ with $M_0 \lea M$, $M_0 \lea N_0 \lea N$. We have to see that $N_0$ embeds inside $M$ over $M_0$. Without loss of generality, $\theta \ge \LS (\K)$. By assumption, we can pick a categoricity cardinal $\lambda < \mu$ such that $\lambda = \lambda^\theta$. Let $M_0' \lea M$ be such that $M_0 \lea M_0'$ and $M_0' \in \K_{\lambda}$. By categoricity in $\lambda$ and Fact \ref{mh-dense}, $M_0'$ is locally $\theta^+$-model-homogeneous. In particular, $N_0$ embeds into $M_0'$ over $M_0$. Thus $N_0$ embeds into $M$ over $M_0$, as desired.
\end{proof}

The following notion was introduced by Shelah \cite[3.2]{sh394} for AECs with amalgamation. 

\begin{defin}[Splitting]
  Let $\K$ be an AEC, let $N \in \K$, $A \subseteq N$, let $p \in \gS (N)$, and let $\theta$ be an infinite cardinal with $|A| < \theta$. We say that $p$ \emph{$(<\theta)$-splits over $A$} if there exists $M_1, M_2 \in \K_{<\theta}$ such that $A \subseteq M_\ell \lea N$ for $\ell = 1,2$ and $f: M_1 \cong_A M_2$ so that $f (p \rest M_1) \neq p \rest M_2$. We say that $p$ \emph{$\theta$-splits over $A$} if it $(<\theta^+)$-splits over $A$. We say that $p$ \emph{splits over $A$} if it $(|A| + \aleph_0)$-splits over $A$.
\end{defin}

\begin{remark}\label{spl-witness-rmk}
  Let $N \in \K$, $A \subseteq N$, let $p \in \gS (N)$ and let $\theta$ be an infinite cardinal with $|A| < \theta$. If $p$ $(<\theta)$-splits over $A$, then there exists $N_0 \lea N$ with $A \subseteq N_0$ and $\|N_0\| < \theta$ such that $p \rest N_0$ $(<\theta)$-splits over $A$.

\end{remark}

The following is a generalization of \cite[4.12]{stab-spec-jml}.

\begin{lem}[Local character]\label{splitting-lc}
  Let $\K$ be an AEC, let $M \in \K_{\ge \aleph_0}$, let $\delta$ be a regular cardinal, and let $\seq{A_i : i \le \delta}$ be an increasing continuous chain of sets with $A_\delta = M$. Let $\theta$ be either $\|M\|^+$ or $\sup_{i < \delta} |A_i|^+$. Let $p \in \gS (M)$. If there exists a regular $\chi \le \|M\|$ such that:

  \begin{enumerate}
  \item $\K_{<\theta}$ is $(<\chi)$-tame.
  \item $M$ is locally $(\chi + \delta^+)$-saturated.
  \end{enumerate}

  Then there exists $i < \delta$ such that $p$ does not $(<\theta)$-split over $A_i$.
\end{lem}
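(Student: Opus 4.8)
The plan is to argue by contradiction. Suppose that $p$ $(<\theta)$-splits over $A_i$ for every $i < \delta$. I would first invoke Remark \ref{spl-witness-rmk}: for each $i < \delta$, since $p$ $(<\theta)$-splits over $A_i$, there exist witnesses $N_i^1, N_i^2 \in \K_{<\theta}$ with $A_i \subseteq N_i^\ell \lea M$ and an isomorphism $g_i : N_i^1 \cong_{A_i} N_i^2$ with $g_i(p \rest N_i^1) \neq p \rest N_i^2$. The goal is to extract a contradiction by building a tree or a long chain of distinct types that overruns the local saturation bound. The key tension is between the size of the witnessing models (controlled by $\theta$) and the number of distinct types one can separate using $(<\chi)$-tameness.

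**The core construction.**

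The main idea, following \cite[4.12]{stab-spec-jml}, is to use splitting to build an increasing continuous chain $\seq{B_j : j < \chi}$ of subsets of $M$, each of size $<\theta$, together with a sequence of automorphisms or embeddings, so that the types $p \rest B_j$ witness ever-finer splitting. Concretely, since $\chi \le \|M\|$ is regular and we have $(<\chi)$-tameness on $\K_{<\theta}$, I would use tameness to locate, inside the splitting witnesses, small sets (of size $<\chi$) over which the relevant types already differ. The continuity of the chain $\seq{A_i : i \le \delta}$ lets me find, by a pressing-down / club argument, a single index $i^* < \delta$ such that all the small tameness-witnesses accumulated below some stage are already contained in $A_{i^*}$; this is where regularity of $\delta$ and the choice $\theta = \sup_{i<\delta}|A_i|^+$ (or $\|M\|^+$) matters, since it guarantees the witnesses stabilize below $\delta$. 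The point is to contradict splitting over $A_{i^*}$ by showing the purported splitting is already ``resolved'' at a lower level.

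**Using local saturation.**

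The role of hypothesis (2), local $(\chi + \delta^+)$-saturation of $M$, is to ensure that types over the small sets $A_i$ (and over the tameness-witnesses of size $<\chi$) that appear to be realized ``outside'' are in fact realized inside $M$: since $|A_i| < \theta$ and we only need to realize types over sets of size $<\chi + \delta^+$, local saturation lets me pull realizations of the relevant types back into $M$, so that the automorphisms $g_i$ can be iterated and composed coherently within $M$ rather than in some external extension. This is what makes the inductive construction of the separating chain go through without amalgamation: local saturation is the substitute for being able to amalgamate the witnessing diagrams. I would combine this with $(<\chi)$-tameness to convert the global type-inequality $g_i(p \rest N_i^1) \neq p \rest N_i^2$ into an inequality witnessed on a set of size $<\chi$, feeding the next stage of the construction.

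**The main obstacle.**

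The hard part will be the bookkeeping that forces a contradiction: one must show that iterating the splitting $\chi$-many (or $\delta$-many) times produces either $\chi^+$-many distinct types over a single set of size $<\chi + \delta^+$ inside $M$, contradicting local $(\chi + \delta^+)$-saturation, or else that the splitting index must stabilize below $\delta$, contradicting the assumption that $p$ splits over every $A_i$. Making the counting precise — tracking that the accumulated tameness-witnesses stay below the saturation threshold while the separated types genuinely multiply — is the delicate step, and it is exactly here that the interplay of the two choices of $\theta$, the regularity of both $\chi$ and $\delta$, and the hypothesis $\chi \le \|M\|$ must all be used simultaneously. I expect the amalgamationless setting to make this more subtle than in \cite{stab-spec-jml}, since each application of tameness and local saturation must be checked to remain inside $M$ rather than in a freely chosen amalgam.
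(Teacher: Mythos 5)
There is a genuine gap. You have assembled the right ingredients --- argue by contradiction, use $(<\chi)$-tameness to shrink each splitting witness $f_i : M_i^1 \cong_{A_i} M_i^2$ to sets $A_i^1, A_i^2$ of size $<\chi$ with $f_i(p\rest A_i^1) \neq p\rest A_i^2$, and use local saturation to realize types inside $M$ --- but the engine you propose for the contradiction does not work. Your plan is to iterate the splitting to produce either $\chi^+$-many distinct types over a small set (``contradicting local $(\chi+\delta^+)$-saturation'') or a stabilization of the splitting index. Local saturation is a \emph{realization} hypothesis, not a stability hypothesis: it says types over small subsets realized in extensions of $M$ are realized in $M$, and it places no bound on the \emph{number} of types over a small set. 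So the first branch of your dichotomy proves nothing, and the second branch is exactly the conclusion you are trying to establish. The pressing-down/club machinery and the tree of automorphisms are not needed and would not close the argument.

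The actual proof is a single non-iterative step. Set $A := \bigcup_{i<\delta}(A_i^1 \cup A_i^2)$; then $|A| < \chi$ if $\delta < \chi$ and $|A| \le \delta$ otherwise, so in either case $M$ is locally $|A|^+$-saturated and $p\rest A$ is realized by a single element $a \in M$. Since the chain $\seq{A_i : i \le \delta}$ is continuous with $A_\delta = M$ and $\delta$ is a limit ordinal, $a \in A_i$ for some $i < \delta$. Because $f_i$ fixes $A_i$ pointwise, $f_i(a) = a$, and hence (extending $f_i$ to an isomorphism $g_i : M \cong M'$) one computes $f_i(p\rest A_i^1) = \gtp(a/A_i^2; M')$; since $M_i^2 \lea M$ and $M_i^2 \lea M'$ with $a \in A_i \subseteq M_i^2$, this equals $\gtp(a/A_i^2; M) = p\rest A_i^2$, contradicting the choice of $A_i^1, A_i^2, f_i$. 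The point you are missing is that the realization $a$ is a \emph{single element}, so it falls into some $A_i$ and is therefore fixed by the very map that was supposed to witness splitting at stage $i$; that observation replaces all of the bookkeeping you anticipate.
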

\begin{proof}
  Suppose not. Then for each $i < \delta$, there exists $M_i^1, M_i^2 \in \K_{<\theta }$ and $f_i : M_i^1 \cong_{A_i} M_i^2$ so that $A_i \subseteq M_i^\ell \lea M$, $\ell = 1,2$ and $f_i (p \rest M_i^1) \neq p \rest M_i^2$. 

  By $(<\chi)$-tameness, we can find $A_i^1 \subseteq M_i^1$ of cardinality strictly less than $\chi$ so that $f_i (p \rest A_i^1) \neq p \rest A_i^2$ (we have set $A_i^2 := f_i[A_i^1])$.

  Let $A := \bigcup_{i < \delta} (A_i^1 \cup A_i^2)$. Recall that $p$ is realized in an extension of $M$. Moreover, if $\delta < \chi$ then $|A| < \chi$ and if $\delta \ge \chi$ then $|A| \le \delta$. In either case, $M$ is locally $|A|^+$-saturated, so $p \rest A$ is realized in $M$, say by $a$. Since $\delta$ is a limit ordinal, there exists $i < \delta$ such that $a \in A_i$. Now, fix an extension $g_i : M \cong M'$ of $f_i$ (so $M_i^2 \lea M'$). We have:

  $$
  f_i (p \rest A_i^1) = g_i (\gtp (a / A_i^1; M)) = \gtp (g_i (a) / A_i^2; M') = \gtp (a / A_i^2; M')
  $$

  Where we have used that $g_i$ fixes $A_i$, so $g_i (a) = a$. On the other hand, since $A_i^2 \subseteq A$,

  $$
  p \rest A_i^2 = \gtp (a / A_i^2; M)
  $$

  Finally, observe that $M_i^2 \lea M$ and $M_i^2 \lea M'$ by construction. Therefore since $a \in A_i \subseteq M_i^2$, $\gtp (a / A_i^2; M') = \gtp (a / A_i^2; M_i^2) = \gtp (a / A_i^2; M)$. We have shown that $f_i (p \rest A_i^1) = p \rest A_i^2$, contradicting the definition of $A_i^1, A_i^2$, and $f_i$.
\end{proof}

We now generalize the weak uniqueness and extension properties of splitting first isolated by VanDieren \cite[I.4.10]{vandierennomax}.

\begin{lem}[Weak uniqueness]\label{weak-uq}
  Let $M_0 \lea M_1 \lea N$ all be in $\K_{\ge \LS (\K)}$. Let $p, q \in \gS (N)$. If $M_1$ is locally universal over $M_0$, $p$ and $q$ both do not split over $M_0$, and $p \rest M_1 = q \rest M_1$, then $p \rest A = q \rest A$ for any $A \subseteq N$ with $|A| \le \|M_0\|$.
\end{lem}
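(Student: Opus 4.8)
The plan is to reduce the statement about the arbitrary small set $A$ to a single small submodel of $N$, and then play non-splitting off against local universality. First I would unpack the hypothesis ``$p$ and $q$ do not split over $M_0$''. Since $M_0 \in \K_{\ge \LS (\K)}$, we have $\|M_0\| \ge \aleph_0$, so $|M_0| + \aleph_0 = \|M_0\|$, and the phrase means precisely that $p, q$ do not $(<\|M_0\|^+)$-split over $M_0$. Concretely: whenever $M', M'' \in \K_{\le \|M_0\|}$ satisfy $M_0 \lea M' \lea N$, $M_0 \lea M'' \lea N$, and $g : M' \cong_{M_0} M''$, then $g (p \rest M') = p \rest M''$, and likewise for $q$.

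Next I would absorb $A$ into a small model. Given $A \subseteq N$ with $|A| \le \|M_0\|$, apply the Löwenheim-Skolem-Tarski axiom to the set $M_0 \cup A$ to get $N_0 \lea N$ with $M_0 \cup A \subseteq N_0$ and $\|N_0\| \le |M_0 \cup A| + \LS (\K) = \|M_0\|$ (using $|A| \le \|M_0\|$ and $\LS (\K) \le \|M_0\|$). By the coherence axiom, $M_0 \lea N_0$ (as $M_0 \subseteq N_0 \lea N$ and $M_0 \lea N$). Now $M_0 \lea N_0 \lea N$, $M_1 \lea N$, and $\|N_0\| \le \|M_0\| < \|M_0\|^+$, so the local universality of $M_1$ over $M_0$ (i.e.\ local $\|M_0\|^+$-universality) yields a $\K$-embedding $h : N_0 \xrightarrow[M_0]{} M_1$. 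Write $N_0' := h[N_0]$, so $h : N_0 \cong_{M_0} N_0'$, and $N_0' \lea M_1 \lea N$ with $\|N_0'\| = \|N_0\| \le \|M_0\|$.

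The heart of the argument is then a short chain of equalities. Since $N_0$ and $N_0'$ are both $\lea N$, both contain $M_0$, both have size $\le \|M_0\|$, and $h : N_0 \cong_{M_0} N_0'$, non-splitting of $p$ over $M_0$ gives $h (p \rest N_0) = p \rest N_0'$, and similarly $h (q \rest N_0) = q \rest N_0'$. Because $N_0' \lea M_1$ and $p \rest M_1 = q \rest M_1$ by hypothesis, restricting to $N_0'$ gives $p \rest N_0' = q \rest N_0'$. Combining, $h (p \rest N_0) = h (q \rest N_0)$, and applying the type map induced by $h^{-1}$ yields $p \rest N_0 = q \rest N_0$. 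Finally, since $A \subseteq N_0$, restricting this equality of types over $N_0$ down to $A$ gives $p \rest A = q \rest A$, as desired.

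I do not expect a serious conceptual obstacle here; the argument is routine once the setup is in place. The point requiring the most care is purely bookkeeping: one must check that the single model $N_0$ (and its image $N_0'$) simultaneously lands in the size range demanded by the definition of non-splitting over $M_0$ (size $\le \|M_0\|$, submodels of $N$ containing $M_0$) and by the definition of local universality of $M_1$ over $M_0$ (size $< \|M_0\|^+$). This is exactly why the hypotheses $M_0 \in \K_{\ge \LS (\K)}$ and $|A| \le \|M_0\|$ are present, and it is the only place where the choice of $\theta = \|M_0\|^+$ in ``locally universal'' must be matched against the $(<\|M_0\|^+)$-splitting notion.
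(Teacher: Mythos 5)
Your proof is correct and follows essentially the same route as the paper's: absorb $A$ into a small $N_0 \lea N$ containing $M_0$, embed $N_0$ into $M_1$ over $M_0$ by local universality, and use nonsplitting of both $p$ and $q$ together with $p \rest M_1 = q \rest M_1$ to conclude $p \rest N_0 = q \rest N_0$. Your version is in fact slightly more careful than the paper's (making explicit that $M_0 \subseteq N_0$ and checking the cardinality bookkeeping), but there is no substantive difference.
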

\begin{proof}
  Fix $A \subseteq N$ with $|A| \le \|M_0\|$. Pick $N_0 \lea N$ with $A \subseteq N_0$ and $\|N_0\| \le \|M_0\|$. Since $M_1$ is locally universal over $M_0$, there exists $f: N_0 \xrightarrow[M_0]{} M_1$. By the definition of nonsplitting (where $M_1, M_2$ there stand for $N_0$, $f[N_0]$ here), we must have that $f (p \rest N_0) = p \rest f[N_0]$ and $f (q \rest N_0) = q \rest f[N_0]$. Since $p \rest M_1 = q \rest M_1$, $f \rest f[N_0] = q \rest f[N_0]$. Therefore $f (p \rest N_0) = f (q \rest N_0)$, hence $p \rest N_0 = q \rest N_0$.
\end{proof}

\begin{lem}[Weak extension]\label{weak-ext}
  Let $M_0 \lea M_1 \lea M$ all be in $\K_{\ge \LS (\K)}$ and let $p \in \gS (M)$. Let $N_0 \in \K$ be such that $M_1 \lea N_0$. If:

  \begin{enumerate}
  \item $p$ does not split over $M_0$.
  \item $N_0$ embeds into $M$ over $M_1$.
  \end{enumerate}
  
  Then there exists $q_0 \in \gS (N_0)$ such that $q_0$ extends $p \rest M_1$ and $q_0$ does not split over $M_0$. Moreover, if $p$ is not algebraic then $q_0$ can be taken to be nonalgebraic.
\end{lem}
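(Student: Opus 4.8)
The plan is to transport $p$ back along the embedding supplied by hypothesis (2). Fix a $\K$-embedding $f : N_0 \to M$ fixing $M_1$ pointwise, and set $M' := f[N_0]$. Since $f$ fixes $M_1$ and is a $\K$-embedding, $M_1 \lea M' \lea M$. I would then define $q_0 := f^{-1}(p \rest M') \in \gS (N_0)$, the preimage under $f$ of the restriction of $p$ to $M'$. This is the only natural candidate, and the content of the lemma is checking the three required properties. The first is immediate: since $f$ restricts to the identity on $M_1$, so does $f^{-1}$, hence $q_0 \rest M_1 = (p \rest M') \rest M_1 = p \rest M_1$.

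The main computation is that $q_0$ does not split over $M_0$, and here I would argue exactly as in the proof of Lemma \ref{weak-uq}, transporting splitting witnesses through $f$. Given $M_0 \lea B_1, B_2 \lea N_0$ of size $\le \|M_0\|$ and an isomorphism $g : B_1 \cong_{M_0} B_2$, the composite $h := f \circ g \circ (f \rest B_1)^{-1} : f[B_1] \cong_{M_0} f[B_2]$ is again an isomorphism fixing $M_0$ (using that $f$, and hence $f^{-1}$, fixes $M_0 \lea M_1$). Since $f[B_\ell] \lea M$ contains $M_0$ and has size $\le \|M_0\|$, nonsplitting of $p$ over $M_0$ gives $h(p \rest f[B_1]) = p \rest f[B_2]$. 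Pulling this back through the bijection on types induced by the isomorphism $f : N_0 \cong M'$ (using $f(q_0 \rest B_\ell) = p \rest f[B_\ell]$ and the identity $f \circ g = h \circ f$ on $B_1$) yields $g(q_0 \rest B_1) = q_0 \rest B_2$, as needed. This bookkeeping with the three maps $f, g, h$ is the most technical part of the argument, though it is routine once the diagram is set up correctly; this is where I would expect a reader to have to be careful.

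For the \emph{moreover} clause, the point I would stress — and the one subtlety that might look like a genuine obstacle but is not — is that nonalgebraicity of a type is automatically preserved under restricting its base, with no need for amalgamation or tameness. If $p$ is nonalgebraic, choose a representative $\gtp (a/M; M^*)$ with $a \notin M$; I claim no element of $M'$ realizes $p \rest M' = \gtp (a/M'; M^*)$. The key observation is that $\Eat$ cannot move the distinguished element out of, or into, the common base: if $(b_1, A, N_1) \mathrel{\Eat} (b_2, A, N_2)$ and $b_1 \in A$, then the witnessing embeddings fix $A$, so $b_1 = f(b_1) = g(b_2)$ and also $b_1 = g(b_1)$, whence $b_2 = b_1 \in A$. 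Passing to the transitive closure $E$, every representative of an algebraic type over $A$ has its distinguished element equal to a single fixed member of $A$. Since $a \notin M' \subseteq M$, no representative of $p \rest M'$ places its element in $M'$, so $p \rest M'$ is nonalgebraic; as $f$ is an isomorphism $N_0 \cong M'$ it preserves this, so $q_0 = f^{-1}(p \rest M')$ is nonalgebraic as well. Thus the same $q_0$ already witnesses the moreover clause and no modification of the construction is required.
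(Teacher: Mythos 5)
Your proof is correct and follows exactly the paper's argument: fix $f: N_0 \xrightarrow[M_1]{} M$, set $q_0 := f^{-1}(p \rest f[N_0])$, and verify the properties by invariance. The paper simply compresses your detailed transport of splitting witnesses into the phrase ``by invariance'' and handles the moreover clause by the contrapositive of the same algebraicity-preservation fact you prove.
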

\begin{proof}
  Use the hypothesis to fix $f: N_0 \xrightarrow[M_1]{} M$. Let $q_0 := f^{-1} (p \rest f[N_0])$. By invariance, $q_0$ does not split over $M_0$ and $q_0 \rest M_1 = p \rest M_1$. To see the moreover part, assume that $q_0$ is algebraic. Then $p \rest f[N_0]$ is algebraic, hence $p$ is algebraic.
\end{proof}

We now work toward improving the statement of weak extension. Roughly, we would like to prove that if $M$ is model-homogeneous and $N$ is an extension of $M$ of the same cardinality, then types over $M$ have nonsplitting extensions over $N$. This is not immediate from Lemma \ref{weak-ext} because without amalgamation we do not know whether $N$ embeds into $M$. Instead, we will first build small approximations of the type we want to build, and then use tameness and existence of coherent sequences of types to take the limit of these approximations. The following technical definition will be key (one can think of it as a replacement for the notion of a ``good saturated model'' -- or limit model -- in this context, see Remark \ref{nice-rmk}):

\begin{defin}\label{nice-def}
  Let $\K$ be an AEC. We call $N \in \K$ \emph{nicely filtrable over $M$} if there exists a limit ordinal $\delta$ and an increasing continuous chain $\bar{N} = \seq{N_i : i < \delta}$ such that:

  \begin{enumerate}
  \item $N = \bigcup_{i < \delta} N_i$, $N_0 = M$.
  \item $\|N_i\| < \|N\|$ for all $i < \delta$.
  \item $N_{i + 1}$ is locally universal over $N_i$ for all $i < \delta$.
  \item $\bar{N}$ is local and coherent (Definition \ref{local-def}).
  \item For any $p \in \gS (\bigcup_{i < \delta} N_i)$, there exists $i < \delta$ so that $p$ does not split over $N_i$.
  \end{enumerate}

  We call $\seq{N_i : i < \delta}$ a \emph{nice filtration of $N$ (over $M$)}.
\end{defin}
\begin{remark}\label{nice-rmk}
  It is not clear whether in general if $N$ is nicely filtrable over $M$, then $N$ is nicely filtrable over any $M'$ with $\|M'\| < \|N\|$, although this seems to happen in reasonable cases. For example, if $\K$ is the class of models of a superstable first-order theory $T$ (ordered by elementary substructure), then a model $N$ is nicely filtrable (over some base) if and only if it is saturated. This holds even if $T$ is only stable with $\cf{\|N\|} \ge \kappa (T)$, and generalizes (in high-enough cardinals) to tame AECs with amalgamation, see for example \cite[\S6]{stab-spec-jml}.
\end{remark}

From categoricity in a suitable unbounded set of cardinals below a strong limit of countable cofinality $\mu$, we can get nice filtrations. Note that Lemma \ref{limit-existence} tells us that from the hypotheses of Lemma \ref{nice-resolv-lem} there will be a model in $\K_\mu$.

\begin{lem}\label{nice-resolv-lem}
  Let $\K$ be an AEC and let $\mu > \LS (\K)$ be a strong limit cardinal of countable cofinality. If:

  \begin{enumerate}
  \item For any $\theta < \mu$ there exists $\lambda < \mu$ such that $\lambda = \lambda^\theta$ and $\K$ is categorical in $\lambda$.
  \item $\K_{<\mu}$ is $\LS (\K)$-tame.
  \item $\K_{<\mu}$ has coherent sequences.
  \end{enumerate}

  Then for any $N \in \K_{\mu}$ and any $M \in \K_{[\LS (\K),\mu)}$ with $M \lea N$, $N$ is nicely filtrable over $M$.
\end{lem}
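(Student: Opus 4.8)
The plan is to exploit that $\mu$ has countable cofinality and build a \emph{countable} nice filtration $\bar N = \seq{N_n : n < \omega}$ of $N$ over $M$, with $N_0 = M$ and all $N_n \lea N$ of size $< \mu$. The key simplification is that for $\delta = \omega$ there are no limit ordinals below $\delta$, so continuity in clause (1) is automatic, the locality requirement in clause (4) is vacuous, and the coherence requirement in clause (4) follows for free from hypothesis (3): the chain $\bar N$ lives in $\K_{<\mu}$ and $\K_{<\mu}$ has coherent sequences. Thus the real content is in producing, at successors, a model $N_{n+1} \lea N$ of size $<\mu$ that is locally universal over $N_n$ (clause (3)), while ensuring $\bigcup_n N_n = N$ (clause (1)) and verifying the nonsplitting local character (clause (5)).

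For the construction I would first fix an increasing sequence $\seq{P_n : n<\omega}$ of $\K$-substructures of $N$ of size $<\mu$ with $\bigcup_n P_n = N$ (possible since $\cf{\mu} = \omega$), to use as bookkeeping so that the filtration exhausts $N$. At stage $n$, given $N_n \lea N$ of size $<\mu$ (with $\|N_n\| \ge \LS (\K)$, which holds since $N_0 = M \in \K_{[\LS (\K),\mu)}$), I would use hypothesis (1) to choose a categoricity cardinal $\lambda_n < \mu$ with $\lambda_n = \lambda_n^{\|N_n\|}$ and $\lambda_n \ge \|N_n\| + \|P_n\|$, and then pick $N_{n+1} \lea N$ with $N_n \cup P_n \subseteq N_{n+1}$ and $\|N_{n+1}\| = \lambda_n$ (enlarge a Löwenheim--Skolem hull if necessary, which is possible as $\|N\| = \mu > \lambda_n$). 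Then $N_n \lea N_{n+1}$ by coherence, $\|N_{n+1}\| = \lambda_n < \mu$ gives clause (2), and $\bigcup_n N_n \supseteq \bigcup_n P_n = N$ gives clause (1).

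The crucial point is \emph{why} $N_{n+1}$ is locally universal over $N_n$, and here I would avoid amalgamation entirely by transferring model-homogeneity through categoricity. By Fact \ref{mh-dense} applied with $\theta := \|N_n\|^+$ (which is regular and $>\LS (\K)$), some model in $\K_{\lambda_n}$ is locally $\|N_n\|^+$-model-homogeneous, since the extension it produces has size at most $\lambda_n^{<\|N_n\|^+} = \lambda_n^{\|N_n\|} = \lambda_n$. By categoricity in $\lambda_n$ this isomorphism-invariant property passes to \emph{every} model of size $\lambda_n$, in particular to $N_{n+1}$. Taking the base to be $N_n$ (which satisfies $N_n \lea N_{n+1}$ and $\|N_n\| < \|N_n\|^+$) shows $N_{n+1}$ is locally $\|N_n\|^+$-universal over $N_n$, i.e. locally universal over $N_n$; this is clause (3). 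For clause (5) I would invoke the local character Lemma \ref{splitting-lc} with $M := N$, $\delta := \omega$, $A_n := N_n$, and $\theta := \sup_n \|N_n\|^+$. Since the $\|N_n\|$ are cofinal in the limit cardinal $\mu$, we have $\theta = \mu$, so $\K_{<\theta} = \K_{<\mu}$ is $(<\LS (\K)^+)$-tame by hypothesis (2); setting $\chi := \LS (\K)^+$, the only remaining requirement is that $N$ be locally $(\chi + \omega_1)$-saturated. This I would get from Lemma \ref{categ-mh}, whose hypotheses are exactly hypothesis (1): every model in $\K_\mu$, in particular $N$, is locally model-homogeneous, and a short direct argument (take a small $M_0 \lea N$ containing the parameter set, a small $N_0 \lea N'$ containing a realization, and embed $N_0$ into $N$ over $M_0$ by model-homogeneity) shows local model-homogeneity implies local saturation. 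As $\chi + \omega_1 < \mu$ (since $\mu$ is a strong limit above $\LS (\K)$), Lemma \ref{splitting-lc} yields, for each $p \in \gS (N)$, some $n$ with $p$ not $(<\mu)$-splitting over $N_n$; as $\|N_n\|^+ < \mu$, this means $p$ does not split over $N_n$, which is clause (5).

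The main obstacle is clause (3): without amalgamation one cannot build universal extensions by the usual back-and-forth, and it is unclear that an externally constructed model-homogeneous extension of $N_n$ would embed into $N$. The resolution is the observation above that local model-homogeneity is an \emph{intrinsic}, isomorphism-invariant property of $N_{n+1}$ that already entails local universality over its own submodels; categoricity at a cardinal closed under $\|N_n\|$-th powers then lets us install this property on a submodel of $N$ of size $\lambda_n$ directly, sidestepping amalgamation. The verification of clauses (1), (2), and (4) is routine given the choices above, and clause (5) is a clean application of the local character lemma once local saturation of $N$ is in hand.
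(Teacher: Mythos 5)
Your proposal is correct and follows essentially the same route as the paper's own proof: a countable chain of submodels of $N$ whose sizes are categoricity cardinals $\lambda_n$ with $\lambda_n = \lambda_n^{\|N_n\|}$, local universality extracted from Fact \ref{mh-dense} plus categoricity (exactly the argument in Lemma \ref{categ-mh}), locality trivial at $\omega$, coherence from the hypothesis, and clause (5) from Lemma \ref{categ-mh} together with Lemma \ref{splitting-lc} applied with $\delta = \omega$ and $\chi = \LS (\K)^+$. The only differences are presentational (explicit bookkeeping to exhaust $N$, and keeping $N_0 = M$ exactly rather than enlarging $M$ first).
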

\begin{proof}
  Let $N \in \K_{\mu}$ and let $M \in \K_{[\LS (\K), \mu)}$ with $M \lea N$. Pick an increasing sequence $\seq{\mu_i : i < \omega}$ cofinal in $\mu$ with $\|M\| \le \mu_0$ and so that for any $i < \omega$, $\mu_{i + 1}^{\mu_i} = \mu_{i + 1}$ and $\K$ is categorical in $\mu_i$. This is possible by assumption. By increasing $M$ if needed, we can assume without loss of generality that $M \in \K_{\mu_0}$. Now pick any increasing sequence $\seq{N_i : i < \omega}$ such that $N_0 = M$, $\bigcup_{i < \omega} N_i = N$, and $N_i \in \K_{\mu_i}$ for all $i < \omega$. We claim this is a nice filtration of $N$ over $M$.

    As in the proof of Lemma \ref{categ-mh}, for any $i < \omega$, $N_{i + 1}$ is locally universal over $N_i$. Also, $\bar{N} = \seq{N_i : i < \omega}$ is trivially local since there are no limit ordinals below $\omega$. Furthermore, $\bar{N}$ is coherent because by assumption $\K_{<\mu}$ has coherent sequences. Finally, pick $p \in \gS (N)$. Note that by Lemma \ref{categ-mh}, $N$ is locally model-homogeneous, hence locally saturated. Thus Lemma \ref{splitting-lc} (where $\delta, \chi$ there stands for $\omega, \LS (\K)^+$ here) implies there exists $i < \omega$ so that $p$ does not split over $N_i$. 
\end{proof}

From the existence of nice filtrations, we can now prove the desired extension property:

\begin{lem}[Extension]\label{ext-lem}
  Let $\K$ be an AEC and let $M_0 \lea M_1 \lea M \lea N$ all be in $\K_{\ge \LS (\K)}$. Let $p \in \gS (M)$. If:

  \begin{enumerate}
  \item $p$ does not split over $M_0$.
  \item $M_1$ is locally universal over $M_0$.
  \item $M$ is $(\|M\|)$-locally universal over $M_1$.
  \item $\|N\| = \|M\|$.
  \item $N$ is nicely filtrable over $M_1$.
  \item $\K_{\le \|N\|}$ is $\|M_0\|$-tame.
  \end{enumerate}

  Then there exists $q \in \gS (N)$ which extends $p$ and does not split over $M_0$. Moreover, $q$ is not algebraic if $p$ is not algebraic.
\end{lem}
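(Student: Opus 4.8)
The plan is to build $q$ as a direct limit of nonsplitting approximations along a nice filtration of $N$, using weak uniqueness together with tameness to force these approximations to cohere. Fix a nice filtration $\bar{N} = \seq{N_i : i < \delta}$ of $N$ over $M_1$ (Definition \ref{nice-def}), so that $N_0 = M_1$, $\|N_i\| < \|N\| = \|M\|$ for all $i$, each $N_{i+1}$ is locally universal over $N_i$, the chain $\bar{N}$ is local and coherent, and every type over $N$ fails to split over some $N_i$.

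First I would produce the approximations. For each $i < \delta$, since $\|N_i\| < \|M\|$ and $M_1 \lea N_i \lea N$ with $M \lea N$, hypothesis (3) lets me embed $N_i$ into $M$ over $M_1$, so Lemma \ref{weak-ext} yields $q_i \in \gS (N_i)$ extending $p \rest M_1$ and not splitting over $M_0$ (nonalgebraic if $p$ is). Next I would show uniqueness, hence coherence: if two types over $N_i$ extend $p \rest M_1$ and do not split over $M_0$, then by Lemma \ref{weak-uq} (using that $M_1$ is locally universal over $M_0$) they agree on every subset of size $\le \|M_0\|$, so by $\|M_0\|$-tameness (hypothesis (6)) they coincide. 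Uniqueness forces $q_j \rest N_i = q_i$ for $i < j$, since a restriction of a nonsplitting type is nonsplitting; thus $\seq{q_i : i < \delta}$ is an increasing chain of types. As $\bar{N}$ is local and coherent, this chain is coherent, and Fact \ref{local-constr} produces $q \in \gS (N)$ with $q \rest N_i = q_i$ for all $i$.

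The main obstacle is showing that $q$ does not split over $M_0$: because $\delta$ may have countable cofinality, a splitting witness of size $\le \|M_0\|$ for $q$ need not sit inside a single level $N_i$, so non-splitting does not transfer from the $q_i$ by naive localization. The key is the last clause of the nice filtration: by it, $q$ does not split over some $N_{i^*}$. Given a putative splitting witness $f : M_1' \cong_{M_0} M_2'$ for $q$, with $\|M_\ell'\| \le \|M_0\| \le \|N_{i^*}\|$, I would pick $R \lea N$ of size $\|N_{i^*}\|$ containing $N_{i^*} \cup M_1' \cup M_2'$ and, using that $N_{i^*+1}$ is locally universal over $N_{i^*}$, embed $R$ into $N_{i^*+1}$ over $N_{i^*}$ by some $h$. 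Since $q$ does not split over $N_{i^*}$ and $h$ fixes $N_{i^*}$, this gives $h (q \rest R) = q \rest h[R]$; the right-hand side is a restriction of $q_{i^*+1}$ and so does not split over $M_0$. Transporting back along $h^{-1}$, which fixes $M_0$, shows $q \rest R$ does not split over $M_0$, and since $M_1', M_2' \lea R$ this contradicts the choice of witness. Hence $q$ does not split over $M_0$.

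Finally I would verify the remaining claims. That $q$ extends $p$ follows from uniqueness at the top level: both $q \rest M$ and $p$ are types over $M$ extending $p \rest M_1$ and not splitting over $M_0$ (for $q \rest M$, as a restriction of $q$), so Lemma \ref{weak-uq} and tameness give $q \rest M = p$. For nonalgebraicity, if $p$ is nonalgebraic then each $q_i$ is nonalgebraic by Lemma \ref{weak-ext}; were $q$ algebraic, its witnessing element would lie in some $N_i$, making $q_i = q \rest N_i$ algebraic, a contradiction. This completes the plan.
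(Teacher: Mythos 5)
Your proposal is correct and follows essentially the same route as the paper: approximations $q_i$ via weak extension (using hypothesis (3) to embed each $N_i$ into $M$ over $M_1$), coherence via weak uniqueness plus tameness, the limit type via Fact \ref{local-constr}, and the last clause of the nice filtration to get non-splitting over some $N_{i^*}$. The only divergence is in the final Claim, where the paper compares $q \rest N_0'$ with a second weak-extension type $q_0'$ and identifies them by two applications of weak uniqueness, whereas you transport a putative splitting witness into $N_{i^*+1}$ by local universality and invariance; both arguments are valid and rest on the same ingredients.
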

\begin{proof}
  Fix $\bar{N} = \seq{N_i : i < \delta}$ a nice filtration of $N$ over $M_1$. For $i < \delta$, let $q_i \in \gS (N_i)$ be as given by weak extension: it extends $p \rest M_1$ and does not split over $M_0$. Moreover it is nonalgebraic if $p$ is nonalgebraic. By weak uniqueness (and tameness), $q_j \rest N_i = q_i$ for $i < j < \delta$. By Fact \ref{local-constr}, there exists $q \in \gS (N)$ such that $q \rest N_i = q_i$ for all $i < \delta$. Note that $q$ cannot be algebraic if all the $q_i$'s are nonalgebraic. By the properties of $\bar{N}$, there exists $i < \delta$ so that $q$ does not split over $N_i$.

  \underline{Claim}: $q$ does not split over $M_0$.

  \underline{Proof of Claim:} By Remark \ref{spl-witness-rmk}, it suffices to see that $q \rest N_0'$ does not split over $M_0$ for any $N_0' \in \K_{<\|N\|}$ with $N_{i + 1} \lea N_0' \lea N$. So let $q_0' \in \gS (N_0')$ be as given by weak extension (extending $p \rest M_1$ and not splitting over $M_0$). By weak uniqueness, $q_0' \rest N_{i + 1} = q_{i + 1}$. Since $q$ does not split over $N_i$, $q \rest N_0'$ does not split over $N_i$. By monotonicity, also $q_0'$ does not split over $N_i$. By weak uniqueness again (recalling that by definition $N_{i + 1}$ is locally universal over $N_i$), $q_0' = q \rest N_0'$. In particular, $q \rest N_0'$ does not split over $M_0$. $\dagger_{\text{Claim}}$ 

  Now since $q \rest M_1 = p \rest M_1$, weak uniqueness and tameness imply that $q \rest M = p$, as desired.
\end{proof}

\section{Good frames and the main theorem}

We use the tools of the previous section to build a good frame, a local forking-like notion. We assume some familiarity with good frames (see \cite[Chapter 2]{shelahaecbook}) here. We will use the definition and notation from \cite[\S 2.4]{ss-tame-jsl} (which we do not repeat here). Recall that a \emph{$\goodms{S}$} $\lambda$-frame is a good frame, except that it may fail the symmetry property. The following key result tells us that good frame can be transferred up in tame AECs with weak amalgamation:

\begin{fact}\label{frame-upward}
  Let $\s$ be a $\goodms{S}$ $\lambda$-frame on an AEC $\K$. If $\K$ is $\lambda$-tame and has weak amalgamation, then $\s$ is a good $\lambda$-frame and extends to a good $[\lambda, \infty)$-frame on all of $\K_{\ge \lambda}$. In particular, $\K_{\ge \lambda}$ has amalgamation and arbitrarily large models.
\end{fact}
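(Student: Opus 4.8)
The plan is to extend the frame's nonforking relation from $\K_\lambda$ to all of $\K_{\ge \lambda}$ and then verify, by induction on the cardinality of the models involved, that the extended relation satisfies the good frame axioms (including symmetry). For $M_0 \lea M$ in $\K_{\ge \lambda}$ and $p$ in the appropriate class of types over $M$, I would declare that $p$ does not fork over $M_0$ by requiring that $p$, restricted to the $\lambda$-sized submodels of $M$, does not $\s$-fork over some fixed $\lambda$-sized submodel of $M_0$. Invariance, monotonicity, and agreement with $\s$ on $\K_\lambda$ are then immediate from the definition; all the content lies in extension, uniqueness, local character, continuity, and symmetry for this relation.

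The central difficulty, and the only step that genuinely uses weak amalgamation, is propagating \emph{full} amalgamation upward. A $\goodms{S}$ $\lambda$-frame has amalgamation in $\lambda$ by definition, which gives the base case. For the inductive step at $\mu > \lambda$, I would resolve the relevant models into $\lea$-increasing chains of pieces of size $< \mu$, where amalgamation is already available by the induction hypothesis, use density of basic types and the extension property of $\s$-nonforking to extend a given type along the chain, and glue the resulting coherent chain of nonforking extensions into a single type via Fact \ref{local-constr}. This shows that in $\K_\mu$ every type over a model extends to any same-cardinality extension; since weak amalgamation holds in every cardinal (every object of $\K$ is a weak amalgamation base), Fact \ref{type-ext-weak-ap} then upgrades this to amalgamation in $\mu$, and a directed-colimit argument handles the limit stages. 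This bootstrapping of full amalgamation out of weak amalgamation plus the frame is the main obstacle: standard frame-transfer results simply \emph{assume} amalgamation, and without it one cannot even define types and nonforking over larger models in the usual way. Tameness is exactly what makes the gluing legitimate, since a type is determined by its restrictions to $\lambda$-sized submodels, so the small nonforking extensions assemble coherently.

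With amalgamation secured at each level, the remaining axioms follow the established pattern. Uniqueness is where $\lambda$-tameness does its essential work: two nonforking extensions of a type that agree on the base agree on every $\lambda$-sized submodel by the frame's uniqueness, hence are equal by tameness. Extension and continuity come from the same resolve-extend-and-glue construction used above, now with uniqueness guaranteeing coherence of the chains; local character transfers because a forking type witnesses its forking already on a $\lambda$-sized submodel. Finally, symmetry — deliberately omitted from the hypothesis — is recovered from the now-verified uniqueness, extension, and local character of the extended relation by the standard argument deriving symmetry from the remaining axioms in the tame setting, and restricting back to $\K_\lambda$ shows $\s$ itself is a genuine good $\lambda$-frame. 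Once $\s$ is known to be a good $[\lambda,\infty)$-frame, amalgamation in every cardinal $\ge \lambda$ is part of the package, and the extension property yields a proper nonforking (hence nonalgebraic) extension of every model, so $\K_{\ge \lambda}$ has no maximal models and therefore arbitrarily large models.
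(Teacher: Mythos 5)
The statement is labeled a Fact for a reason: the paper's proof consists of three citations --- amalgamation above $\lambda$ from \cite[4.16]{ap-universal-apal}, the upward frame transfer from \cite{ext-frame-jml}, and symmetry from \cite[6.14]{ss-tame-jsl} --- and your outline is, in structure and in order, a faithful reconstruction of what those results do: define nonforking over larger models by restriction to $\lambda$-sized submodels, bootstrap full amalgamation out of weak amalgamation via the type-extension characterization (Fact \ref{type-ext-weak-ap}), use tameness for uniqueness and the resolve-extend-glue construction (Fact \ref{local-constr}) for extension and continuity, and recover symmetry last. So the architecture is the right one.

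Two steps are glossed over in a way that matters. First, Fact \ref{type-ext-weak-ap} requires that \emph{every} $p \in \gS (M)$ extend to $\gS (N)$, whereas the machinery you invoke to produce the extension --- local character to find a small base, then unique nonforking extensions marching up the chain --- is axiomatized only for \emph{basic} types. Algebraic types extend trivially, but a nonalgebraic type that is not basic is covered by neither, and ``density of basic types'' only guarantees that \emph{some} element of $N \setminus M$ realizes a basic type over $M$, not that your given type extends. In a type-full frame (which is what Lemma \ref{main-lem-frame} produces, so the paper's main line is safe) this is vacuous, but the Fact as stated admits arbitrary $\goodms{S}$ frames, and the paper's final theorem applies it to a Shelah-style frame that need not be type-full; closing this gap is the real content of \cite[4.16]{ap-universal-apal}. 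Second, symmetry is not recovered ``by the standard argument deriving symmetry from the remaining axioms'': that derivation is a substantial theorem in its own right (\cite[6.14]{ss-tame-jsl}, going through the order property), though the paper notes in a footnote that symmetry is never used here, so this is a presentational rather than a logical issue.
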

\begin{proof}
  By \cite[4.16]{ap-universal-apal}, $\K_{\ge \lambda}$ has amalgamation. By \cite{ext-frame-jml}, $\s$ extends to a $\goodms{S}$ $[\lambda, \infty)$-frame $\ts$ on $\K$. By \cite[6.14]{ss-tame-jsl}, $\ts$, and hence $\s$, also has symmetry\footnote{This will not be used in the present paper.}. The ``in particular'' part follows from the definition of a good $[\lambda, \infty)$-frame.
\end{proof}

For $\K$ an AEC and $M \in \K$, we let $\K_M$ denote the AEC obtained by adding constant symbols for $M$ (see \cite[2.20]{ap-universal-apal} for the precise definition). Roughly speaking, it is the AEC of models above $M$. 

\begin{lem}[Main lemma]\label{main-lem-frame}
  Let $\K$ be an AEC and let $\mu > \LS (\K)$ be a strong limit cardinal of countable cofinality. \emph{If}:
  \begin{enumerate}
  \item $\K_{\mu}$ has weak amalgamation.
  \item $\K_{<\mu}$ has coherent sequences.
  \item $\K_{\le \mu}$ is $\LS (\K)$-tame.
  \item For every $\theta < \mu$, there exists $\lambda < \mu$ such that $\lambda = \lambda^\theta$ and $\K$ is categorical in $\lambda$.
  \end{enumerate}

  \emph{Then} for any non-maximal $M \in \K_\mu$, $\K_M$ has a type-full $\goodms{S}$ $\mu$-frame.
\end{lem}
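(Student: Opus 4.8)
**

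The goal is to verify each of the defining properties of a type-full $\goodms{S}$ $\mu$-frame for $\K_M$, drawing on the machinery built in Sections 3 and 4. I need to recall the structure of a good frame: there is a class of basic types (here type-full means all nonalgebraic types are basic), a notion of nonforking/independence, and a list of axioms (existence of a model, amalgamation, joint embedding, no maximal models in $\mu$, stability, density of basic types, local character, transitivity, uniqueness, extension, continuity — everything except symmetry, which the $\goodms{S}$ qualifier permits us to drop).

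\begin{proof}[Proof proposal]
The plan is to take $\s$ to be the frame on $\K_M$ whose underlying forking notion is \emph{nonsplitting over a small submodel}, with the basic types being all the nonalgebraic types, and then verify the $\goodms{S}$ frame axioms one at a time using the results of the previous sections. First I would record the structural prerequisites. By Lemma \ref{limit-existence}, $\K_\mu \neq \emptyset$, and since $M$ is assumed non-maximal and $\K_\mu$ is categorical (it is a categoricity cardinal by hypothesis (4) applied with a cofinal sequence — more carefully, $\mu$ being a strong limit of countable cofinality lets us build a model in $\K_\mu$ and deduce no maximal models in $\mu$ as in Corollary \ref{basic-existence-cor}), $\K_M$ restricted to $\mu$ has amalgamation-like behavior through weak amalgamation (hypothesis (1)). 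Crucially, Lemma \ref{nice-resolv-lem} applies: its hypotheses (categoricity in a suitable unbounded set, $\LS(\K)$-tameness, coherent sequences) are exactly our hypotheses (2)--(4), so every $N \in \K_\mu$ above a base $M_1 \in \K_{[\LS(\K),\mu)}$ is nicely filtrable over $M_1$. Lemma \ref{categ-mh} gives that every object of $\K_\mu$ is locally model-homogeneous, hence locally saturated, which is the key ``richness'' I will invoke repeatedly.

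Next I would verify the forking axioms. Stability in $\mu$ should follow from the local character lemma (Lemma \ref{splitting-lc}) together with a counting argument using the categoricity cardinals below $\mu$: since every type over a model in $\K_\mu$ does not split over some small submodel, and there are few small submodels and few nonsplitting extensions (by weak uniqueness, Lemma \ref{weak-uq}), the number of types is controlled. Density of basic types is immediate because the frame is type-full (every nonalgebraic type is basic). Local character is precisely Lemma \ref{splitting-lc} (or its packaging in the definition of nicely filtrable, clause (5)). For the uniqueness property I would use weak uniqueness (Lemma \ref{weak-uq}) combined with $\LS(\K)$-tameness to upgrade agreement on small sets to agreement on the whole model. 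The extension property is the heart of the matter and is supplied by Lemma \ref{ext-lem}: given a nonsplitting type over $M'$ and an extension $N \supseteq M'$ of the same cardinality $\mu$, that lemma produces a nonsplitting extension to $N$ — its hypotheses (local universality, nice filtrability, tameness) are met because of Lemma \ref{nice-resolv-lem} and Lemma \ref{categ-mh}. Transitivity and continuity of nonforking should follow from monotonicity of nonsplitting and the coherence/locality built into nice filtrations.

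The main obstacle I expect is the extension and uniqueness properties working together \emph{in the correct cardinality and over the correct bases}. The subtlety is that nonsplitting is a well-behaved independence notion only over sufficiently saturated (here, locally model-homogeneous) bases, and the extension lemma requires the target $N$ to be nicely filtrable over an intermediate $M_1$ that is locally universal over the true base $M_0$. Threading these hypotheses so that the frame axioms — which quantify over \emph{all} models of size $\mu$ in $\K_M$ — are uniformly satisfied will require care: I would need to show that any type over a model in $(\K_M)_\mu$ does not split over some small submodel, choose that submodel to sit inside a locally universal chain, and check that the extension produced by Lemma \ref{ext-lem} is unique via Lemma \ref{weak-uq} and tameness. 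A secondary difficulty is confirming that the no-maximal-models and amalgamation axioms of the frame hold in $\mu$ itself (as opposed to merely below or above); here I would lean on categoricity in $\mu$, weak amalgamation (hypothesis (1)), and the non-maximality of $M$ together with the existence results of Section 3. Symmetry, of course, is not needed, which is exactly why the conclusion is stated for a $\goodms{S}$ frame rather than a full good frame.
\end{proof}
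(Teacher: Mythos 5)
Your overall strategy---define nonforking as ``nonsplitting over some submodel of size $<\mu$,'' take the basic types to be the nonalgebraic ones, and verify the frame axioms using Lemmas \ref{categ-mh}, \ref{splitting-lc}, \ref{weak-uq}, \ref{ext-lem} and \ref{nice-resolv-lem}---is exactly the paper's. However, there are two concrete gaps. First, you assert that ``$\K_\mu$ is categorical'' and invoke Corollary \ref{basic-existence-cor} and the Section \ref{uic-sec} existence results to get no maximal models in $\mu$. Neither is available: hypothesis (4) only gives categoricity in an \emph{unbounded set} of cardinals below $\mu$, not in $\mu$ itself nor in every cardinal of $[\LS(\K),\mu)$, which is what Theorem \ref{basic-existence-thm} and Corollary \ref{basic-existence-cor} require. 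The paper instead gets ``no maximal models in $(\K_M)_\mu$'' purely from the frame machinery: $M$ is non-maximal, so there is a nonalgebraic $p\in\gS(M)$; the extension property produces a nonalgebraic nonforking extension over any $N\gea M$ in $\K_\mu$, so $N$ is not maximal. Similarly, amalgamation in $\mu$ comes from Fact \ref{type-ext-weak-ap} (weak amalgamation plus the type-extension property implies amalgamation), not from categoricity in $\mu$.

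Second, your claim that the frame's local character ``is precisely Lemma \ref{splitting-lc} (or clause (5) of nice filtrability)'' does not go through as stated. The frame axiom quantifies over increasing continuous chains $\seq{N_i : i\le\delta}$ of models all of cardinality $\mu$, and you must produce $i<\delta$ such that $p$ does not split over some submodel of $N_i$ of size $<\mu$. Applying Lemma \ref{splitting-lc} directly to the chain $\seq{N_i}$ only yields nonsplitting over some $N_i$ itself (a model of size $\mu$), which is not nonforking in the defined sense; and clause (5) of Definition \ref{nice-def} only concerns filtrations by models of size $<\mu$. The paper handles this with an extra construction: assuming the conclusion fails, it builds auxiliary increasing chains $\seq{N_i^0 : i<\delta}$, $\seq{N_i^1 : i<\delta}$ in $\K_\lambda$ for a suitable categoricity cardinal $\lambda<\mu$ with $\lambda^\theta=\lambda$, arranges $N_\delta^0=N_\delta^1$, and then applies Lemma \ref{splitting-lc} to that small chain to reach a contradiction. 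You would need to supply this (or an equivalent) argument; flagging ``threading the hypotheses will require care'' does not substitute for it. The remaining items (uniqueness, extension, stability, transitivity/continuity following formally from the other axioms) match the paper's proof.
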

\begin{proof}
  For $N_1 \lea N_2$ both in $\K_\mu$ and $p \in \gS (N_2)$, we say that $p$ \emph{does not fork over $N_1$} if there exists $N_1^0 \in \K_{<\mu}$ with $N_1^0 \lea N_1$ such that $p$ does not split over $N_1^0$. We prove several claims:

  \begin{itemize}
  \item Nonforking is invariant under isomorphisms, and if $N_1 \lea N_2 \lea N_3$ are all in $\K_\mu$ and $p \in \gS (N_3)$ does not fork over $N_1$, then $p \rest N_2$ does not fork over $N_1$ and $p$ does not fork over $N_2$. This is immediate from the definition and the basic properties of splitting.
  \item If $N_1 \lea N_2$ are both in $\K_\mu$, $p, q \in \gS (N_2)$ both do not fork over $N_1$ and $p \rest N_1 = q \rest N_1$, then $p = q$. To see this use the Löwenheim-Skolem axiom, fix $N_1^0 \lea N_1$ such that $N_1^0 \in \K_{<\mu}$ and both $p$ and $q$ do not split over $N_1^0$. By Lemma \ref{categ-mh}, all the models in $\K_\mu$ are locally model-homogeneous, so in particular $N_1$ is locally universal over $N_1^0$, so by tameness and weak uniqueness (Lemma \ref{weak-uq}), $p = q$.
  \item If $N \in \K_\mu$ and $p \in \gS (N)$, then $p$ does not fork over $N$. This follows directly from the definition of nice filtrations and Lemma \ref{nice-resolv-lem}.
  \item If $N_1 \lea N_2$ are both in $\K_\mu$ and $p \in \gS (N_1)$, then there exists $q \in \gS (N_2)$ such that $q$ extends $p$ and $q$ does not fork over $N_1$. Moreover, $q$ can be taken to be nonalgebraic if $p$ is nonalgebraic. To see this, first note that we have observed previously that $p$ does not fork over $N_1$, hence there is $M_0 \in \K_{[\LS (\K), \mu)}$ such that $M_0 \lea N_1$ and $p$ does not split over $M_0$. Now we can pick $M_1 \lea N_1$ with $M_0 \lea M_1$, $\|M_0\| < \|M_1\| < \|N_1\|$, and $\|M_1\|^{\|M_0\|} = \|M_1\|$, and $\K$ is categorical in $\|M_1\|$. Then it follows that $M_1$ is locally universal over $M_0$ (see Fact \ref{mh-dense}). Also, Lemma \ref{nice-resolv-lem} implies that $N_1$ is nicely filtrable over $M_1$. Now apply Lemma \ref{ext-lem}, where $M, N$ there stand for $N_1, N_2$ here.
  \item If $\delta < \mu^+$ is a limit ordinal, $\seq{N_i : i \le \delta}$ is increasing continuous in $\K_\mu$, and $p \in \gS (N_\delta)$, then there exists $i < \delta$ such that $p$ does not fork over $N_i$. Indeed, suppose not. Assume without loss of generality that $\delta$ is regular. In particular, $\delta < \mu$ and so $\delta^+ < \mu$. Let $\theta := \LS (\K)^+ + \delta^+$. Pick $\lambda < \mu$ such that $\K$ is categorical in $\lambda$ and $\lambda^\theta = \lambda$. By Fact \ref{mh-dense}, the model in $\K_\lambda$ is locally $\theta^+$-model-homogeneous. Build $\seq{N_i^0 : i < \delta}$ and $\seq{N_i^1 : i < \delta}$ increasing in $\K_{\lambda}$ such that for all $i < \delta$, $N_i^0 \lea N_i$, $N_i^0 \lea N_i^1 \lea N_\delta$, $N_{i}^1 \cap N_i \subseteq N_{i + 1}^0$,  $p \rest N_i^1$ splits over $N_i^0$. This is possible (see \cite[4.11]{ss-tame-jsl} for a very similar construction). At the end, let $N_\delta^\ell := \bigcup_{i < \delta} N_i^\ell$. Observe that $N_\delta^1 = N_\delta^0$ so by Lemma \ref{splitting-lc}, there exists $i < \delta$ so that $p \rest N_\delta^1$ does not split over $N_i^0$. This is a contradiction, since we assumed that $p \rest N_i^1$ splits over $N_i^0$.
  \item For any $N \in \K_\mu$, $|\gS (N)| \le \mu$. Indeed, if $\seq{p_i : i < \mu^+}$ are types in $\gS (N)$, we can first use Lemma \ref{nice-resolv-lem} to find a nice filtration $\seq{N_j : j < \omega}$ of $N$. In particular, for all $i < \mu^+$ there exists $j_i < \omega$ with $p$ not splitting over $N_{j_i}$. By the usual pruning argument (using that $\mu$ is strong limit to see that $|\gS (N_j)| < \mu$ for each $j < \mu$), there exists $j < \omega$ and an unbounded subset $X \subseteq \mu^+$ such that for $i_1, i_2 \in X$, $p_{i_1}$, $p_{i_2}$ both do not split over $N_j$ and have the same restriction to $N_{j + 1}$. By weak uniqueness, $p_{i_1} = p_{i_2}$. This shows that $|\gS (N)| \le \mu$.
  \end{itemize}

  Now fix a non-maximal $M \in \K_\mu$. We identify any $N \in \K_\mu$ such that $M \lea N$ with the corresponding member of $\K_M$ (this will not yield to confusion). We define nonforking in the frame using the nonforking relation defined above. The basic types are the nonalgebraic types. We have just established that invariance, monotonicity, uniqueness, extension, and local character hold in the frame. Therefore transitivity and continuity automatically follow (see \cite[II.2.17,II.2.18]{shelahaecbook}). Also note that $\K_M$ is not empty (it contains a copy of $M$) and has no maximal models of cardinality $\mu$: given $M \lea N$ in $\K_\mu$, we can fix a nonalgebraic $p \in \gS (M)$ (which exists as $M$ is not maximal) and take its nonforking extension to $\gS (N)$. We have observed this extension is not algebraic, hence $N$ cannot be maximal either. We have also seen that $\K$ (hence $\K_M$) is stable in $\mu$. Finally, $\K$ has amalgamation in $\mu$. This follows from Fact \ref{type-ext-weak-ap} and the extension property of nonforking. We deduce that $\K_M$ has amalgamation in $\mu$ and also joint embedding in $\mu$.
\end{proof}
\begin{remark}
  By Lemma \ref{limit-existence}, any AEC satisfying the hypotheses of Lemma \ref{main-lem-frame} will have a model in $\K_\mu$, but we do not in general know how to find a non-maximal one. Theorem \ref{basic-existence-thm} gives a way by assuming categoricity in more cardinals. 
\end{remark}

We deduce arbitrarily large models from categoricity in enough small cardinals:

\begin{thm}\label{arb-large-thm}
  Let $\K$ be an $\LS (\K)$-tame AEC with weak amalgamation and coherent sequences, and let $\mu > \LS (\K)$ be a strong limit cardinal. \emph{If}:

  \begin{enumerate}
  \item Not every element of $\K_{\mu}$ is maximal.
  \item For every $\theta < \mu$ there exists $\lambda < \mu$ such that $\lambda = \lambda^{\theta}$ and $\K$ is categorical in $\lambda$.
  \end{enumerate}

  \emph{Then} $\K_{\ge \mu}$ has amalgamation and arbitrarily large models. 
\end{thm}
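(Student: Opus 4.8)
The plan is to reduce to the countable-cofinality case treated by Lemma \ref{main-lem-frame}, build a good frame there, and push it upward using Fact \ref{frame-upward}. The only hypothesis of Lemma \ref{main-lem-frame} not automatically available here is that $\mu$ have countable cofinality, so the crux of the argument is to descend to a suitable $\mu' \le \mu$ of countable cofinality that still carries the strong categoricity hypothesis (2); everything afterwards is comparatively routine.

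First I would build a strong limit cardinal $\mu'$ with $\LS (\K) < \mu' \le \mu$ and $\cf{\mu'} = \aleph_0$ satisfying (2). Recursively define $\theta_n < \mu$: put $\theta_0 = \LS (\K)^+$ (which is $< \mu$ since $\mu$ is a limit cardinal above $\LS (\K)$), and given $\theta_n < \mu$ use (2) to pick a categoricity cardinal $\lambda_n < \mu$ with $\lambda_n = \lambda_n^{\theta_n}$, then set $\theta_{n + 1} = \lambda_n^+$. Let $\mu' := \sup_{n < \omega} \theta_n = \sup_{n < \omega} \lambda_n$. As $2^{\theta_n} \le \lambda_n^{\theta_n} = \lambda_n < \mu'$, the cardinal $\mu'$ is a strong limit of countable cofinality with $\LS (\K) < \mu' \le \mu$. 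It satisfies (2): given $\theta < \mu'$, choose $n$ with $\theta < \theta_n$; then $\lambda_n \le \lambda_n^\theta \le \lambda_n^{\theta_n} = \lambda_n$, so $\lambda_n^\theta = \lambda_n < \mu'$, and $\K$ is categorical in $\lambda_n$.

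Next I would locate a non-maximal $M \in \K_{\mu'}$. By (1) there is a non-maximal $N \in \K_\mu$, so in particular $\K_\mu \neq \emptyset$. If $\mu' = \mu$, take $M := N$; if $\mu' < \mu$, use the Löwenheim-Skolem-Tarski axiom to choose $M \lea N$ in $\K_{\mu'}$, so that $M \lta N$ and $M$ is non-maximal. All four hypotheses of Lemma \ref{main-lem-frame} now hold with $\mu'$ in place of $\mu$: weak amalgamation of $\K_{\mu'}$, coherent sequences of $\K_{<\mu'}$, and $\LS (\K)$-tameness of $\K_{\le \mu'}$ are inherited from the global hypotheses on $\K$, while (2) was just checked. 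Applying Lemma \ref{main-lem-frame} to this $M$ yields a type-full $\goodms{S}$ $\mu'$-frame on $\K_M$.

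Finally I would transfer the frame upward and translate back to $\K$. Since $\K_M$ is $\mu'$-tame and has weak amalgamation (both inherited from $\K$ after adjoining constants for $M$), Fact \ref{frame-upward} makes the $\goodms{S}$ $\mu'$-frame into a good $[\mu', \infty)$-frame and shows that $(\K_M)_{\ge \mu'}$ has amalgamation and arbitrarily large models. Arbitrarily large models of $\K_M$ are arbitrarily large models of $\K$, which gives the second clause (and, with the Löwenheim-Skolem-Tarski axiom, that $\K_{\ge \mu} \neq \emptyset$). For amalgamation in $\K_{\ge \mu}$, take $N_0 \lea N_1, N_2$ in $\K_{\ge \mu}$; we may assume $N_0 \lta N_1$ (else $N_2$ is already an amalgam). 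Choose $M' \lea N_0$ in $\K_{\mu'}$; then $M' \lta N_1$, so $M'$ is non-maximal and $N_0, N_1, N_2 \in (\K_{M'})_{\ge \mu'}$. Repeating the construction with $M'$ in place of $M$, amalgamation in $(\K_{M'})_{\ge \mu'}$ supplies the desired amalgam, completing the proof. The genuine difficulty is concentrated in the descent to $\mu'$: one must keep countable cofinality, preserve the full strength of (2), and ensure a non-maximal base survives; the upward transfer and the passage from $\K_M$ back to $\K$ then follow formally.
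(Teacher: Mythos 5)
Your proposal is correct and follows essentially the same route as the paper: reduce without loss of generality to a strong limit $\mu'$ of countable cofinality still satisfying the categoricity hypothesis, locate a non-maximal $M \in \K_{\mu'}$, apply Lemma \ref{main-lem-frame} to get a $\goodms{S}$ $\mu'$-frame on $\K_M$, and transfer it upward via Fact \ref{frame-upward}. The paper leaves the descent to $\mu'$, the survival of a non-maximal base, and the final reduction of amalgamation in $\K_{\ge \mu}$ to amalgamation in the classes $\K_{M'}$ as brief remarks, and you have filled in exactly those details correctly.
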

\begin{proof}
  Assume without loss of generality that $\mu$ has countable cofinality (otherwise, it is easy to find a smaller cardinal which has countable cofinality and still satisfies the hypotheses). Fix a non-maximal $M \in \K_\mu$. By Lemma \ref{main-lem-frame}, there is a $\goodms{S}$ $\mu$-frame on $\K_M$. It is easy to check that $\K_M$ has weak amalgamation and is $\mu$-tame. Therefore by Fact \ref{frame-upward}, $\K_M$ has amalgamation and arbitrarily large model. In particular, $\K$ has arbitrarily large models and $M$ is an amalgamation base in $\K$. Since any maximal model is an amalgamation base, we deduce that $\K_{\ge \mu}$ has amalgamation.
\end{proof}

We now work toward transferring categoricity assuming arbitrarily large models. We will use the following upward categoricity transfer of Grossberg and VanDieren for tame AECs with both amalgamation and arbitrarily large models.

\begin{fact}[{\cite[6.3]{tamenessthree}}]\label{gv-upward}
  Let $\K$ be an $\LS (\K)$-tame AEC with amalgamation and arbitrarily large models. If $\K$ is categorical in $\LS (\K)$ and in $\LS (\K)^+$, then $\K$ is categorical everywhere above $\LS (\K)$.
\end{fact}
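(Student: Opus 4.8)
The plan is to reduce categoricity in a given cardinal to the statement that \emph{every} model of that size is saturated, and then to transfer saturation upward. Set $\lambda := \LS (\K)$. Since $\K$ has amalgamation and arbitrarily large models, and categoricity supplies joint embedding, we may work inside a sufficiently homogeneous model and treat types semantically. First I would establish stability: categoricity in $\lambda^+$ forces stability in $\lambda$ (a model of size $\lambda^+$ realizing too many types over a size-$\lambda$ submodel contradicts categoricity by a counting argument), and $\lambda$-tameness propagates stability to every $\mu \ge \lambda$. Given stability in $\mu$, saturated models of size $\mu$ exist and are unique up to isomorphism, so for each $\mu > \lambda$ we have that $\K$ is categorical in $\mu$ if and only if every $M \in \K_\mu$ is saturated. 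The whole problem thus becomes: show that every model of size $\mu$ is saturated, for all $\mu > \lambda$.

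The second step is to extract superstable-like behavior from the two categoricity cardinals. Working with the $(<\mu)$-splitting relation, I would prove the analogues (now with full amalgamation, hence in their classical, stronger forms) of local character (as in Lemma~\ref{splitting-lc}), uniqueness over universal extensions (as in Lemma~\ref{weak-uq}), and extension (as in Lemmas~\ref{weak-ext} and \ref{ext-lem}). The point is that categoricity in the successor $\lambda^+$ makes the $\lambda^+$-sized model saturated, which supplies exactly the local-saturation hypotheses needed to run these arguments and to find a single initial segment over which a given type does not split. The outcome is a nonsplitting independence notion enjoying monotonicity, uniqueness, extension, and local character in all cardinals $\ge \lambda$ — that is, a superstability property.

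The heart of the argument is the \emph{uniqueness of limit models}: any two limit models of the same cardinality (over the same base, and regardless of the cofinalities of the chains witnessing them) are isomorphic. This is where the superstability of the previous step is indispensable, and it is the step I expect to be by far the hardest, requiring a careful inductive independence calculus to compare two differently-built limit chains. From uniqueness of limit models I would deduce the technical crux needed for the transfer: an increasing union of $\mu$-saturated models of size $\mu$ is again $\mu$-saturated. Equivalently, the saturated model of size $\mu$ is precisely the limit model of size $\mu$, and unions of limit models remain limit models.

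With the crux in hand, the categoricity transfer is a clean induction on $\mu > \lambda$, the base case being categoricity in $\lambda^+$. For the successor step, assume $\K$ is categorical in $\mu$ and let $N \in \K_{\mu^+}$; writing $N = \bigcup_{i < \mu^+} N_i$ as an increasing continuous union with each $N_i \in \K_\mu$, categoricity in $\mu$ makes every $N_i$ saturated, so by the crux $N$ is saturated, whence every model of size $\mu^+$ is saturated and $\K$ is categorical in $\mu^+$. At a limit cardinal $\mu$, a parallel resolution argument (choosing the cofinality of the filtration appropriately and invoking the induction hypothesis that all smaller models are saturated, with separate care for the singular case) shows every $N \in \K_\mu$ is saturated. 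Thus every model above $\lambda$ is saturated, and categoricity holds everywhere above $\lambda$. The main obstacle, to reiterate, is the uniqueness of limit models underlying the crux; everything else is bookkeeping around the splitting calculus once that superstability input is available.
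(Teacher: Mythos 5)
The paper does not actually prove Fact \ref{gv-upward}; it quotes it as a black box from Grossberg--VanDieren, so I am comparing your sketch against their argument. Your overall frame --- reduce categoricity in $\mu$ to ``every model of cardinality $\mu$ is saturated'', extract stability and a splitting calculus from the two categoricity hypotheses plus tameness, then induct on $\mu$ --- is the right one and matches theirs. But there is a genuine gap at the successor step, and it sits exactly where you declare the remaining work to be ``bookkeeping''. Your crux, ``an increasing union of $\mu$-saturated models of size $\mu$ is again $\mu$-saturated'', applied to a resolution $N=\bigcup_{i<\mu^+}N_i$ with each $N_i\in\K_\mu$ saturated, yields only that $N$ is $\mu$-saturated. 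For $N\in\K_{\mu^+}$ to be \emph{saturated} it must realize types over submodels $M\lea N$ of cardinality exactly $\mu$, and neither the crux nor any single $N_i$ addresses those: saturation of $N_{i+1}$ in cardinality $\mu$ says nothing about types over all of $N_i$, and in particular does not make $N_{i+1}$ universal over $N_i$, so you cannot even read $N$ as a $(\mu,\mu^+)$-limit. The missing step --- which is the actual heart of Grossberg--VanDieren's proof --- is the tameness-powered realization argument: given $p\in\gS(M)$ with $M\lea N$ saturated of cardinality $\mu$, find $M_0\lea M$ of cardinality $\LS(\K)$ over which $p$ does not split (local character over saturated models, as in Lemma \ref{splitting-lc}), find $M_0'\lea M$ universal over $M_0$, realize $p\rest M_0'$ by some $a\in N$ (only $\LS(\K)^+$-saturation of $N$ is needed here), and then argue via uniqueness of nonsplitting extensions together with tameness (as in Lemma \ref{weak-uq}) that $\gtp(a/M;N)=p$. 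That argument is the theorem, not bookkeeping.

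A secondary point: routing everything through uniqueness of limit models (over all cofinalities of the witnessing chains) is a far heavier hammer than the cited proof uses; Grossberg--VanDieren's Theorem 6.3 does not rely on it, and a complete proof of that uniqueness in the required generality was not available at the time. Meanwhile the limit stage of your induction, which you flag for ``separate care'', is in fact the easy part: extend $M$ inside $N$ to a model of some cardinality in $(\|M\|,\mu)$, which is saturated by the induction hypothesis and hence already realizes $p$. So the proposal over-invests in an unnecessary deep input while leaving unproved the step that actually carries the transfer.
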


Toward deriving global amalgamation, we show how to build a good frame assuming arbitrarily large models and enough categoricity, tameness, and amalgamation in low cardinals. We will use recent results from \cite{shvi-notes-apal} and \cite{categ-saturated-afml}, but the reader can regard them as black boxes. 

\begin{lem}\label{good-frame-categ}
  Let $\K$ be an AEC. \emph{If}:

  \begin{enumerate}
  \item $\K$ has arbitrarily large models.
  \item $\K$ is categorical in $\LS (\K)$.
  \item $\K$ is categorical in $\LS (\K)^+$.
  \item $\K_{\LS (\K)}$ has amalgamation.
  \item $\K_{\LS (\K)^+}$ has weak amalgamation.
  \item $\K_{\le \LS (\K)^+}$ is $\LS (\K)$-tame.
  \end{enumerate}

  Then $\K$ has a good $\LS (\K)^+$-frame.
\end{lem}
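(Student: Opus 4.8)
Set $\lambda := \LS (\K)$. The plan is to build the frame directly on $\K_{\lambda^+}$ using the template of the main lemma (Lemma \ref{main-lem-frame}): for $N_1 \lea N_2$ both in $\K_{\lambda^+}$ and $p \in \gS (N_2)$, declare that $p$ does \emph{not fork} over $N_1$ exactly when $p$ does not $\lambda$-split over some $N_1^0 \lea N_1$ of size $\lambda$. The difficulty compared with the previous section is that $\lambda^+$ is a successor rather than a strong limit of countable cofinality, so the cheap local-character argument that powered Lemma \ref{nice-resolv-lem} (short filtrations, Lemma \ref{splitting-lc} with $\delta = \omega$) is unavailable: a filtration of a model of size $\lambda^+$ by models of size $\lambda$ has length $\lambda^+$, and Lemma \ref{splitting-lc} cannot be applied with $\delta = \lambda^+$. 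This is precisely the point where the superstability input furnished by the black boxes \cite{shvi-notes-apal} and \cite{categ-saturated-afml} is needed.

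First I would record the standard consequences of the low-cardinal hypotheses. Amalgamation and categoricity in $\lambda$ give joint embedding in $\lambda$, and categoricity in $\lambda^+$ then forces $\K$ to be stable in $\lambda$ (instability in $\lambda$ would, via amalgamation in $\lambda$, yield too many pairwise non-isomorphic models of size $\lambda^+$). Since $\K_{\lambda^+} \neq \emptyset$, there are no maximal models in $\lambda$, so stability and amalgamation in $\lambda$ let one build a saturated model of size $\lambda^+$; categoricity in $\lambda^+$ then shows the unique model of that size is saturated, hence \emph{every} member of $\K_{\lambda^+}$ is saturated. The crucial further step is local character of $\lambda$-splitting (superstability in $\lambda$): that every type over a member of $\K_{\lambda^+}$ fails to $\lambda$-split over some submodel of size $\lambda$. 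This does not follow from the countable-cofinality trick used earlier and is exactly what I would extract from \cite{shvi-notes-apal} and \cite{categ-saturated-afml}, whose hypotheses (tameness, amalgamation in $\lambda$, categoricity) we have just arranged to meet.

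With superstability in hand, I would verify the good $\lambda^+$-frame axioms essentially as in the proof of Lemma \ref{main-lem-frame}. Invariance and monotonicity are formal; uniqueness comes from $\LS(\K)$-tameness together with weak uniqueness (Lemma \ref{weak-uq}); local character along chains of length $< \lambda^{++}$ follows from the superstability input as in the corresponding bullet of Lemma \ref{main-lem-frame}. Nice filtrability of a member of $\K_{\lambda^+}$ over a submodel of size $\lambda$ now holds: consecutive pieces can be taken locally universal by categoricity and stability in $\lambda$, the filtrations are coherent because amalgamation in $\lambda$ already yields coherent sequences of types over $\lambda$-sized models (Lemma \ref{coherent-lem}(1), so no separate coherence hypothesis is needed), and condition (5) of Definition \ref{nice-def} is exactly the superstability statement. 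The extension axiom then follows from Lemma \ref{ext-lem} (building the nonforking extension as a coherent limit of the small nonsplitting extensions given by Lemma \ref{weak-ext}), and \emph{crucially} this extension property, combined with weak amalgamation in $\lambda^+$, yields amalgamation in $\lambda^+$ via Fact \ref{type-ext-weak-ap}, just as at the end of Lemma \ref{main-lem-frame}. Stability in $\lambda^+$ (so $|\gS(N)| \le \lambda^+$), no maximal models in $\lambda^+$ (extensions of nonalgebraic types stay nonalgebraic, against a background of arbitrarily large models), density of basic types, and the formal axioms (transitivity, continuity, symmetry) then fall out as in \cite[II.2.17, II.2.18]{shelahaecbook} and \cite{ss-tame-jsl}.

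The main obstacle is twofold. The conceptual one is obtaining \emph{superstability in $\lambda$ at the successor $\lambda^+$}: categoricity in a successor cardinal does not obviously give local character of nonsplitting, and the strong-limit/countable-cofinality device that made the earlier sections elementary simply does not apply here, which is why the two black boxes are invoked rather than re-proved. The technical one is the apparent circularity between extension and amalgamation in $\lambda^+$: building nonforking extensions looks like it needs amalgamation to realize and compare types, yet amalgamation in $\lambda^+$ is exactly what we are trying to produce. The resolution, as throughout the paper, is to build extensions only over nicely filtrable (saturated) models, where tameness plus coherent limits of \emph{small} nonsplitting extensions suffice, and then to bootstrap amalgamation out of the extension property through Fact \ref{type-ext-weak-ap} instead of assuming it.
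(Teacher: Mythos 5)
Your proposal follows essentially the same route as the paper's proof: extract $\lambda$-superstability from \cite{shvi-notes-apal} (and $\lambda$-symmetry from \cite{categ-saturated-afml}), run the frame construction of \cite{ss-tame-jsl} while proving the extension property \emph{without} using amalgamation in $\lambda^+$, and then bootstrap amalgamation in $\lambda^+$ from extension plus weak amalgamation via Fact \ref{type-ext-weak-ap}. The only slight inaccuracy is in attribution: the paper invokes \cite[5.7(1)]{categ-saturated-afml} specifically for $\lambda$-symmetry (with symmetry of the resulting frame then obtained as in \cite[6.8]{tame-frames-revisited-jsl}), not as a second source of superstability, and frame symmetry does not simply ``fall out'' formally as your last sentence suggests --- but this does not affect the substance of the argument.
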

\begin{proof}
  Let $\lambda := \LS (\K)$. By \cite{shvi-notes-apal}, $\K$ is $\lambda$-superstable (essentially, this means that splitting has what is called the $\aleph_0$-local character for universal chains in \cite{ss-tame-jsl}) and by \cite[5.7(1)]{categ-saturated-afml}, $\K$ has $\lambda$-symmetry. We now attempt to build a $\goodms{S}$-frame as in \cite{ss-tame-jsl}. We can prove extension without using amalgamation in $\lambda^+$, hence by Fact \ref{type-ext-weak-ap} we obtain amalgamation in $\lambda^+$, and hence obtain a $\goodms{S}$ $\lambda^+$-frame. Symmetry is then proven as in \cite[6.8]{tame-frames-revisited-jsl} (and is not needed for the present paper).
\end{proof}

We can now prove a generalization of Fact \ref{gv-upward} for tame AECs with only weak amalgamation (but still with arbitrarily large models).

\begin{thm}\label{gv-weak-ap}
  Let $\K$ be an $\LS (\K)$-tame AEC with weak amalgamation and arbitrarily large models. If $\K_{\LS (\K)}$ has amalgamation, and $\K$ is categorical in both $\LS (\K)$ and $\LS (\K)^+$, then $\K_{\ge \LS (\K)}$ has amalgamation and $\K$ is categorical everywhere above $\LS (\K)$.
\end{thm}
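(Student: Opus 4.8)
Let $\K$ be an $\LS (\K)$-tame AEC with weak amalgamation and arbitrarily large models. If $\K_{\LS (\K)}$ has amalgamation, and $\K$ is categorical in both $\LS (\K)$ and $\LS (\K)^+$, then $\K_{\ge \LS (\K)}$ has amalgamation and $\K$ is categorical everywhere above $\LS (\K)$.

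The plan is to manufacture a good frame at the bottom, transfer it upward using tameness and weak amalgamation, patch together the resulting amalgamation with the amalgamation we are handed at $\LS (\K)$, and finally invoke the Grossberg--VanDieren transfer. Set $\lambda := \LS (\K)$. First I would verify the hypotheses of Lemma \ref{good-frame-categ}: arbitrarily large models and categoricity in $\lambda$ and $\lambda^+$ are assumed outright, $\K_\lambda$ has amalgamation by hypothesis, while $\K_{\lambda^+}$ has weak amalgamation and $\K_{\le \lambda^+}$ is $\lambda$-tame since these are inherited from the global weak amalgamation and $\lambda$-tameness of $\K$. Thus $\K$ has a good $\lambda^+$-frame $\s$. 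A good frame is in particular a $\goodms{S}$ $\lambda^+$-frame, and $\lambda$-tameness (i.e.\ $(<\lambda^+)$-tameness) trivially implies $\lambda^+$-tameness; so, together with weak amalgamation, Fact \ref{frame-upward} applied with $\lambda^+$ in place of $\lambda$ upgrades $\s$ to a good $[\lambda^+, \infty)$-frame and yields that $\K_{\ge \lambda^+}$ has amalgamation (arbitrarily large models we already have).

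The main obstacle is the next step: gluing the two ``layers'' of amalgamation together to obtain amalgamation in all of $\K_{\ge \lambda}$. For a base $M_0$ of size $\ge \lambda^+$ this is immediate from amalgamation in $\K_{\ge \lambda^+}$, so the difficulty is entirely with bases $M_0 \in \K_\lambda$ whose extensions may be much larger. I would first treat the case where the legs $M_1, M_2$ have size $\le \lambda^+$: here one may resolve each leg as an increasing continuous $\lea$-chain of $\lambda$-sized models over $M_0$ (this is legitimate since every proper initial segment of a $\lambda^+$-indexed chain has cardinality $\le \lambda$, a degenerate single-term chain handling legs of size exactly $\lambda$), and then run a back-and-forth amalgamation along a single $\lambda^+$-indexed chain, amalgamating one $\lambda$-sized piece at a time using only amalgamation in $\K_\lambda$ and taking unions at limits via the Tarski--Vaught axioms. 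The colimit amalgamates $M_1$ and $M_2$ over $M_0$ inside a model of size $\le \lambda^+$.

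For legs of size $> \lambda^+$ over an $M_0 \in \K_\lambda$, I would reduce to the previous case: choose $M_0 \lea M_\ell^\ast \lea M_\ell$ with $M_\ell^\ast \in \K_{\lambda^+}$ for $\ell = 1,2$ (possible by Löwenheim--Skolem), amalgamate $M_1^\ast$ and $M_2^\ast$ over $M_0$ by the paragraph above to obtain $P \in \K_{\lambda^+}$, and then ``fold in'' the full legs: amalgamate $M_1$ and $P$ over $M_1^\ast$, and then $M_2$ with the result over $M_2^\ast$, each time over a base of size $\lambda^+$ so that amalgamation in $\K_{\ge \lambda^+}$ applies. A routine diagram chase confirms that the two resulting embeddings of $M_1$ and $M_2$ agree on $M_0$, because $M_1^\ast$ and $M_2^\ast$ were matched over $M_0$ inside $P$. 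This establishes that every $M_0 \in \K_\lambda$ is an amalgamation base in $\K$, and hence that $\K_{\ge \lambda}$ has amalgamation.

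Finally, $\K$ is $\lambda$-tame, has arbitrarily large models, is categorical in $\lambda$ and $\lambda^+$, and now has amalgamation in $\K_{\ge \lambda}$; Fact \ref{gv-upward} then gives that $\K$ is categorical everywhere above $\lambda$, completing the proof. I expect steps 1, 2, and 4 to be essentially bookkeeping against the cited results, so the only genuinely technical point is the amalgamation gluing of the third paragraph, and even there the work is of a standard chain-construction flavor rather than conceptually new.
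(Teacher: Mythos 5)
Your proof is correct and follows the same route as the paper's: Lemma \ref{good-frame-categ} to obtain the good $\LS(\K)^+$-frame, Fact \ref{frame-upward} to transfer it upward and get amalgamation in $\K_{\ge \LS(\K)^+}$, and Fact \ref{gv-upward} to conclude categoricity. The only difference is that you spell out the chain/gluing argument showing that amalgamation in $\K_{\LS(\K)}$ together with amalgamation in $\K_{\ge \LS(\K)^+}$ yields amalgamation in all of $\K_{\ge \LS(\K)}$ --- a step the paper dispatches with an ``and hence'' --- and your argument there is the standard one and is correct.
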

\begin{proof}
  By Lemma \ref{good-frame-categ}, $\K$ has a good $\LS (\K)^+$-frame. By Fact \ref{frame-upward}, it follows that $\K_{\ge \LS (\K)^+}$ (and hence $\K_{\ge \LS (\K)}$) has amalgamation. Now apply Fact \ref{gv-upward}.
\end{proof}

Lemma \ref{good-frame-categ} still asked for full amalgamation in one cardinal. However we can use the weak diamond to derive it from successive categoricity:

\begin{cor}\label{gv-weak-ap-2}
  Let $\K$ be an $\LS (\K)$-tame AEC with weak amalgamation and arbitrarily large models. Let $\theta > \LS (\K)$ be least such that $2^{\LS (\K)} < 2^\theta$. If $\K$ is categorical in every cardinal in $[\LS (\K), \theta]$, then $\K$ is categorical everywhere above $\LS (\K)$. Moreover, there exists $\lambda \in [\LS (\K), \theta)$ such that $\K_{\ge \lambda}$ has amalgamation
\end{cor}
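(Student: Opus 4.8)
The plan is to reduce everything to Theorem~\ref{gv-weak-ap} by first securing \emph{full} amalgamation in a single cardinal using the weak diamond. Write $\lambda_0 := \LS (\K)$. The crux is a purely cardinal-arithmetic observation: $\theta$ must be a successor cardinal $\theta = \chi^+$ with $2^\chi < 2^{\chi^+}$. To see this, note first that by minimality of $\theta$, for every $\mu$ with $\lambda_0 \le \mu < \theta$ we have $2^\mu = 2^{\lambda_0}$, so that $2^{<\theta} = 2^{\lambda_0}$. If $\theta$ were a limit cardinal, then picking an increasing cofinal sequence $\seq{\theta_i : i < \cf{\theta}}$ in $\theta$ with each $\theta_i \ge \lambda_0$ gives $2^\theta = \prod_{i < \cf{\theta}} 2^{\theta_i} = (2^{\lambda_0})^{\cf{\theta}} = 2^{\lambda_0 \cdot \cf{\theta}}$. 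Now either $\cf{\theta} \le \lambda_0$, in which case $\lambda_0 \cdot \cf{\theta} = \lambda_0$; or $\lambda_0 < \cf{\theta} < \theta$, in which case $2^{\cf{\theta}} = 2^{\lambda_0}$ and $\lambda_0 \cdot \cf{\theta} = \cf{\theta}$. In both cases $2^\theta = 2^{\lambda_0}$, contradicting the defining property of $\theta$. Hence $\theta = \chi^+$ for some $\chi \ge \lambda_0$, and since $\chi \in [\lambda_0, \theta)$ we conclude $2^\chi = 2^{\lambda_0} < 2^\theta = 2^{\chi^+}$.

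Next I would use this gap to obtain amalgamation. Both $\chi$ and $\chi^+ = \theta$ lie in $[\LS (\K), \theta]$, so $\K$ is categorical in each. Applying the result of Shelah recalled in the introduction (\cite[I.3.8]{shelahaecbook}) to the AEC $\K_{\ge \chi}$, which has Löwenheim-Skolem-Tarski number $\chi$, the categoricity in $\chi$ and $\chi^+$ together with $2^\chi < 2^{\chi^+}$ yields amalgamation in $\chi$; that is, $\K_\chi$ has amalgamation.

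Finally I would set $\K' := \K_{\ge \chi}$, an AEC with $\LS (\K') = \chi$. It inherits from $\K$ the properties of being $\chi$-tame (tameness transfers upward since $\chi \ge \LS (\K)$), having weak amalgamation, and having arbitrarily large models; moreover $\K'_\chi = \K_\chi$ has amalgamation and $\K'$ is categorical in $\chi = \LS (\K')$ and in $\chi^+ = \LS (\K')^+$. Theorem~\ref{gv-weak-ap} applied to $\K'$ then gives that $\K'_{\ge \chi} = \K_{\ge \chi}$ has amalgamation and that $\K'$ is categorical everywhere above $\chi$. Combining this with the categoricity in $[\LS (\K), \chi]$ supplied by the hypothesis proves categoricity everywhere above $\LS (\K)$, and the amalgamation of $\K_{\ge \chi}$ together with $\chi \in [\LS (\K), \theta)$ witnesses the ``moreover'' clause with $\lambda = \chi$.

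The only genuinely non-formal point is the cardinal-arithmetic lemma that $\theta$ is a successor with a weak-diamond gap at its predecessor $\chi$; this is where I expect the main (though standard) obstacle to lie. Once it is established, the remainder is bookkeeping: checking that $\K_{\ge \chi}$ satisfies the hypotheses of Theorem~\ref{gv-weak-ap} and invoking the cited amalgamation transfer.
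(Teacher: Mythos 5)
Your overall strategy --- secure full amalgamation in a single cardinal of $[\LS (\K), \theta)$ via a weak-diamond argument and then hand everything to Theorem~\ref{gv-weak-ap} applied to $\K_{\ge \lambda}$ --- is exactly the paper's. However, the cardinal-arithmetic lemma on which your reduction rests is not a theorem of ZFC, and this is a genuine gap. Your case analysis for limit $\theta$ distinguishes only $\cf{\theta} \le \lambda_0$ and $\lambda_0 < \cf{\theta} < \theta$; it omits the case $\cf{\theta} = \theta$, i.e.\ $\theta$ a regular limit (weakly inaccessible) cardinal, where the computation $2^\theta = (2^{\lambda_0})^{\cf{\theta}} = 2^{\lambda_0 \cdot \theta} = 2^\theta$ yields no contradiction. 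This case can really occur: starting from a model of GCH with an inaccessible $\theta$, Easton forcing making $2^\mu = \theta$ for every infinite $\mu < \theta$ produces a model in which $\theta$ is still weakly inaccessible, $2^\mu = 2^{\aleph_0} = \theta$ for all infinite $\mu < \theta$, and $2^\theta = \theta^+ > 2^{\aleph_0}$. There the least $\theta$ with $2^{\LS (\K)} < 2^\theta$ is not a successor, so your claim that $\theta = \chi^+$ fails and \cite[I.3.8]{shelahaecbook} in its successor form cannot be invoked.

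The paper sidesteps this entirely: instead of reducing to the classical Devlin--Shelah weak diamond at a successor, it invokes a generalization of \cite[I.3.8]{shelahaecbook} based on the generalized weak diamond of \cite[1.2.4]{shvi635}, which applies to any pair $\LS (\K) < \theta$ with $2^{\LS (\K)} = 2^{<\theta} < 2^{\theta}$ and directly yields some $\lambda \in [\LS (\K), \theta)$ in which $\K_\lambda$ has amalgamation, with no hypothesis on the cofinality of $\theta$. Everything after your arithmetic lemma (passing to $\K_{\ge \chi}$, checking the hypotheses of Theorem~\ref{gv-weak-ap}, and assembling the two categoricity intervals) is correct and matches the paper, so the fix is to replace the successor reduction by the cited generalized weak diamond.
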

\begin{proof}
  Generalizing \cite[I.3.8]{shelahaecbook} (using the corresponding generalization of the Devlin-Shelah weak diamond \cite{dvsh65} as in \cite[1.2.4]{shvi635}), we can find $\lambda \in [\LS (\K), \theta)$ such that $\K_\lambda$ has amalgamation. Now apply Theorem \ref{gv-weak-ap} to $\K_{\ge \lambda}$.
\end{proof}

Putting together all the results proven so far, we obtain the main result of the paper:

\begin{cor}\label{main-cor}
  Let $\K$ be an $\LS (\K)$-tame AEC with weak amalgamation and coherent sequences. If $\K$ is categorical in every cardinal in $[\LS (\K), \beth_\omega (\LS (\K)))$, then $\K$ is categorical everywhere above $\LS (\K)$.
\end{cor}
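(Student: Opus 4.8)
The plan is to set $\lambda := \LS(\K)$ and $\mu := \beth_\omega(\lambda)$, then assemble the machinery built earlier in the paper to produce arbitrarily large models and amalgamation, at which point the Grossberg--VanDieren-style transfer finishes the job. First I would verify that $\mu = \beth_\omega(\lambda)$ is a strong limit cardinal of countable cofinality strictly above $\lambda$, and that it satisfies the categoricity hypothesis of Theorem \ref{arb-large-thm}: for every $\theta < \mu$ we have $\theta < \beth_n(\lambda)$ for some $n$, and then $\lambda' := \beth_{n+1}(\lambda)$ is below $\mu$, satisfies $\lambda' = (\lambda')^\theta$ (since $\lambda'$ is a strong limit above $\theta$ of the right form, indeed $(\lambda')^\theta = 2^\theta \cdot \lambda' = \lambda'$ because $2^\theta \le \lambda'$), and $\K$ is categorical in $\lambda'$ by hypothesis (as $\lambda' \in [\lambda, \mu)$). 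I would also check condition (1) of Theorem \ref{arb-large-thm}, that not every element of $\K_\mu$ is maximal: this follows from Theorem \ref{basic-existence-thm} applied at the limit cardinal $\mu$, using categoricity on all of $[\lambda, \mu)$.

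Having checked the hypotheses, I would invoke Theorem \ref{arb-large-thm}: since $\K$ is $\lambda$-tame with weak amalgamation and coherent sequences, and $\mu$ is a strong limit with the two displayed conditions, we conclude that $\K_{\ge \mu}$ has amalgamation and arbitrarily large models. In particular $\K$ as a whole now has arbitrarily large models, which is the crucial global hypothesis missing from the raw AEC.

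Next I would descend to low cardinals to apply the categoricity transfer. The natural move is to let $\theta > \lambda$ be least with $2^\lambda < 2^\theta$ and apply Corollary \ref{gv-weak-ap-2}, but one must confirm that $\theta \le \mu$ (so that categoricity in $[\lambda,\theta]$ is available from the hypothesis, which only guarantees categoricity below $\mu = \beth_\omega(\lambda)$). Since $2^\lambda < 2^{\lambda^+}$ can fail, $\theta$ could a priori be large; however, by monotonicity of the continuum function one always has $2^\lambda < 2^{\beth_\omega(\lambda)}$ when $\beth_\omega(\lambda) > \lambda$, and more carefully $\theta \le \beth_1(\lambda) = 2^\lambda < \mu$, so in fact $\theta$ is comfortably below $\mu$; thus $[\lambda,\theta] \subseteq [\lambda, \mu)$ and $\K$ is categorical throughout $[\lambda,\theta]$. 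Now $\K$ is an $\lambda$-tame AEC with weak amalgamation and arbitrarily large models, so Corollary \ref{gv-weak-ap-2} applies directly and yields that $\K$ is categorical everywhere above $\lambda = \LS(\K)$, completing the proof.

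The main obstacle I anticipate is bookkeeping with the cardinal arithmetic at the two interfaces: first confirming the ``$\lambda = \lambda^\theta$'' clause of Theorem \ref{arb-large-thm} is met by sufficiently many categoricity cardinals cofinal in $\mu$ (here the strong-limit structure of $\beth_\omega(\lambda)$ is exactly what makes $\beth_{n+1}(\lambda)^\theta = \beth_{n+1}(\lambda)$ go through), and second ensuring $\theta \le \mu$ so that Corollary \ref{gv-weak-ap-2}'s categoricity input is genuinely supplied by the hypothesis. Once those arithmetic checks are in place, the proof is a short three-line chaining of Theorem \ref{basic-existence-thm}, Theorem \ref{arb-large-thm}, and Corollary \ref{gv-weak-ap-2}; no new model-theoretic content is needed at this final stage since all the hard work (the local character lemma, nice filtrations, the frame construction, and the upward frame transfer) has already been carried out.
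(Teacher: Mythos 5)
Your proposal is correct and follows exactly the paper's route: Theorem \ref{basic-existence-thm} to get a non-maximal model in $\K_{\beth_\omega(\LS(\K))}$, then Theorem \ref{arb-large-thm} for arbitrarily large models, then Corollary \ref{gv-weak-ap-2}; your extra checks (that $\beth_{n+1}(\lambda)^\theta = \beth_{n+1}(\lambda)$ for $\theta \le \beth_n(\lambda)$, and that the least $\theta$ with $2^\lambda < 2^\theta$ satisfies $\theta \le 2^\lambda < \beth_\omega(\lambda)$) are exactly the arithmetic the paper leaves implicit. The only quibble is the throwaway identity ``$(\lambda')^\theta = 2^\theta \cdot \lambda'$'' and the claim that $\beth_{n+1}(\lambda)$ is a strong limit, neither of which is right in general, but the correct computation $(2^{\beth_n(\lambda)})^\theta = 2^{\beth_n(\lambda)}$ is what you actually use.
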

\begin{proof}
  Let $\mu := \beth_\omega (\LS (\K))$. By Theorem \ref{basic-existence-thm}, not every element of $\K_\mu$ is maximal. By Theorem \ref{arb-large-thm}, $\K$ has arbitrarily large models. Now apply Corollary \ref{gv-weak-ap-2}.
\end{proof}

Specializing to universal classes, we get:

\begin{cor}\label{main-cor-2}
  If a universal $\Ll_{\omega_1, \omega}$ sentence is categorical in an end segment of cardinals strictly below $\beth_\omega$, then it is also categorical everywhere above $\beth_\omega$.
\end{cor}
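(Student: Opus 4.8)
The plan is to deduce this from Corollary \ref{main-cor}, applied not to the class $\K$ defined by the sentence itself but to a suitable tail $\K_{\ge \beth_n}$. First I would record that the models of a universal $\Ll_{\omega_1, \omega}$ sentence form an AEC $\K$ (ordered by substructure) with $\LS (\K) = \aleph_0$ that is closed under intersections. By Remark \ref{tp-intersec} this yields weak amalgamation, and by Lemma \ref{coherent-lem}(2) it yields coherent sequences. For tameness I would argue that, since the vocabulary is finitary and $\K$ has intersections, the description of types in Remark \ref{tp-intersec} shows that $\gtp (a / M; N)$ is determined by the quantifier-free type of $a$ over $M$ (the closure $\cl^N (aM)$ being generated by terms in $a$ and parameters from $M$); as quantifier-free types are determined by their restrictions to finite parameter sets, $\K$ is $(<\aleph_0)$-tame and \emph{a fortiori} $\aleph_0$-tame, i.e.\ $\LS (\K)$-tame.

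The key move is to convert the hypothesis, which only gives categoricity on an end segment below $\beth_\omega$, into the form required by Corollary \ref{main-cor}, namely categoricity throughout $[\LS, \beth_\omega (\LS))$. Fix $\kappa_0 < \beth_\omega$ with $\K$ categorical in every $\mu \in [\kappa_0, \beth_\omega)$, and choose $n < \omega$ so that $\beth_n \ge \kappa_0$. I would then set $\K' := \K_{\ge \beth_n}$, an AEC with $\LS (\K') = \beth_n$. The arithmetic fact that makes this work is $\beth_\omega (\beth_n) = \sup_{m < \omega} \beth_{n + m} = \beth_\omega$, so that $[\LS (\K'), \beth_\omega (\LS (\K'))) = [\beth_n, \beth_\omega) \subseteq [\kappa_0, \beth_\omega)$; hence $\K'$ is categorical in every cardinal of $[\LS (\K'), \beth_\omega (\LS (\K')))$.

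Next I would check that $\K'$ still satisfies the hypotheses of Corollary \ref{main-cor}. Tameness passes upward (from $(<\aleph_1)$-tame to $\beth_n$-tame) and restricts to the subclass without trouble. Weak amalgamation and coherent sequences are inherited from $\K$ because every witness required by their definitions automatically has size $\ge \beth_n$: in Definition \ref{weak-ap-def} the intermediate model $N_1^0$ contains $M$ and the model $N_2'$ contains $N_2$, while in Definition \ref{local-def} each coherence witness $N_i$ contains $M_i$; moreover Galois types are absolute between $\K$ and $\K'$, since any amalgam realizing atomic equivalence of triples from $\K'$ is itself of size $\ge \beth_n$. Granting these verifications, Corollary \ref{main-cor} applied to $\K'$ gives that $\K'$ is categorical everywhere above $\beth_n$. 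As $\K'_\mu = \K_\mu$ for all $\mu \ge \beth_n$ and $\beth_n < \beth_\omega$, this shows $\K$ is categorical in every cardinal above $\beth_n$, and in particular everywhere above $\beth_\omega$, as desired.

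I expect the main conceptual point to be simply the realization that the ``end segment below $\beth_\omega$'' hypothesis must be repackaged by moving to the tail $\K_{\ge \beth_n}$ and invoking $\beth_\omega (\beth_n) = \beth_\omega$; everything else is the routine (but necessary) bookkeeping that tameness, weak amalgamation, coherent sequences, and the type relation all restrict correctly to that tail. The only genuinely external input is the tameness of universal classes, which I have sketched above directly from the intersection property rather than relied on as a black box.
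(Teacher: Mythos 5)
Your proof is correct and follows essentially the same route as the paper's: verify that the universal class has intersections (hence weak amalgamation and coherent sequences by Remark \ref{tp-intersec} and Lemma \ref{coherent-lem}), note tameness, and apply Corollary \ref{main-cor} to the tail $\K_{\ge \chi}$ above the start of the categoricity segment, using $\beth_\omega(\chi) = \beth_\omega$. The only difference is that you sketch the $(<\aleph_0)$-tameness of universal classes directly from the term-closure description of types, where the paper simply cites \cite[3.7]{ap-universal-apal}, and you spell out the (routine) bookkeeping of restricting the hypotheses to the tail, which the paper leaves implicit.
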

\begin{proof}
  Let $\phi$ be a universal $\Ll_{\omega_1, \omega}$ sentence and let $\K$ be its the class of models (ordered with substructure). Note that $\K$ has intersections (the closure operator computed inside $N$ is the closure under the functions of $N$). By Lemma \ref{coherent-lem}, $\K$ has coherent sequences. By Remark \ref{tp-intersec}, $\K$ has weak amalgamation. By \cite[3.7]{ap-universal-apal}, $\K$ is $\LS (\K)$-tame. Let $\chi \in [\aleph_0, \beth_\omega)$ be such that $\K$ is categorical in every cardinal in $[\chi, \beth_\omega)$. Now apply Corollary \ref{main-cor} to $\K_{\ge \chi}$.
\end{proof}

For completeness, we also point out that the last two results can be drastically improved assuming the weak GCH (see also \cite{mv-universal-jsl} for how to improve even more when the Löwenheim-Skolem-Tarski number is $\aleph_0$). On $\mu_{\text{unif}}$, see \cite[VII.0.4]{shelahaecbook2} for a definition and \cite[VII.9.4]{shelahaecbook2} for what is known. It seems that for all practical purposes the reader can take $\mu_{\text{unif}} (\lambda^{++}, 2^{\lambda^+})$ to mean $2^{\lambda^{++}}$.

\begin{thm}
  Let $\K$ be an AEC with Löwenheim-Skolem-Tarski number $\lambda$. Assume that $\K$ is $\lambda^+$-tame and has weak amalgamation. Assume further that $2^{\lambda} < 2^{\lambda^+} < 2^{\lambda^{++}}$. If $\K$ is categorical in $\lambda$, $\K$ is categorical in $\lambda^+$, and $1 \le \Ii (\K, \lambda^{++}) < \mu_{\text{unif}} (\lambda^{++}, 2^{\lambda^+})$, then $\K$ is categorical everywhere above $\lambda$.
\end{thm}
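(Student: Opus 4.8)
The plan is to follow the same high-level strategy as Corollary \ref{main-cor}, but to replace the ZFC construction of a good frame (the core of Section 4) by Shelah's set-theoretic construction, which becomes available precisely because of the weak-GCH hypotheses $2^\lambda < 2^{\lambda^+} < 2^{\lambda^{++}}$ together with the ``few models'' bound $1 \le \Ii(\K, \lambda^{++}) < \mu_{\text{unif}}(\lambda^{++}, 2^{\lambda^+})$. Throughout, $\lambda = \LS(\K)$. First I would obtain amalgamation in $\lambda$: since $2^\lambda < 2^{\lambda^+}$ and $\K$ is categorical in $\lambda$ and $\lambda^+$, the generalization of Shelah \cite[I.3.8]{shelahaecbook} already invoked in the proof of Corollary \ref{gv-weak-ap-2} (via the weak diamond, \cite{dvsh65, shvi635}) gives that $\K_\lambda$ has amalgamation.

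Next---and this is where the remaining set-theoretic hypotheses are used---I would invoke Shelah's existence theorem for good frames from his book: from categoricity in $\lambda$ and $\lambda^+$, the inequality $2^{\lambda^+} < 2^{\lambda^{++}}$, and $\Ii(\K, \lambda^{++}) < \mu_{\text{unif}}(\lambda^{++}, 2^{\lambda^+})$, one obtains a good $\lambda$-frame on $\K$ (categoricity in $\lambda$ makes this frame type-full and defined over the unique model of size $\lambda$). This is exactly the step that the ZFC portion of the present paper works hard to avoid; here the few-models bound supplies, through the weak-diamond-driven structure/non-structure dichotomy of \cite{shelahaecbook}, the superstability and symmetry needed for a good frame \emph{without} assuming categoricity all the way up to $\beth_\omega$.

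Once the good $\lambda$-frame is in hand, the rest is immediate from the machinery already developed. Since $\K$ is $\lambda^+$-tame, hence $\lambda$-tame, and has weak amalgamation, Fact \ref{frame-upward} upgrades the good $\lambda$-frame to a good $[\lambda, \infty)$-frame; in particular $\K_{\ge \lambda}$ has amalgamation and arbitrarily large models. Then $\K$ is an $\LS(\K)$-tame AEC with amalgamation and arbitrarily large models that is categorical in $\lambda$ and $\lambda^+$, so Fact \ref{gv-upward} (Grossberg--VanDieren) yields categoricity everywhere above $\lambda$, as desired.

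I expect the main obstacle to be the second step: verifying that the cited results of \cite{shelahaecbook} genuinely produce a good $\lambda$-frame on $\K$ under exactly these hypotheses. The delicate points are (i) that ``few models in $\lambda^{++}$'' rather than ``categorical in $\lambda^{++}$'' still suffices---this is the heart of Shelah's weak-diamond non-structure dichotomy---and (ii) that the frame can be taken to live at level $\lambda$, so that only $\lambda$-tameness is needed for the transfer. If instead the natural output of Shelah's analysis is a good $\lambda^+$-frame on the $\lambda^+$-saturated models (which, by categoricity in $\lambda^+$, is all of $\K_{\lambda^+}$), then one transfers it upward using the full strength of $\lambda^+$-tameness via Fact \ref{frame-upward} to get amalgamation and arbitrarily large models in $\K_{\ge \lambda^+}$, and then descends to $\lambda$ by applying Theorem \ref{gv-weak-ap}, whose hypotheses---arbitrarily large models, amalgamation in $\lambda$, $\lambda$-tameness, weak amalgamation, and categoricity in $\lambda$ and $\lambda^+$---are by then all met. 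This second route explains why the theorem assumes $\lambda^+$-tameness rather than merely $\lambda$-tameness.
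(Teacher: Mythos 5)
Your overall strategy matches the paper's: use Shelah's weak-diamond/few-models machinery to produce a good $\lambda$-frame, transfer it upward via Fact \ref{frame-upward} to get amalgamation and arbitrarily large models, and finish with the Grossberg--VanDieren transfer. However, there is one genuine error that propagates through both of your routes: you assert that ``$\K$ is $\lambda^+$-tame, hence $\lambda$-tame.'' Under the paper's conventions this implication goes the wrong way. Being $\chi$-tame means types are determined by restrictions to sets of size $\le \chi$, so a \emph{smaller} parameter is a \emph{stronger} hypothesis: $\lambda$-tameness implies $\lambda^+$-tameness, not conversely. Since Fact \ref{frame-upward} applied to a good $\lambda$-frame requires $\lambda$-tameness, and both Fact \ref{gv-upward} and Theorem \ref{gv-weak-ap} (your fallback) require $\LS(\K)$-tameness, i.e.\ $\lambda$-tameness, your argument as written does not close: you never actually have $\lambda$-tameness in hand. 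Your second route does not escape this either, since after transferring a good $\lambda^+$-frame you still invoke Theorem \ref{gv-weak-ap} at level $\lambda$ and simply list $\lambda$-tameness among hypotheses ``by then all met'' without justification.

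The missing ingredient, which the paper's proof supplies explicitly, is that Shelah's construction of the good $\lambda$-frame yields as a byproduct that $\K_{\le \lambda^+}$ is $\lambda$-tame (see the citation to \cite[7.1]{tame-succ-ijm}). Combined with the assumed $\lambda^+$-tameness of all of $\K$, this gives full $\lambda$-tameness: distinct types over any model already differ over a set of size $\le \lambda^+$, hence over a model in $\K_{\lambda^+}$, hence over a set of size $\le \lambda$. With that in place the rest of your first route is fine (and your direct appeal to Fact \ref{gv-upward} after the frame transfer is a perfectly good, slightly more streamlined alternative to the paper's invocation of Corollary \ref{gv-weak-ap-2}). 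Note also that your closing speculation about why the theorem assumes $\lambda^+$-tameness ``rather than merely $\lambda$-tameness'' is inverted for the same reason: $\lambda$-tameness would be the \emph{stronger} assumption, and the theorem gets away with the weaker $\lambda^+$-tameness precisely because Shelah's machinery manufactures the needed tameness below $\lambda^{++}$.
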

\begin{proof}
  By results of Shelah on building good frames, there is a good $\lambda$-frame on $\K$ and $\K_{\le \lambda^+}$ is $\lambda$-tame (see \cite[7.1]{tame-succ-ijm} for an outline of the proof). By Fact \ref{frame-upward}, $\K$ has arbitrarily large models. Now apply Corollary \ref{gv-weak-ap-2}.
\end{proof}

\bibliographystyle{amsalpha}
\bibliography{existence-categ}

\end{document}